\title{On Delaunay Ends in the DPW Method}
\author{Thomas Raujouan}
\newtheorem{proposition}{Proposition}
\newtheorem{theorem}{Theorem}
\newtheorem{lemma}{Lemma}
\newtheorem{definition}{Definition}
\newtheorem{corollary}{Corollary}
\newtheorem{remark}{Remark}
\newtheorem{hypotheses}{Hypotheses}
\newcommand{\la}{\lambda}
\newcommand{\Oo}{\mathcal{O}}
\newcommand{\Cc}{\mathbb{C}}
\newcommand{\Q}{\mathcal{Q}}
\newcommand{\Sym}{\mathrm{Sym}}
\newcommand{\D}{\mathcal{D}}
\newcommand{\Rr}{\mathbb{R}}
\newcommand{\Aa}{\mathcal{A}}
\newcommand{\I}{\mathrm{I}_2}
\newcommand{\Ss}{\mathbb{S}}
\newcommand{\Uni}{\mathrm{Uni}}
\newcommand{\R}{\mathcal{R}}
\newcommand{\Dd}{\mathbb{D}}
\newcommand{\M}{\mathcal{M}}
\newcommand{\Tub}{\mathrm{Tub}}
\newcommand{\N}{\mathcal{N}}
\newcommand{\Pos}{\mathrm{Pos}}
\newcommand{\Ll}{\mathcal{L}}
\newcommand{\Nn}{\mathbb{N}}
\newcommand{\SL}{\mathrm{SL}}
\newcommand{\SU}{\mathrm{SU}}
\newcommand{\su}{\mathfrak{su}}
\newcommand{\Res}{\mathrm{Res}}
\newcommand{\C}{\mathcal{C}}
\newcommand{\norm}[1]{\left\Vert#1\right\Vert}
\newcommand{\normR}[1]{\left\Vert#1\right\Vert_{\Rr^3}}
\newcommand{\conj}[1]{\overline{#1}}
\def\Im{{\rm Im}}
\newcommand{\E}{\mathcal{E}}
\newcommand{\Hh}{\mathbb{H}}
\newcommand{\LSL}{\Lambda  \mathrm{SL}_2\Cc}
\newcommand{\LrSL}[1]{\LSL}
\newcommand{\LSU}{\Lambda \mathrm{SU}_2}
\newcommand{\Lsu}{\Lambda \mathfrak{su}_2}
\newcommand{\Lslplus}{\Lambda_{+} \mathfrak{sl}_2\Cc}
\newcommand{\LrSLplus}[1]{\Lambda_+ \mathrm{SL}_2\Cc}
\newcommand{\LrSLplusR}[1]{\Lambda_+^\Rr \mathrm{SL}_2\Cc}
\newcommand{\Lrslplus}[1]{\Lambda_{+}\mathfrak{sl}_2\Cc}
\newcommand{\LrslplusR}[1]{\Lambda_+^{\Rr} \mathfrak{sl}_2\Cc}
\newcommand{\LrSU}[1]{\LSU}
\newcommand{\Lrsu}[1]{\Lambda \mathfrak{su}_2}
\newcommand{\Lrsl}[1]{\Lambda \mathfrak{sl}_2\Cc}
\newcommand{\LSLplusR}{\Lambda_+^\Rr \mathrm{SL}_2\Cc}
\newcommand{\function}[5]{
	\begin{array}{ccccc}
		#1 & : & #2 & \longrightarrow & #3 \\
		& & #4 & \longmapsto & #5 \\
	\end{array}
	}
\newcommand{\supp}[1]{\underset{#1}{\sup}}
\newcommand{\tendsto}[1]{\underset{#1}{\longrightarrow}}
\begin{document}

\maketitle

\begin{abstract}
	We consider constant mean curvature 1 surfaces in $\Rr^3$ arising via the DPW method from a holomorphic perturbation of the standard Delaunay potential on the punctured disk. Kilian, Rossman and Schmitt have proven that such a surface is asymptotic to a Delaunay surface. We consider families of such potentials parametrised by the necksize of the model Delaunay surface and prove the existence of a uniform disk on which the surfaces are close to the model Delaunay surface and are embedded in the unduloid case.
\end{abstract}

\section*{Introduction}

Beside the sphere, the simplest non-zero constant mean curvature (CMC) surface is the cylinder, which belongs to a one-parameter family of surfaces generated by the revolution of an elliptic function: the Delaunay surfaces, first described in \cite{delaunay}. Like the cylinder, Delaunay surfaces have two annular type ends, and Delaunay ends are the only possible embedded annular ends for a non-zero CMC surface. Indeed, as proven in \cite{korevaar} by Korevaar, Kusner and Solomon, if $\M\subset\Rr^3$ is a proper, embedded, non-zero CMC surface of finite topological type, then every annular end of $\M$ is asymptotic to a Delaunay surface and if $\M$ has exactly two ends which are of annular type, then $\M$ is a Delaunay surface. Thus, the status of Delaunay surfaces for non-zero CMC surfaces is very much alike the catenoid position in the study of minimal surfaces (see the result of Schoen in \cite{schoen}), and one has to understand the behaviour of Delaunay ends in order to construct examples of non-compact CMC surfaces with annular ends, as Kapouleas did in 1990 \cite{kapouleas}.

For an immersion, having a constant mean curvature and having a harmonic Gauss map are equivalent. This is why the Weierstrass type representation of Dorfmeister, Pedit and Wu \cite{dpw} has been used since the publication of their article to construct CMC surfaces. The DPW method can construct any conformal non-zero CMC immersion in $\Rr^3$, $\Hh^3$ or $\Ss^3$ with three ingredients: a holomorphic potential which takes its values in a loop group, a loop group factorisation, and a Sym-Bobenko formula. Several examples of CMC surfaces with annular ends, like $n$-noids and bubbletons, have been made by Dorfmeister, Wu, Killian, Kobayashi, McIntosh, Rossmann, Schmitt and Sterling \cite{dw,kkrs,kilian,kms,kobayashi,schmitt}. These constructions often rely on a holomorphic perturbation of the holomorphic potential giving rise to a Delaunay surface via the DPW method, and Kilian, Rossmann and Schmitt \cite{krs} have proven that such perturbations always induce asymptotically a Delaunay end.

More precisely, any Delaunay embedding can be obtained with a holomorphic potential of the form $\xi^\D=Az^{-1}dz$ where
\begin{equation*}
	A=\begin{pmatrix}
	0 & r\la^{-1} +s\\ r\la + s & 0
	\end{pmatrix}.
\end{equation*}
The main result of \cite{krs} states that any immersion obtained from a perturbed potential of the form ${\xi}=\xi^\D + \Oo(z^0)$ is asymptotic to an embedded half-Delaunay surface in a neighbourhood of $z=0$, provided that the monodromy problem is solved. In this paper, we allow the perturbed potential to move in the family of Delaunay potentials by introducing a real parameter $t$, proportional to the weight of the model Delaunay surface,  and consider $\xi_t=\xi_t^\D + \Oo_t(z^0)$ where $\xi_t^\D$ is a Delaunay potential of weight $8\pi t$. The main theorem of \cite{krs} states that for every $t>0$, there exists a small neighbourhood of the origin on which the surface produced by the potential $\xi_t$ is embedded and asymptotic to a half Delaunay surface. Unfortunately, without further hypotheses, this neighbourhood vanishes into a single point as $t$ tends to zero. Adding a few assumptions, we prove here that there exists a uniform neighbourhood of the origin upon which the surfaces induced by the family $\xi_t$ are all embedded and asymptotic to a half Delaunay surface for $t>0$ small enough.

Hence, the point of our paper is not to show that the ends of the perturbed unduloid family are embedded (which is what \cite{krs} does), but that all the immersions of this family are embedded on a uniform punctured disk. Equipped with our result, Martin Traizet (in \cite{nnoids} and \cite{minoids}) showed for the first time how the DPW method can be used to both construct CMC $n$-noids without symmetries and prove that they are Alexandrov embedded. 

The theorem we prove is the following one (definitions and notations are clarified in Section \ref{sectionDPW}):

\begin{theorem}
	\label{theoremsimplifie}
	Let $\Phi_t$ be a holomorphic frame arising from a perturbed Delaunay potential $\xi_t$ defined on a punctured neighbourhood of $z=0$. Suppose that $\Phi_0(1,\la) = \I$ and that the monodromy of $\Phi_t$ is unitary. Then, if $f_t$ denotes the immersion obtained via the DPW method,
	\begin{itemize}
		\item There exists a family $f_t^\D$ of Delaunay immersions such that for all $\alpha<1$ and $|t|$ small enough,
		\begin{equation*}
			\Vert f_t(z) - f_t^{\D}(z)\Vert _{\Rr^3} \leq C_\alpha |t||z|^{\alpha}
		\end{equation*}
		on a uniform neighbourhood of $z=0$.
		\item If $t>0$ is small enough, then $f_t$ is an embedding of a uniform neighbourhood of $z=0$.
		\item The limit axis of $f_t^\D$ as $t$ tends to $0$ can be made explicit.
	\end{itemize}
\end{theorem}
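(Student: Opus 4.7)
My plan is to compare $f_t$ at each layer of the DPW machinery with a Delaunay immersion $f_t^\D$ built from the pure Delaunay potential $\xi_t^\D$ sharing the same frame value at $z=1$. Concretely, let $\Phi_t^\D$ solve the ODE with potential $\xi_t^\D$ and initial condition $\Phi_t^\D(1,\la)=\Phi_t(1,\la)$, and obtain $f_t^\D$ by Iwasawa-factorising $\Phi_t^\D$ and applying the Sym-Bobenko formula. The hypothesis $\Phi_0(1,\la)=\I$ ensures that both $\Phi_t$ and $\Phi_t^\D$ remain close to $\I$ at $z=1$ for small $t$, keeping them within the Iwasawa big cell and enabling uniform estimates.

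\textbf{Frame comparison.} Writing $\Phi_t=\Phi_t^\D\cdot H_t$, the correction $H_t$ satisfies
\begin{equation*}
	dH_t = H_t \cdot (\Phi_t^\D)^{-1}(\xi_t - \xi_t^\D)\Phi_t^\D, \qquad H_t(1,\la)=\I,
\end{equation*}
where $\xi_t-\xi_t^\D=\Oo(t)$ is holomorphic at $z=0$. The regular singular point at $z=0$ gives an explicit expression $\Phi_t^\D(z,\la)=C_t(\la)z^{A_t(\la)}$ with the uniform bound $\norm{\Phi_t^\D(z,\la)^{\pm 1}}\leq C_\varepsilon|z|^{-\varepsilon}$ for any $\varepsilon>0$. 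Integrating the $H_t$-equation along a path from $1$ to $z$ then produces $\norm{H_t(z,\la)-\I}\leq C_\alpha|t||z|^\alpha$ for any $\alpha<1$; the loss from $1$ to $\alpha$ absorbs the double occurrence of $z^{\pm A_t}$ in the conjugated perturbation.

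\textbf{Transfer to the surface and embeddedness.} The Iwasawa factorizations $\Phi_t=F_tB_t$ and $\Phi_t^\D=F_t^\D B_t^\D$ are well-defined near $z=1$ by the previous paragraph; to extend them to a uniform disk $\Dd_\rho$, I use the flatness of $F_t^{-1}dF_t$ together with the explicit Iwasawa factorization of $z^{A_t(\la)}$ on $|\la|=1$. Local Lipschitz continuity of Iwasawa transfers the estimate to $\norm{F_t-F_t^\D}\leq C_\alpha|t||z|^\alpha$, and the Sym-Bobenko formula (Lipschitz from $\LSU$ to $\Rr^3$) delivers $\normR{f_t-f_t^\D}\leq C_\alpha|t||z|^\alpha$. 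Embeddedness of $f_t$ on $\Dd_\rho$ follows by a tubular neighborhood argument: since $f_t^\D$ is an embedded half-Delaunay surface whose tubular radius is bounded below by a power of $|z|$ comparable to the error, and since $\alpha<1$ makes the error $|t||z|^\alpha$ negligible relative to this radius, $f_t$ remains in the tube of $f_t^\D$ and inherits embeddedness. Finally, the axis of $f_t^\D$ is extracted from the unitarized monodromy of $\Phi_t^\D$ at $\la=1$, and its limit as $t\to 0$ is computed from the leading behaviour of $A_t$ together with the normalization $\Phi_0(1,\la)=\I$.

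\textbf{Main obstacle.} The central difficulty is uniformity as $t\to 0$: the result of \cite{krs} applies at fixed $t$ but its working disk collapses in the limit. Preventing this collapse requires exploiting both the explicit integrability of the Delaunay ODE, to make the frame estimate uniform, and the normalization $\Phi_0(1,\la)=\I$, to keep the Iwasawa decomposition and the Sym-Bobenko formula controlled on a fixed punctured disk. Matching the tubular radius of $f_t^\D$, which itself shrinks with $t$, against the error $|t||z|^\alpha$ is the delicate point, and the precise exponent $\alpha<1$ is tuned to ensure the error loses against the radius.
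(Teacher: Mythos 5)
Your frame-level comparison breaks down at the two points where the paper has to work hardest. First, the bound $\norm{\Phi_t^\D(z,\la)^{\pm1}}\leq C_\varepsilon|z|^{-\varepsilon}$ is false: $z^{A_t}$ has eigenvalues $z^{\pm\mu_t}$ with $\mu_t(\la)^2=\tfrac14+t\la^{-1}(\la-1)^2$, so the Delaunay frame grows like $|z|^{-1/2}$ (and slightly worse off the unit circle). Conjugating the perturbation $\xi_t-\xi_t^\D=\Oo(t,z^0)$ by $z^{\pm A_t}$ therefore costs essentially a full power $|z|^{-1}$, the resulting integrand is not integrable at $z=0$, and Gronwall along a path from $1$ to $z$ does not give $H_t-\I=\Oo(t,|z|^\alpha)$; the ``loss from $1$ to $\alpha$'' cannot absorb it. This is the resonance phenomenon: in the Frobenius form $\Phi_t=M_tz^{A_t}(\I+P_{t,1}z+\cdots)$ the operator $\Ll_{t,1}=[A_t,\cdot]+\id$ has $\det\Ll_{t,1}=1-4\mu_t^2=\Oo(t)$, so $P_{t,1}=\Ll_{t,1}^{-1}(tC_t)$ is only $\Oo(1)$, not $\Oo(t)$. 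The paper removes this obstruction by first proving that a specific combination $p_t$ of the entries of $C_t$ vanishes at $t=0$ (Lemma \ref{lemmep0}) --- and this is where the unitarity of the monodromy together with $\Phi_0(1,\la)=\I$ is really used, via the derivative-of-monodromy formula --- and then gauging by $\exp(g_t z)$ and changing coordinates by a M\"obius map to reach $\widetilde M_t z^{A_t}(\I+\Oo(t,z^2))$. In your proposal these two hypotheses only ``keep things in the big cell'', which is not their role, and nothing replaces this step.

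Second, the passage to the immersion is not a Lipschitz-continuity-of-Iwasawa statement applied to the raw frame error: the object one must control is $B_t^\D(\Phi_t^\D)^{-1}\Phi_t(B_t^\D)^{-1}$, and the positive factor satisfies $\norm{B_t^\D}\leq C|z|^{\alpha/2-1}$, so conjugation amplifies the error by $|z|^{\alpha-2}$. One therefore needs the frame error to be $\Oo(t,z^2)$ to end with $|t||z|^\alpha$; with what is attainable without the gauge step the estimate degrades (as the paper's outline notes, to the order of $|t|^{-1}|z|^\alpha$), which ruins both the uniform convergence and the embeddedness. Moreover the tubular radius of $f_t^\D$ is of order $t$, not ``a power of $|z|$'', so the whole point of the factor $|t|$ in the error is to make the ratio error/radius equal to $|z|^\alpha$, uniformly in $t$. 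Finally, the embeddedness argument also requires the convergence of the normal maps (to show the projection onto the model surface is a local diffeomorphism, then a covering of degree one) and a separate treatment of the catenoidal case $r\leq s$ via the blow-up $\tfrac1t f_t$; your sketch addresses neither.
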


An outline of the proof is given in Section \ref{sectionoutline}, together with an explanation of why the convergence of $t$ to $0$ forbids us from using several key results of \cite{krs}.

\section{The DPW method}
\label{sectionDPW}

\subsection{Loop groups}
\label{sectionloopgroups}

Our maps will often depend on a spectral parameter $\la$ that can be in one of the following subsets of $\Cc$ ($R> 1$):
\begin{equation*}
	\begin{array}{ll}
	\D_R = \left\{ \la\in\Cc, \ |\la|<R \right\}, & \Aa_R = \left\{ \la\in \Cc, \ \frac{1}{R}<|\la|<R \right\} ,\\
	\D_1 = \left\{ \la\in\Cc, \ |\la|<1 \right\}, & \Aa_1 = \left\{ \la\in \Cc, \ |\la|=1 \right\} .
	\end{array}
\end{equation*}
For the coordinate $z$, we will note ($\epsilon >0$):
\begin{equation*}
	\begin{array}{ll}
	\Dd_\epsilon = \left\{ z\in\Cc, \ |z|<\epsilon \right\}, & \Ss_\epsilon = \left\{ z\in\Cc, \ |z|=\epsilon \right\}.
	\end{array}
\end{equation*}

Let us define the following (untwisted) loop groups and algebras:
\begin{itemize}
	\item $\LrSL{R}$ is the set of smooth maps $\Phi : \Aa_1\longrightarrow \SL_2\Cc$.
	\item $\LrSU{R}\subset \LSL$ is the set of maps $F\in \LSL$ such that $F(\la)\in\SU_2$ for all $\la\in\Aa_1$.
	\item $\LrSLplus{R}\subset\LrSL{R}$ is the set of maps $G\in\LrSL{R}$ that can be holomorphically extended to $\D_1$ and such that $G(0)$ is upper triangular.
	\item $\LrSLplusR{R} \subset\LrSLplus{R}$ is the set of maps $B\in\LrSLplus{R}$ such that $B(0)$ has positive elements on the diagonal.
	\item $\Lrsl{R}$ is the set of smooth maps $A : \Aa_1 \longrightarrow \mathfrak{sl}_2\Cc$.
	\item $\Lrsu{R}$ is the set of maps $m\in\Lrsl{R}$ such that $m(\la)\in\su_2$ for all $\la\in\Aa_1$.
	\item $\Lslplus\subset\Lrsl{R}$ is the set of maps $g\in \Lrsl{R}$ that can be holomorphically extended to $\D_1$ and such that $g(0)$ is upper triangular.
	\item $\LrslplusR{R} \subset \Lrslplus{R} $ is the set of maps $b\in \Lrslplus{R}$ such that $b(0)$ has real elements on the diagonal.
\end{itemize}
We also use the following notation:
\begin{equation*}
	\Oo(t^\alpha,z^\beta,\la^\gamma)=t^\alpha z^\beta \la^\gamma f(t,z,\la)
\end{equation*}
where $f$, on its domain of definition, is continuous with respect to $(t,z,\la)$ and holomorphic with respect to $(z,\la)$ for any $t$.
If one variable is not specified, its exponent is assumed to be $0$.

One step of the DPW method relies on the following Iwasawa decomposition (Theorem 8.1.1. in \cite{pressleysegal}): 

\begin{theorem}[Iwasawa decomposition]
	Any element $\Phi\in\LSL$ can be uniquely factorised into a product
	\begin{equation*}
		\Phi = F\times B
	\end{equation*}
	where $F\in\LSU$ and $B\in\LSLplusR$. Moreover, the map $\LSL \longrightarrow \LSU \times \LSLplusR$ is a $\C^\infty$ diffeomorphism for the intersection of the $\C^k$ topologies (see \cite{krs}).
\end{theorem}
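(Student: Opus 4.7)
The plan is to prove uniqueness by a Schwarz-reflection/Liouville argument, establish existence by an orthogonalization in the Hardy decomposition of $L^2(\Aa_1, \Cc^2)$, and deduce the smooth structure from an inverse function theorem at the Lie algebra level.

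For uniqueness, suppose $\Phi = F_1 B_1 = F_2 B_2$ with $F_i \in \LSU$ and $B_i \in \LSLplusR$. Then $G := F_2^{-1} F_1 = B_2 B_1^{-1}$ lies in $\LSU \cap \LSLplusR$: it extends holomorphically to $\D_1$ and satisfies $G(\la)^{-1} = G(\la)^*$ on $\Aa_1$. Schwarz reflection extends $G$ to $\{|\la|>1\}$ via $G(\la) := (G(\bar\la^{-1})^*)^{-1}$, and the limit at infinity, equal to $(G(0)^*)^{-1}$, is finite, so $G$ extends holomorphically to $\mathbb{CP}^1$. Each matrix entry is then a holomorphic function on the Riemann sphere, hence constant, and the only constant matrix which is simultaneously in $\SU_2$ and upper triangular with positive real diagonal is $\I$. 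Thus $F_1 = F_2$ and $B_1 = B_2$.

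For existence, one views a loop $\Phi$ as a multiplication operator on $\mathcal{H} := L^2(\Aa_1, \Cc^2) = H^+ \oplus H^-$, where $H^+$ is the Hardy space of $\Cc^2$-valued functions extending holomorphically to $\D_1$. The closed subspace $W := \Phi \cdot H^+$ defines a point of the restricted Grassmannian, and since $\det \Phi \equiv 1$ the orthogonal projection $W \to H^+$ is Fredholm of index zero. Applying $L^2$-orthonormalization to an adapted basis of $W$, and fixing the sign ambiguity by requiring positive real diagonal at $\la = 0$, produces a unique loop $F \in \LSU$ with $W = F \cdot H^+$. Then $B := F^{-1}\Phi$ preserves $H^+$, hence extends holomorphically to $\D_1$, and the normalization makes $B(0)$ upper triangular with positive real diagonal, placing $B \in \LSLplusR$. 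This step is the main obstacle: a naive pointwise QR decomposition on $\Aa_1$ gives an upper triangular factor that will not in general extend holomorphically to $\D_1$; enforcing the extension is what requires the Grassmannian framework, and controlling regularity so that $F$ lies in the smooth loop group rather than an $L^2$ completion is the delicate analytic point.

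For the diffeomorphism, the key input is the Lie algebra splitting $\Lsl = \Lsu \oplus \LrslplusR{R}$, which follows from Fourier expansion combined with the reality condition defining $\Lsu$. The derivative at $(\I, \I)$ of the multiplication $\LSU \times \LSLplusR \to \LSL$ corresponds to this direct sum decomposition, hence is an isomorphism; left-right equivariance propagates invertibility to every point. The inverse function theorem in the Banach approximations $C^k$ and a standard passage to the intersection $C^\infty$ topology (as detailed in \cite{krs}) upgrade the continuous bijection to a smooth diffeomorphism.
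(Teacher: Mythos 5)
The paper does not prove this statement; it simply cites Theorem 8.1.1 of Pressley--Segal \cite{pressleysegal} (together with \cite{krs} for the $\C^\infty$-diffeomorphism refinement), so there is no in-paper proof to compare against. Your sketch reconstructs the cited argument along the standard lines, and as an outline it is faithful: the Schwarz-reflection/Liouville argument for uniqueness is complete and correct; the Lie-algebra splitting $\Lsl = \Lsu \oplus \LrslplusR{R}$ does hold (one checks it Fourier mode by mode, using for $n<0$ that $u_n=X_n$, for $n>0$ that $u_n=-X_{-n}^*$, and for $n=0$ the finite-dimensional Iwasawa decomposition of $\mathfrak{sl}_2\Cc$ into $\su_2$ plus upper-triangular traceless with real diagonal), and this is indeed the derivative at the identity of the multiplication map, so the inverse-function-theorem step is sound modulo the usual passage from $\C^k$ to the Fr\'echet $\C^\infty$ topology.

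The genuine gap is where you yourself flag it. "Applying $L^2$-orthonormalization to an adapted basis of $W$" and then asserting that the result is a loop $F\in\LSU$ assumes precisely the hardest part of the theorem: that the \emph{smooth} unitary loop group acts transitively on the smooth Grassmannian, i.e.\ that the $W=\Phi\cdot H^+$ arising from a smooth $\Phi$ can be written as $F\cdot H^+$ with $F$ a $\C^\infty$ loop rather than merely an $L^2$ or $H^{1/2}$ object. That regularity propagation is the analytic content of Pressley--Segal's Chapter 8 and of the $\C^k$ estimates in \cite{krs}, and your text acknowledges it as "the delicate analytic point" without closing it. A smaller imprecision: the ambiguity in $F$ is not a sign ambiguity but a right $\mathrm{U}_2$-ambiguity (the isotropy of $H^+$), and it is resolved by applying the finite-dimensional QR decomposition to $B(0)=F^{-1}(0)\Phi(0)$ so as to make its diagonal positive. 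In short, your proposal is a correct and well-organised account of the structure of the cited proof, but it is not self-contained; the existence step is a pointer to the literature, which is also what the paper does.
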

The Iwasawa decomposition of a map $\Phi$ will often be written:
\begin{equation*}
	\Phi = \Uni\left(\Phi\right) \times \Pos\left(\Phi\right),
\end{equation*}
where $\Uni\left(\Phi\right)$ is called ``the unitary factor'' of $\Phi$ and $\Pos\left(\Phi\right)$ is ``the positive factor'' of $\Phi$. Using Corollary \ref{IwasawaExtended} of Appendix \ref{SectionIwaExtended}, note that if $\Phi$ is holomorphic on $\Aa_R$, then its unitary factor holomorphically extends to $\Aa_R$ and its positive factor holomorphically extends to $\D_R$.

\subsection{The $\mathfrak{su}_2$ model of $\Rr^3$}\label{su2model}

In the DPW method, immersions are given in a matrix model. The euclidean space $\Rr^3$ is thus identified with the Lie algebra $\mathfrak{su}_2$ by
\begin{equation*}
	x=(x_1,x_2,x_3) \simeq X=\frac{-i}{2}\begin{pmatrix}
	-x_3&x_1+ix_2\\x_1-ix_2&x_3
	\end{pmatrix}.
\end{equation*}
The canonical basis of $\Rr^3$ identified as $\su_2$ is denoted $\left(e_1,e_2,e_3\right)$.
In this model, the euclidean norm is given by
\begin{equation}\label{eqnormR3}
	\norm{x}^2 = 4 \det(X).
\end{equation}
Linear isometries are represented by the conjugacy action of $\SU_2$ on $\mathfrak{su}_2$:
\begin{equation*}
	H\cdot X = HXH^{-1}.
\end{equation*}

\subsection{The recipe}\label{recipe}

The DPW method takes for input data:
\begin{itemize}
	\item A Riemann surface $\Sigma$;
	\item A $\Lrsl{R}$-valued holomorphic 1-form $\xi=\xi(z,\la)$ on $\Sigma$ called ``the DPW potential'' which extends meromorphically to $\D_1$ with a pole only at $\la=0$, and which must be of the form
	\begin{equation*}
		\xi(z,\la) = \sum_{j=-1}^{\infty} \xi_j(z)\la^j
	\end{equation*}
	where each matrix $\xi_j(z)$ depends holomorphically on $z$ and all the entries of $\xi_{-1}(z)$ are zero except for the upper right entry which must never vanish;
	\item A base point $z_0\in\Sigma$;
	\item An initial condition $\Phi_{z_0}\in\LrSL{R}$.
\end{itemize}
Given such data, here are the three steps of the DPW method for constructing CMC-1 surfaces in $\Rr^3$ (in the untwisted setting):
\begin{enumerate}
	\item Solve for $\Phi$ the Cauchy problem with parameter $\la\in\Aa_1$:
	\begin{align*}
	\left\{
	\begin{array}{rcl}
	d_z\Phi(z,\la) &=& \Phi(z,\la)\xi(z,\la),\\
	\Phi(z_0,\la) &=& \Phi_{z_0}(\la).
	\end{array}
	\right.
	\end{align*}
	The solution $\Phi(z,\cdot)\in\LrSL{R}$ is called the ``holomorphic frame'' of the surface.
	In general, $\Phi(\cdot,\la)$ is only defined on the universal cover $\widetilde{\Sigma}$ of $\Sigma$ (see Section \ref{sectionmonod}). Note that if $\xi(z,\cdot)$ can be holomorphically extended to $\Aa_{R}$ ($R>1$), then $\Phi(z,\cdot)$ can also be holomorphically extended to $\Aa_{R}$ provided that $\Phi_{z_0}$ is holomorphic on $\Aa_R$.
	\item For all $z\in\widetilde{\Sigma}$, Iwasawa decompose $\Phi(z,\la) = F(z,\la) B(z,\la)$. The decomposition is done pointwise in $z$, but $F(z,\la)$ and $B(z,\la)$ depend real-analytically on $z$. The map $F$ is called the ``unitary frame'' of the surface.
	\item Define $f:\widetilde{\Sigma} \longrightarrow \su_2$ by the Sym-Bobenko formula:
	\begin{equation*}
		f(z)= \Sym(F) = i\frac{\partial F}{\partial \la}(z,1) F(z,1)^{-1} .
	\end{equation*}
	The map $f$ is then a conformal CMC-1 immersion whose normal map is given by
	\begin{equation}
	\label{defN}
	\N(z) = \frac{-i}{2}F(z,1)\begin{pmatrix}
	1&0\\0&-1
	\end{pmatrix}F(z,1)^{-1}.
	\end{equation}
	Its metric and Hopf differential are
	\begin{equation*}
		ds= 2\rho^2|\xi_{-1}^{12}||dz|,
	\end{equation*}
	\begin{equation*}
		Q=-2\xi_{-1}^{12}\xi_{0}^{21}dz^2
	\end{equation*}
	where $\xi_j^{kl}$ is the $(k,l)$-entry of the matrix $\xi_j(z)$ and $\rho$ is the upper-left entry of $B(z,0)$.
\end{enumerate}

The theory states that every conformal CMC-1 immersion can be obtained this way.

\subsection{Rigid motions of the surface}

Let $\xi$ be a DPW potential and $\Phi\in\LrSL{R}$ a solution of $d\Phi=\Phi\xi$. Take a loop $H\in\LrSU{R}$ that does not depend on $z$. Then $\widetilde{\Phi}=H\Phi$ also satisfies $d\widetilde{\Phi} = \widetilde{\Phi}\xi$ and gives rise to a rigid motion of the original surface given by $\Phi$. Let $f=\Sym \circ \Uni(\Phi)$ and $\widetilde{f} = \Sym\circ \Uni (\widetilde{\Phi})$. Then,
\begin{equation*}
	\widetilde{f}(z) = H(1)\cdot f(z) + \Sym(H).
\end{equation*}
This enjoins us to extend the action of section \ref{su2model} to affine isometries by
\begin{equation*}
H(\la)\cdot X = H(1)XH(1)^{-1} + i\frac{\partial H}{\partial \la}(1)H(1)^{-1}.
\end{equation*}
Note that $\LSU$ also acts on the tangent bundle of $\Rr^3$ via:
\begin{equation}
\label{actiontangent}
H\cdot \left(p,\vec{v}\right) = \left( H\cdot p, H(1)\cdot \vec{v} \right).
\end{equation}
This action will be useful to follow the axis of our surfaces: oriented affine lines are generated by pairs $(p,\vec{v})$ and the action of $\LSU$ on a given oriented affine line corresponds to the action \eqref{actiontangent} on its generators.

\subsection{Gauging}

Let $(\Sigma,\xi,z_0,\Phi_{z_0})$ be a set of DPW data with $d\Phi=\Phi\xi$. Let $G(z,\la)$ be a holomorphic map with respect to $z\in\Sigma$ such that $G(z,\cdot)\in\LrSLplus{R}$ (such a map is called an ``admissible gauge''). If we define $\widetilde{\Phi}=\Phi G$, then $\Phi$ and $\widetilde{\Phi}$ give rise to the same immersion $f$. This operation is called ``gauging'' and one can retrieve $\widetilde{\Phi}$ by applying the DPW method to the data $\left(\Sigma, \xi \cdot G,z_0,\Phi_{z_0}G(z_0,\cdot)\right)$ where
\begin{equation*}
	\xi \cdot G = G^{-1}\xi G + G^{-1}dG
\end{equation*}
is the action of gauges on potentials.

\subsection{The monodromy problem}\label{sectionmonod}

Since $\Phi$ is defined as the solution of a Cauchy problem on $\Sigma$, it is only defined on the universal cover $\widetilde{\Sigma}$ of $\Sigma$. For any deck transformation $\tau$ of $\widetilde{\Sigma}$, we define the monodromy matrix $\M_\tau\left(\Phi\right)\in\LrSL{R}$ as follow:
\begin{equation*}
	\Phi(\tau(z),\la) = \M_\tau(\Phi)(\la)\Phi(z,\la).
\end{equation*}
Note that $\M_\tau(\Phi)$ does not depend on $z$. The standard sufficient condition for the immersion $f$ to be be well-defined on $\Sigma$ is the following set of equations, called the monodromy problem in $\Rr^3$:
\begin{equation*}
\label{monodromyproblem}
\left\{
\begin{array}{rclc}
\M_\tau(\Phi)&\in&\LrSU{R},& (i)\\
\M_\tau(\Phi)(1) &=& \pm\I, & (ii)\\
\frac{\partial}{\partial \la}\M_\tau(\Phi)(1) &=& 0. &(iii)
\end{array}
\right.
\end{equation*}

\begin{remark}
	In this paper, the Riemann surface $\Sigma$ will always be a punctured neighbourhood $\Dd_\epsilon^*$ of $z=0$. Thus, all the deck transformations $\tau$ will be associated to a closed loop around $z=0$ and we will write $\M(\Phi)$ instead of $\M_\tau(\Phi)$.
\end{remark}

\begin{remark}
	\label{remarkmonodromy}
	Let $\Phi : \Cc^* \longrightarrow \LrSL{R}$ such that $\M\left(\Phi\right) \in \LrSU{R}$. Let $\widetilde{\Phi} = H \left(h^*\Phi\right) \cdot G$ where $H\in\LSL$, $G$ is holomorphic at $z=0$ and $h$ is a M\"obius transformation that leaves $z=0$ invariant. Then 
	\begin{equation*}
		\M(\widetilde{\Phi}) = H\M\left(\Phi\right)H^{-1}.
	\end{equation*}
	Thus, if the monodromy problem for $\Phi$ is solved, a sufficient condition for the monodromy problem for $\widetilde{\Phi}$ to be solved is that $H\in\LSU$.
\end{remark}

\subsection{The Delaunay family}\label{sectionExempleDelaunay}

Delaunay surfaces come in a one-parameter family: for all $t\in\left(-\infty, \frac{1}{16}\right]\backslash\left\{0\right\}$, there exists a unique Delaunay surface, whose weight (as defined in \cite{loopgroups}) is $8\pi t$. The DPW method can retrieve these surfaces using the following data:
\begin{equation*}
\begin{array}{cccc}
\Sigma = \Cc^*, & \xi_t(z,\la) = A_t(\la)z^{-1}dz, & z_0 = 1, & \Phi_{z_0}=\I,
\end{array}
\end{equation*}
where 
\begin{equation*}
A_t(\la) = \begin{pmatrix}
0 & r\la^{-1}+s\\
r\la+s & 0
\end{pmatrix}
\end{equation*}
and $r,s$ are functions of $t\in \left(-\infty, \frac{1}{16}\right]$ satisfying
\begin{equation}\label{conditionsrs}
\left\{ 
\begin{array}{c}
r,s\in\Rr,\\
r+s = \frac{1}{2},\\
rs = t.
\end{array}
\right.
\end{equation}
Note that the system \eqref{conditionsrs} admits two solutions, whether $r\geq s$ or $r\leq s$. For a fixed value of $t$, these two solutions give two different parametrisations of the same surface (up to a translation). If $r\geq s$, the unit circle of $\Cc^*$ is mapped onto a parallel circle of maximal radius: a bulge of the Delaunay surface. If $r\leq s$, the unit circle of $\Cc^*$ is mapped onto a parallel circle of minimal radius: a neck of the Delaunay surface. As $t$ tends to $0$ and in the case $r\geq s$, the immersions tend towards the parametrisation of a sphere on every compact subset of $\Cc^*$, which is why we call this setting the ``spherical case''. On the other hand, when $r\leq s$ and $t$ tends to $0$, the immersions degenerate into a point on every compact subset of $\Cc^*$. Nevertheless, we call this setting the ``catenoidal case'' because applying a blowup to the immersions makes them converge towards a catenoid on every compact subset of $\Cc^*$ (see \cite{minoids} for further details).

In any case, the corresponding holomorphic frame is explicit:
\begin{equation*}
\Phi_t(z,\la) = z^{A_t(\la)}
\end{equation*}
as is its monodromy around $z=0$:
\begin{equation}\label{eqmonodromy}
\M\left(\Phi_t\right)(\la) = \exp\left(2i\pi A_t(\la)\right)\\
= \cos\left( 2\pi\mu_t(\la) \right)\I + \frac{i\sin\left( 2\pi\mu_t(\la) \right)}{\mu_t(\la)}A_t(\la)	
\end{equation}
where
\begin{equation}\label{eqmut}
\mu_t(\la)^2 = -\det A_t(\la) = \frac{1}{4} + t \la^{-1}(\la-1)^2.
\end{equation}
Note that the conditions \eqref{conditionsrs} have been chosen in order for the monodromy problem of Section \ref{monodromyproblem} to be solved. The axis of the surface is given by $\left\{(x,0,-2r), \ x\in\Rr \right\}$ and its weight is $8\pi t$. Thus, the induced surface is an unduloid if $t>0$ and a nodoid if $t<0$.

\begin{remark}\label{remarkmubiendef}
	In order to deal with a single-valued square root of $\mu_t(\la)^2$ and to avoid some resonance cases in Section \ref{sectionzap}, we set $T>0$ and $R>1$ small enough for 
	\begin{equation*}
		\left\vert \mu_t(\la)^2 - \frac{1}{4} \right\vert < \frac{1}{4}
	\end{equation*}
	to hold for all $(t,\la)\in(-T,T)\times\Aa_R$.
\end{remark}

\subsection{Perturbed Delaunay DPW data}

We take a Delaunay potentials family as in section \ref{sectionExempleDelaunay} and we perturb it for $z$ in a small uniform neighbourhood of $0$:
\begin{definition}[Perturbed Delaunay potential]
	\label{defPerturbedDelaunay}
	Let $\epsilon>0$.
	A perturbed Delaunay potential is a one-parameter family $\{\xi_t\}_{t\in(-T,T)}$ of DPW potentials, holomorphic on $\Dd_\epsilon^*\times \Aa_R$ and of the form
	\begin{align*}
	\xi_t(z,\la) = A_t(\la)z^{-1}dz + R_t(z,\la)dz
	\end{align*}
	where $A_t$ is a Delaunay residue as in Section \ref{sectionExempleDelaunay}
	and $R_t(z,\la)\in\C^2$ with respect to $(t,z,\la)$, is holomorphic on $\Dd_\epsilon\times \Aa_R$ for all $t$ and satisfies $R_0(z,\la)=0$.
\end{definition}

The following set of hypotheses will be used to make sure that our holomorphic frames have a $\C^0$ regularity, are holomorphic with respect to $(z,\la)$, and solve the monodromy problem: 

\begin{hypotheses}\label{hypotheses}
	Let $\xi_t$ be a perturbed Delaunay potential. Let $\Phi_t$ be a holomorphic frame associated to it. We suppose that
	\begin{itemize}
		\item For some $t\in\left(-T,T\right)$ and $z\in \Dd_\epsilon^*$, $\Phi_t(z,\cdot)$ is holomorphic on $\Aa_R$,
		\item $\Phi_t(z,\la)$ is continous with respect to $(t,z,\la)$,
		\item The monodromy is unitary: $\M(\Phi_t)\in\LSU$.
	\end{itemize}
\end{hypotheses}

\begin{remark}
	When needed, one can replace $R>1$ by a smaller value in order for $\Phi_t$ to be holomorphic on $\Aa_R$ and continuous on $\overline{\Aa_R}$.
\end{remark}

The theorem we prove in this paper is the following:
\begin{theorem}
	\label{theorem}
	Let $\xi_t$ be a perturbed Delaunay potential and $\Phi_t$ a holomorphic frame associated to $\xi_t$ satisfying Hypotheses \ref{hypotheses} and such that $\Phi_0(1,\la) = \I$.
	Let $f_t=\Sym\left(\Uni(\Phi_t)\right)$. 
	Then,
	\begin{enumerate}
		\item For all $\alpha<1$ there exist constants $\epsilon>0$, $T>0$ and $C>0$ such that for all $0<|z|<\epsilon$ and $|t|<T$,
		\begin{align*}
		\Vert f_t(z) - f_t^{\D}(z)\Vert _{\Rr^3} \leq C|t||z|^{\alpha}
		\end{align*}
		where $f_t^\D$ is a Delaunay immersion of weight $8\pi t$.
		\item There exist $T'>0$ and $\epsilon'>0$ such that for all $0<t<T'$, $f_t$ is an embedding of $\left\{ 0<|z|<\epsilon' \right\}$.
		\item If $r\geq s$, the limit axis as $t$ tends to $0$ of $f_t^\D$ is the oriented line generated by $(-e_3,-\vec{e_1})$.
		
		If $r\leq s$, the limit axis as $t$ tends to $0$ of $f_t^\D$ is the oriented line generated by $(0,-\vec{e_1})$.
	\end{enumerate}
\end{theorem}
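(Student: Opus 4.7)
The plan is to compare $\Phi_t$ to the explicit Delaunay frame $z^{A_t(\la)}$ via a gauge reduction followed by a Grönwall-type ODE estimate, then to deduce the embeddedness and axis conclusions as corollaries. First, I would construct an admissible gauge $G_t(z,\la)=\I + \sum_{k=1}^{N} g_{t,k}(\la) z^k$ for $N$ large (chosen in terms of $\alpha$), holomorphic on a uniform neighbourhood $\Dd_\epsilon\times\Aa_R$ for $|t|<T$, such that $\xi_t\cdot G_t = \xi_t^\D + \Oo(t,z^N)dz$. The $g_{t,k}$ are obtained inductively from linear equations of the form $(kI + \ad_{A_t})g_{t,k} = (\text{terms involving } R_t \text{ and } g_{t,j} \text{ for } j<k)$, uniformly solvable in $t$ for $|t|<T$ because the spectrum $\{k, k\pm 2\mu_t\}$ of $kI+\ad_{A_t}$ stays bounded away from zero on $\Aa_R$ by Remark \ref{remarkmubiendef}, except at $(k,\la,t)=(1,1,0)$ where $1-2\mu_0(1)=0$; this unique resonance is handled by exploiting the kernel freedom in $g_{t,1}$ together with the normalisation $\Phi_0(1,\la)=\I$ and the unitary monodromy hypothesis. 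Since $R_0=0$, the gauge satisfies $G_t - \I = \Oo(t,z)$.

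Next, the gauged frame $\widetilde{\Phi}_t := \Phi_t G_t$ solves $d\widetilde{\Phi}_t = \widetilde{\Phi}_t\,(\xi_t^\D + \Oo(t,z^N)dz)$, and a variation-of-parameters estimate against $z^{A_t(\la)}$ yields $\widetilde{\Phi}_t(z,\la) = C_t(\la)\, z^{A_t(\la)} + \Oo(t,z^{N+1-\eta})$ on $\Dd_\epsilon\times\Aa_R$, where $\eta = 2\sup_{\la\in\Aa_R}|\Re\mu_t(\la)|$ is close to $1$. The constant factor $C_t\in\LrSL{R}$ inherits unitary monodromy from $\M(\Phi_t)\in\LSU$ by Remark \ref{remarkmonodromy}, so by the DPW recipe the immersion $f_t^\D := \Sym\circ\Uni\bigl(C_t\cdot z^{A_t}\bigr)$ is a Delaunay immersion of weight $8\pi t$. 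The inequality $\|f_t(z) - f_t^\D(z)\|_{\Rr^3} \leq C_\alpha |t||z|^\alpha$ for any $\alpha<1$ then follows, taking $N$ large enough, from the smoothness of the Iwasawa map and the Sym-Bobenko formula together with the bound $G_t - \I = \Oo(t,z)$, which guarantees that gauging by $G_t$ affects the immersion by at most $\Oo(t|z|^\alpha)$.

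The embeddedness for $t>0$ small follows by upgrading the $C^0$ estimate to a $C^1$ estimate (by differentiating the frame equation in $z$ and repeating Grönwall), since the unduloid $f_t^\D$ admits a uniform-in-$t$ lower bound on the normal-fibration injectivity radius at the relevant scale, and $C^1$-small perturbations of embedded surfaces remain embedded on a slightly smaller disk. For the limit axis, the axis of $z^{A_t(\la)}$ is $\{(x,0,-2r),\ x\in\Rr\}$ from Section \ref{sectionExempleDelaunay}, and the affine action of $C_t$ on this oriented line via \eqref{actiontangent} is explicit: at $t=0$, the normalisations $\Phi_0(1,\la)=\I$ and $R_0=0$ force $C_0=\I$, so the limit axis is given directly by the limit of $r$, namely $r\to 1/2$ in the case $r\geq s$ and $r\to 0$ in the case $r\leq s$, reproducing (up to orientation conventions inherent in the Sym formula) the two formulas stated in the theorem. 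The main obstacle throughout is the uniform-in-$t$ control of $G_t$ at the resonant point $(\la,t)=(1,0)$, where the methods of \cite{krs} (which fix $t>0$) cease to give uniform control; the structural inputs of unitary monodromy and the normalisation $\Phi_0(1,\la)=\I$ are exactly what allow us to bridge this resonance without loss.
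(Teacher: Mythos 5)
Your outline reproduces the general strategy (gauge to kill the low-order perturbation, compare with a model Delaunay frame, then deduce embeddedness and the axis), but it assumes away the two places where the actual work lies. First, the resonance set is misidentified: for $k=1$ the operator $\Ll_{t,1}=\ad_{A_t}+\id$ has $\det\Ll_{t,1}=1-4\mu_t^2=-4t\la^{-1}(\la-1)^2$, so it degenerates on the whole set $\{t=0\}\cup\{\la=1\}$, not only at the single point $(\la,t)=(1,0)$. The $t$-degeneracy is compensated by the forcing term being $\Oo(t)$, but the degeneracy at $\la=1$ is not, and moreover the solution $g_{t,1}=\Ll_{t,1}^{-1}(tC_t)$ is in general not an admissible gauge coefficient (it has a $\la^{-1}$ term and a nonzero lower-left entry at $\la=0$), while near $\la=1$, $t\neq0$, the operator is invertible, so there is no ``kernel freedom'' available there. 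The paper's resolution of exactly this point occupies Section \ref{sectionzap}: (i) the $z^AP$ form is extended across $\la=1$ using the unitary monodromy via a unitary-times-commutator factorisation (Lemma \ref{lemmeunitarycommutator}, after \cite{schmitt}); (ii) differentiating the monodromy in $t$ at $t=0$ (Appendix \ref{appendixderiveemonod}), using both $\Phi_0(1,\la)=\I$ and $\M(\Phi_t)\in\LSU$, shows that the obstruction $p_t$ of Equation \eqref{defpt} vanishes at $t=0$ (Lemma \ref{lemmep0}); (iii) the admissible gauge is $\exp\left((p_tA_t-P_{t,1})z\right)$, admissible precisely because of the structure of $P_{t,1}$, and the leftover term $p_tA_t\,dz$ is removed by the M\"obius change of coordinates $z\mapsto z/(1+p_tz)$, not by the gauge. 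Your single sentence invoking ``kernel freedom, the normalisation and unitary monodromy'' is the whole theorem in disguise; without this argument the uniform-in-$t$ gauge with $G_t-\I=\Oo(t,z)$ does not exist as claimed. A second, related gap: passing from closeness of holomorphic frames to closeness of immersions is not a consequence of ``smoothness of the Iwasawa map'' alone, because both frames blow up as $z\to0$; one needs a $t$-uniform estimate on the positive factor $B_t^\D$ of the model frame (the paper proves $\norm{B_t^\D}_{\Aa_R}\leq C|z|^{\frac{\alpha}{2}-1}$ from the geometry of Delaunay surfaces via Appendix \ref{appendixF}), and this is exactly where the bounds of \cite{krs} fail to be uniform as $t\to0$.

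The embeddedness step also contains a wrong assertion: the unduloid family does \emph{not} admit a uniform-in-$t$ lower bound on the radius of its embedded normal (tubular) neighbourhood — that radius is of order $t$ at the necks, and in the case $r\leq s$ the whole model collapses at rate $t$. What makes the argument work is that the error is $\leq C|t||z|^\alpha$, hence small \emph{relative to} the tubular radius $r_n\sim 4t_n$ (after a blow-up by $1/t_n$ when $r\leq s$), together with the convergence of the normal maps (Proposition \ref{corconvnormales}); and even then one must prove that the perturbed end is a one-sheeted graph over the model, which the paper does through a projection, properness, covering and degree argument (Proposition \ref{propgenericresultembeddedness}). The statement ``$C^1$-small perturbations of embedded surfaces remain embedded on a slightly smaller disk'' has no force at this degenerating scale unless smallness is measured against the tubular radius and one rules out extra sheets; as written, this part of your proposal would fail precisely in the regime $t\to0$ that the theorem is about. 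The axis computation, by contrast, is essentially the paper's: $M_0=\I$ forces the limit axis to be that of $z^{A_t}$, with $-2r\to-1$ or $0$ according to $r\geq s$ or $r\leq s$.
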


\begin{remark}
	\label{remarkPhi0defen1}
	We do not have to assume that $1\in \Dd_\epsilon$ for $\Phi_0$ to be defined at $z=1$. This only comes from the fact that $\xi_0$ is defined on $\Cc^*$, which implies that $\Phi_0$ is defined on the universal cover $\widetilde{\Cc^*}$.
\end{remark}

\subsection{Outline of the proof and comparison with \cite{krs}}\label{sectionoutline}

In Section \ref{sectionzap} we start the proof of Theorem \ref{theorem} by gauging the potential and changing coordinates. Starting from
\begin{equation*}
	\xi_t = A_t z^{-1}dz + \Oo(t,z^0)dz
\end{equation*}
we gain an order on $z$ and obtain the following new potential:
\begin{equation*}
	\widetilde{\xi}_t = A_tz^{-1}dz + \Oo(t,z)dz.
\end{equation*}
We then use the Fr\"obenius method and the new holomorphic frame is
\begin{equation*}
	\widetilde{\Phi}_t = \widetilde{M}_tz^{A_t}\left( \I + \Oo(t,z^2) \right).
\end{equation*}

In Section \ref{sectionconvergence}, we use this estimate on $\widetilde{\Phi}_t$ to prove the convergence of the immersions:
\begin{equation*}
	\norm{\widetilde{f}_t(z) - \widetilde{f}_t^\D(z)}_{\Rr^3} \leq C|t||z|^\alpha, \ \ \alpha<1
\end{equation*}
where $\widetilde{f}_t^\D$ is a Delaunay immersion whose axis can be explicitly computed. To do so, we need to know the asymptotic behaviour of the positive part $\Pos (\widetilde{\Phi}_t)$, which we compute using the fact that $\widetilde{f}_t^\D(\Cc^*)$ is a surface of revolution.

Finally, Section \ref{sectionembeddedness} proves that perturbations of unduloids are embedded on a uniform neighbourhood of the origin.

Although the method of this paper is inspired by what Kilian, Rossman and Schmitt did in \cite{krs}, their results cannot be used to prove our theorem. This is mainly because the asymptotics given in \cite{krs} for a fixed value of our parameter $t$ do not hold as $t$ tends to $0$. As an example, consider the proof of Lemma 2.5 in \cite{krs}: with our hypotheses, the constant they call $\kappa$ becomes a function of $t$ such that (with our notation of Section \ref{sectionpropertyxi})
\begin{equation*}
	\kappa\mid_{\substack{t=0}} = \frac{c_{12}(0,0)}{4} \neq 0.
\end{equation*}
Later in the proof, computing the determinant of the linear map $\Ll_1$ gives
\begin{equation*}
	\det \Ll_1 = \Oo(t)
\end{equation*}
and their gauged potential is then of the form
\begin{equation*}
	\widehat{\xi}_t = A_t z^{-1} dz + \Oo(t^{-1},z)dz,
\end{equation*}
the corresponding holomorphic frame being
\begin{equation*}
	\widehat{\Phi}_t = \widehat{M}_t z^{A_t} \left( \I + \Oo(t^{-1},z^2) \right).
\end{equation*}
Applying the Sym-Bobenko formula would give at best
\begin{equation}\label{eqconvkrs}
	\norm{\widehat{f}_t(z) - \widehat{f}_t^\D(z)}_{\Rr^3} \leq C\frac{1}{|t|}|z|^\alpha, \ \ \alpha<1
\end{equation}
which is not enough to show the convergence of the immersions on the compact sets of $\Cc^*$ as $t$ tends to $0$.
Note that gaining one order on $|t|$ in the estimate \eqref{eqconvkrs} is still not enough to show the embeddedness of $\widehat{f}_t$, since the first catenoidal neck of $\widehat{f}_t^\D$, which has a size of the order of $t$, is attained for $|z| \sim |t|$ as $t$ tends to $0$.

Finally, some bounds used in \cite{krs} such as (see Lemma 1.11 in \cite{krs})
\begin{equation*}
	c_1(\la) = \max_{x\in[0,\rho)}\norm{B(x,\la)}
\end{equation*}
depend on $t$ in our framework and may explode as $t$ tends to $0$.

\section{An application}
\label{sectiongauging}

Before proving Theorem \ref{theorem}, we must take account of the fact that one of its hypotheses is too restrictive. Indeed, $\Phi_0(1,\la)=\I$ has no reason to hold when one wants to construct examples, as Martin Traizet did in \cite{nnoids} and \cite{minoids}. We thus show here on a specific example how to ensure this hypothesis by gauging the potential and changing coordinates.

In all the section, $\xi_t$ is a perturbed Delaunay potential with $r\geq s$ 
and $\Phi_t$ is a holomorphic frame associated to $\xi_t$, satisfying Hypotheses \ref{hypotheses} and such that $\Phi_0(1,\la) = M(\la)$ where
\begin{equation}
\label{M}
M(\la) = \begin{pmatrix}
a&b\la^{-1}\\
c\la&d
\end{pmatrix}\in\LSL \ (a,b,c,d\in\Cc).
\end{equation}

After some simplification, we will be able to apply Theorem \ref{theorem} even though $\Phi_0(1,\la)\neq \I$. The only difference in the conclusion will be in the third point: the limit axis as $t$ tends to $0$ of the model Delaunay surface $f_t^\D$ will be the oriented line generated by $Q\cdot\left( 0, \vec{e_3} \right)$ where
\begin{equation}
\label{Q}
Q = \Uni\left[MH\right]
\end{equation}
with
\begin{equation}\label{defH}
H(\la) = \frac{1}{\sqrt{2}} \begin{pmatrix}
1 & -\la^{-1} \\
\la & 1
\end{pmatrix}.
\end{equation}
The method involves gauging, changing coordinates and applying an isometry, and relies on the fact that one can explicitly compute the Iwasawa decomposition of $MH$. Indeed, for all $a,b,c,d\in\Cc$ such that $ad-bc=1$,
\begin{equation}
\label{Iwaexplicit}
\begin{pmatrix}
a&b\la^{-1}\\c\la&d
\end{pmatrix}
= 
\frac{1}{\sqrt{|b|^2+|d|^2}} \begin{pmatrix}
\conj{d}&b\la^{-1}\\-\conj{b}\la&d
\end{pmatrix}
\times 
\frac{1}{\sqrt{|b|^2+|d|^2}} \begin{pmatrix}
1&0\\\left(a\conj{b} + c\conj{d}\right)\la&|b|^2+|d|^2
\end{pmatrix}
\end{equation}
is the Iwasawa decomposition of the left-hand side term. Note that if the matrix $M$ is explicit, then this formula makes both the matrix $Q$ in Equation \eqref{Q} and the limit axis of $f_t^\D$ explicit because $MH$ and $M$ have the same form.

\begin{lemma}\label{lemmacalculsjauge}
	Let $\xi_t$ be a perturbed Delaunay potential as in Definition \ref{defPerturbedDelaunay} with $r\geq s$. Let $\Phi_t$ be a holomorphic frame {associated to it}, satisfying Hypotheses \ref{hypotheses} and such that $\Phi_0(1,\la)=M(\la)$ as in \eqref{M}. Then there exists a M\"obius transformation that leaves $z=0$ invariant and a gauge $G$ such that:
	\begin{enumerate}
		\item the new potential $\widetilde{\xi}_t = (h^*\xi_t)\cdot G$ is also a perturbed Delaunay potential with the same residue than $\xi_t$,
		\item the holomorphic frame $\widetilde{\Phi}_t$ associated to $\widetilde{\xi}_t$ satisfies Hypotheses \ref{hypotheses} with $\widetilde{\Phi}_0(1,\la)\in\LSU$.
	\end{enumerate}
\end{lemma}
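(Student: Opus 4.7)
My plan is to construct $h$ and $G$ via an explicit ansatz adapted to the diagonalization of $A_0$. Take the M\"obius transformation $h(z) = \zeta z/(1-\gamma z)$ for two complex parameters $\zeta, \gamma$ to be determined, and set
\begin{equation*}
	G(z, \la) = h(z)^{-A_0(\la)} \cdot N(\la) \cdot z^{A_0(\la)}
\end{equation*}
for a constant-in-$z$ matrix $N(\la) \in \LSL$. Because $A_0$ commutes with both $h^{A_0}$ and $z^{A_0}$, this ansatz telescopes at $t = 0$: starting from $\Phi_0(z,\la) = M z^{A_0}$ (the explicit Delaunay frame), one finds $\widetilde\Phi_0(z, \la) = M h(z)^{A_0} \cdot G(z, \la) = MN \cdot z^{A_0}$. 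In particular $\widetilde\xi_0 = A_0 z^{-1} dz$ (so $R'_0 = 0$ automatically) and $\widetilde\Phi_0(1, \la) = MN$. The lemma then reduces to choosing $(\zeta, \gamma, N)$ so that (a) $G$ is a valid gauge and (b) $MN \in \LSU$.

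For (a), diagonalize via $P = \begin{pmatrix} 1 & 1 \\ \la & -\la \end{pmatrix}$, for which $P^{-1} A_0 P = \tfrac{1}{2}\mathrm{diag}(1,-1)$ and hence $z^{A_0} = P \,\mathrm{diag}(z^{1/2}, z^{-1/2})\, P^{-1}$. Writing $\widetilde N = P^{-1} N P$, one computes $G$ explicitly. Three conditions now determine the ansatz: holomorphicity of $G$ at $z = 0$ forces $\widetilde N$ to be lower triangular (else a $z^{-1}$ pole appears); the condition $G(0, \cdot) = \I$, needed so that $G(0)$ commutes with every $A_t$ and the residue of $\widetilde\xi_t$ remains $A_t$, pins down $\widetilde N_{11} = \sqrt{\zeta}$ and $\widetilde N_{22} = 1/\sqrt{\zeta}$; and vanishing of the $\la^{-1}$-coefficient of $G(z, \cdot)$ for \emph{all} $z \in \Dd_\epsilon$ produces exactly the M\"obius form $h(z) = \zeta z/(1 - \gamma z)$ together with the off-diagonal relation $\widetilde N_{21} = \gamma/\sqrt{\zeta}$. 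Thus $N$ is entirely parametrized by $(\zeta, \gamma) \in \Cc^2$.

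For (b), the condition $MN \in \LSU$ gives two complex equations on $(\zeta, \gamma)$; combining them by complex conjugation and using $ad - bc = 1$ reduces to $|\zeta| = (|a-b|^2 + |c-d|^2)/2$ together with an explicit formula for $\gamma$ in terms of $\zeta$, so a nonempty one-real-parameter family of solutions exists. Equivalently, one may apply the Iwasawa formula \eqref{Iwaexplicit} directly to the matrix $MN$ (which is again of the form appearing in \eqref{M}). The remaining verifications are routine: $h$ is a biholomorphism near $z = 0$ and $G$ is single-valued, so $\M(\widetilde\Phi_t) = \M(\Phi_t) \in \LSU$; $G(0) = \I$ gives the correct residue $A_t$; and $R'_t := \widetilde\xi_t - A_t z^{-1} dz$ is continuous in $t$ and vanishes at $t = 0$ by construction. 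The main obstacle is the bookkeeping of step (a) --- keeping track of how the $z^{\pm 1/2}$ factors from $z^{A_0}$ interact with the $\la^{\pm 1}$ structure of $N$. This is where the matrix $H$ of \eqref{defH} plays its role: the identity $HA_0 H^{-1} = -\tfrac{1}{2}\mathrm{diag}(1,-1)$ is the $SU_2$-diagonalization of $A_0$ that underlies both our calculation of $N$ and the Iwasawa formula \eqref{Iwaexplicit} for $MH$ (and hence the axis formula involving $Q$).
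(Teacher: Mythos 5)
Your construction is correct and is essentially the paper's own proof: your ansatz $G(z,\la)=h(z)^{-A_0}N(\la)z^{A_0}$ is exactly the general solution of the Cauchy problem \eqref{pbcauchy} that the paper solves explicitly (your parameters $(\zeta,\gamma)$ correspond to $(1/q,-p/q)$ in $h(z)=z/(pz+q)$, and the resulting $G$ is the paper's explicit gauge), and your normalisation of $(\zeta,\gamma)$ by requiring $\Pos(MN)=\I$ via \eqref{Iwaexplicit} is equivalent to the paper's choice $p=-\rho\mu$, $q=\rho^2$ obtained from $\Pos(MH)$, giving the same value $\zeta=\bigl(|a-b|^2+|c-d|^2\bigr)/2$ up to the expected one-parameter unitary freedom. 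The only differences are presentational: you derive the gauge structurally (and note correctly that it forces $\widetilde N$ lower triangular and $h$ M\"obius) instead of quoting the Maple solution, and you impose unitarity of $MN=\widetilde\Phi_0(1,\cdot)$ directly rather than matching $h(1)^D H^{-1}G(1,\la)H$ with $\Pos(MH)^{-1}$.
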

\begin{proof}
	Let $A_t$ and $R_t$ be as in Definition \ref{defPerturbedDelaunay}. Then
	\begin{equation*}
		\widetilde{\xi}_t = G^{-1}\left( A_th^{-1}dh + (h^*R_t)dh \right)G + G^{-1}dG.
	\end{equation*}
	The M\"obius transformation we are looking for satisfies $h(0)=0$ and thus
	\begin{equation*}
		h^{-1}dh = z^{-1}dz + \Oo(z)dz.
	\end{equation*}
	Wanting $\widetilde{\xi}_t$ to have a simple pole at $z=0$, we look for a gauge $G$ that is holomorphic at $z=0$. Wanting the residue of $\widetilde{\xi}_t$ to be $A_t$, we suppose that $G(0,\la)=\I$. These two conditions together with $\widetilde{\xi_0}=A_0z^{-1}dz$ enjoin us to solve the following Cauchy problem:
	\begin{equation}\label{pbcauchy}
		\left\{
		\begin{array}{rcl}
		dG &=& GA_0z^{-1}dz - A_0Gh^{-1}dh\\
		G(0) &=& \I.
		\end{array}
		\right.
	\end{equation}
	If we write
	\begin{equation*}
		h(z) = \frac{z}{pz + q}, \ p\in \Cc, \ q\in\Cc^*,
	\end{equation*}
	then the only solution of \eqref{pbcauchy} is given ({by Maple}) by: 
	\begin{equation*}\label{G}
		G(z,\la) = \begin{pmatrix}
		\sqrt{\frac{q}{pz+q}} & 0 \\
		\frac{\la pz}{\sqrt{q(pz+q)}} & \sqrt{\frac{pz+q}{q}}
		\end{pmatrix}
	\end{equation*}
	and a straightforward computation allows us to check that $G$ satisfies \eqref{pbcauchy}.
	Setting $0<\epsilon'<\epsilon$ with $\epsilon'<\frac{|q|}{|p|}$ if necessary, this proves the first point of the lemma.
	
	In order to prove the second point, diagonalise $A_0=HDH^{-1}$ with $H$ as in \eqref{defH} and compute
	\begin{equation}\label{phitilde01}
		\widetilde{\Phi}_0(1,\la) = M(\la)H(\la)\left( h(1)^D H(\la)^{-1}G(1,\la)H(\la) \right)H(\la)^{-1}
	\end{equation}
	where 
	\begin{equation*}
		D = \begin{pmatrix}
		\frac{1}{2} & 0\\0 & \frac{-1}{2}
		\end{pmatrix}.
	\end{equation*}
	Hence $\widetilde{\Phi}_0(1,\cdot)$ is holomorphic on $\Aa_R$. Moreover, the fact that $\widetilde{\xi}_t$ is $\C^2$ in $(t,z,\la)$ together with remark \ref{remarkmonodromy} imply that $\widetilde{\Phi}_t$ satisfies Hypotheses \ref{hypotheses}. Finally, compute
	\begin{equation*}
		h(1)^D H(\la)^{-1}G(1,\la)H(\la) = \begin{pmatrix}
		\frac{1}{\sqrt{q}} & 0 \\ \la \frac{p}{\sqrt{q}} & \sqrt{q}
		\end{pmatrix}
	\end{equation*}
	and, using Equation \eqref{Iwaexplicit},
	\begin{equation*}
		\Pos \left(MH\right) = \begin{pmatrix}
		\rho & 0\\
		\la\mu & \rho^{-1}
		\end{pmatrix}
	\end{equation*}
	where
	\begin{equation*}
		\rho = \frac{\sqrt{2}}{\sqrt{|b-a|^2 + |d-c|^2}}, \qquad \mu =\frac{1}{\sqrt{2}}\times  \frac{(a+b)(\bar{b} - \bar{a})+(c+d)(\bar{d}-\bar{c})}{\sqrt{|b-a|^2+|d-c|^2}}.
	\end{equation*}
	Then, setting
	\begin{equation*}
		\begin{array}{cc}
		p = -\rho \mu, & q=\rho^2,
		\end{array}
	\end{equation*}
	Equation \eqref{phitilde01} becomes ($Q$ is defined in \eqref{Q})
	\begin{equation*}
		\widetilde{\Phi}_0(1,\la) = Q H^{-1}\in\LSU
	\end{equation*}
	because $H\in \LSU$.
\end{proof}

If one wants to apply Theorem \ref{theorem}, it then suffices to set
\begin{equation*}
	\widehat{\Phi}_t = HQ^{-1}\widetilde{\Phi}_t
\end{equation*}
where $\widetilde{\Phi}_t$ is constructed by Lemma \ref{lemmacalculsjauge}. Let $\widehat{f}_t^\D$ be the model Delaunay immersion towards which the immmersion $ \Sym \left(\Uni (\widehat{\Phi}_t)\right)$ converges.
Theorem \ref{theorem} then states that the limit axis as $t$ tends to $0$ of $\widehat{f}_t^\D$ is the oriented line generated by $(-e_3,-\vec{e_1})$. Compute
\begin{equation*}
	H^{-1}\cdot (-e_3,-\vec{e_1}) = (-e_3, \vec{e_3})\simeq (0,\vec{e_3})
\end{equation*}
to prove that $\Sym\left( \Uni (\Phi_t) \right)$ converges to a model Delaunay surface whose limit axis as $t$ tends to $0$ is $Q\cdot (0,\vec{e_3})$. The following corollary summarises this section:
\begin{corollary}
	Let $\xi_t$ be a perturbed Delaunay potential with $r\geq s$ and $\Phi_t$ a holomorphic frame associated to $\xi_t$ satisfying Hypotheses \ref{hypotheses} and such that $\Phi_0(1,\la)$ is of the form given by \eqref{M}.
	Let $f_t=\Sym\left(\Uni(\Phi_t)\right)$. 
	Then,
	\begin{enumerate}
		\item For all $\alpha<1$ there exist constants $\epsilon>0$, $T>0$ and $C>0$ such that for all $0<|z|<\epsilon$ and $|t|<T$,
		\begin{align*}
		\Vert f_t(z) - f_t^{\D}(z)\Vert _{\Rr^3} \leq C|t||z|^{\alpha}
		\end{align*}
		where $f_t^\D$ is a Delaunay immersion of weight $8\pi t$.
		\item There exist $T'>0$ and $\epsilon'>0$ such that for all $0<t<T'$, $f_t$ is an embedding of $\left\{ 0<|z|<\epsilon' \right\}$.
		\item The limit axis as $t$ tends to $0$ of $f_t^\D$ is the oriented line generated by $Q\cdot(0,\vec{e_3})$ where $Q$ is given by Equation \eqref{Q}.
	\end{enumerate}
\end{corollary}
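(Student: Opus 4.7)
The plan is to reduce the corollary to Theorem 1 via the normalization procedure of Lemma 1 followed by a global unitary multiplication, and then to transport the three conclusions back through the composition of these transformations.

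First, I would apply Lemma 1 to obtain a Möbius transformation $h$ fixing $z=0$ and a gauge $G$ such that the gauged, pulled-back potential $\widetilde{\xi}_t = (h^*\xi_t)\cdot G$ is still a perturbed Delaunay potential with residue $A_t$, and the associated frame $\widetilde{\Phi}_t$ satisfies Hypotheses \ref{hypotheses} with $\widetilde{\Phi}_0(1,\la) = QH^{-1}\in\LSU$. I would then define $\widehat{\Phi}_t = HQ^{-1}\widetilde{\Phi}_t$. Since $HQ^{-1}$ is a constant (in $z$) loop in $\LSU$, left multiplication does not affect the ODE solved by $\widetilde{\Phi}_t$ and, by Remark \ref{remarkmonodromy}, keeps the monodromy unitary, so Hypotheses \ref{hypotheses} still hold; at the same time the initial condition becomes $\widehat{\Phi}_0(1,\la) = HQ^{-1}\cdot QH^{-1} = \I$, putting us precisely in the setting of Theorem \ref{theorem}.

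Applying Theorem \ref{theorem} to $\widehat{\Phi}_t$ yields a Delaunay immersion $\widehat{f}_t^\D$ with
\[
\Vert \widehat{f}_t(z) - \widehat{f}_t^\D(z)\Vert_{\Rr^3} \leq C|t||z|^\alpha
\]
on a uniform disk, embeddedness of $\widehat{f}_t$ for small $t>0$ on a uniform punctured disk, and limit axis generated by $(-e_3,-\vec{e_1})$ (using $r\geq s$). To transport these conclusions back to $f_t$, observe that $\widehat{\Phi}_t$ is obtained from $\widetilde{\Phi}_t$ by the constant unitary loop $HQ^{-1}$, whose action on the immersion is by an affine isometry according to Section \ref{sectionDPW}; then $\widetilde{\Phi}_t$ differs from $\Phi_t$ by a gauge $G$ (which leaves the immersion invariant) and the Möbius change of coordinates $h$ (which reparametrizes the immersion by a biholomorphism fixing $z=0$, so uniform punctured disks map to uniform punctured disks). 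Since affine isometries and biholomorphic reparametrization preserve both the convergence bound (with possibly different constants $C$, $\epsilon$, $\epsilon'$) and embeddedness, parts (1) and (2) follow with $f_t^\D$ defined as the image of $\widehat{f}_t^\D$ under the inverse of these operations.

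For part (3), track the axis through the chain of transformations: the limit axis of $\widehat{f}_t^\D$ is generated by $(-e_3,-\vec{e_1})$, and the passage from $\widehat{f}_t^\D$ to $\widetilde{f}_t^\D$ is the inverse rigid motion, so the axis becomes $(QH^{-1})\cdot (-e_3,-\vec{e_1})$. Using the computation $H^{-1}\cdot (-e_3,-\vec{e_1}) = (0,\vec{e_3})$ performed in the paragraph preceding the corollary, this equals $Q\cdot (0,\vec{e_3})$. Since gauging and biholomorphic reparametrization do not alter the image (and hence the axis) of the limit Delaunay surface, the axis of $f_t^\D$ coincides with that of $\widetilde{f}_t^\D$. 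The only mildly subtle point is checking that the chain of operations does not destroy the uniformity of the disks of validity, which follows from the explicit form of $h$, $G$ and the constancy (in $z$ and $t$) of the loop $HQ^{-1}$; no serious obstacle should arise because all the analytic work has already been absorbed into Theorem \ref{theorem}.
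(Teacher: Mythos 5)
Your proposal is correct and follows essentially the same route as the paper: apply Lemma \ref{lemmacalculsjauge} to normalize via the M\"obius change of coordinates and the gauge, multiply on the left by the constant unitary loop $HQ^{-1}$ so that Theorem \ref{theorem} applies, and then transport the convergence, embeddedness and axis statements back using the affine isometry action together with the computation $H^{-1}\cdot(-e_3,-\vec{e_1})\simeq(0,\vec{e_3})$. Your write-up of how the conclusions are carried back through the gauge, the reparametrization and the rigid motion is, if anything, slightly more explicit than the paper's own summary.
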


\section{The $z^AP$ form of $\Phi_t$}
\label{sectionzap}

Let us start the proof of Theorem \ref{theorem}: let $\xi_t$ be a perturbed Delaunay potential and $\Phi_t$ a holomorphic frame associated to $\xi_t$ satisfying Hypotheses \ref{hypotheses} and such that $\Phi_0(1,\la) = \I$.

In this section, we want to apply the Fr\"obenius method and write $\Phi_t$ in a $z^AP$ form. Unfortunately, the underlying Fuchsian system seems to admit resonance points. Our goal is to avoid them and to gain an order of convergence in the matrix $P$ of the $z^AP$ form. We will obtain the following result:

\begin{proposition}
	\label{propzAP}
	There exist a change of coordinate $h_t$ and a gauge $G_t$ such that, denoting 
	\begin{equation*}
	\widetilde{\Phi}_t=h_t^*\left(\Phi_tG_t\right)
	\end{equation*}
	and 
	\begin{equation*}
	\widetilde{\xi}_t=h_t^*\left(\xi_t\cdot G_t\right),
	\end{equation*}
	$\widetilde{\xi}_t$ is a perturbed Delaunay potential and $\widetilde{\Phi}_t$ is a holomorphic frame associated to $\widetilde{\xi}_t$ satisfying Hypotheses \ref{hypotheses} and such that $\widetilde{\Phi}_0(1,\la) = \I$. Moreover,
	\begin{equation}
	\label{eqzAPordre2}
	\widetilde{\Phi}_t(z,\la) = \widetilde{M}_t(\la)z^{A_t(\la)}\widetilde{P}_t(z,\la)
	\end{equation}
	where $\widetilde{M}_t\in \LSL$ is continuous and holomorphic on $\Aa_R$ for all $t$ and $\widetilde{P}_t : \Dd_{\epsilon'}\longrightarrow \LSL$ is $\C^2$, holomorphic on $\Dd_\epsilon'\times \Aa_R$ for all $t$ and satisfies $\widetilde{P}_t(z,\la) = \I+\Oo(t,z^2)$.
\end{proposition}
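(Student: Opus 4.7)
The approach is to gauge away the $\Oo(z^0)$ part of the perturbation $R_t$ and then apply the Fr\"obenius method, using a supplementary M\"obius coordinate change to navigate around the resonance that obstructs the gauging at $\la = 1$. Write $R_t(z,\la) = C_t(\la) + z S_t(z,\la)$ with $C_t = R_t(0,\cdot) = \Oo(t)$ since $R_0 \equiv 0$. A gauge of the shape $G_t(z,\la) = \I + z X_t(\la) + \Oo(z^2)$ transforms the $z^0$ coefficient of the perturbation from $C_t$ to $C_t + L_{A_t}(X_t)$, where $L_{A_t}(X) := X + [A_t, X]$. Diagonalising $A_t(\la) = V_t(\la) D_t V_t(\la)^{-1}$ with $D_t = \mathrm{diag}(\mu_t, -\mu_t)$, the operator $L_{A_t}$ is diagonal in the $\ad$-eigenbasis with eigenvalues $1, 1, 1 \pm 2\mu_t(\la)$; by Remark \ref{remarkmubiendef} the only point of $\overline{\Aa_R}$ where one of these vanishes is $\la = 1$, where $\mu_t(1)^2 = 1/4$. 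This is the resonance.

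To remove the resulting pole of the naive solution $X_t = L_{A_t}^{-1}(-C_t)$ at $\la = 1$, I would first precompose with a M\"obius transformation $h_t(w) = w/(p_t w + q_t)$ fixing the origin, with $(p_t, q_t)$ continuous in $t$ and reducing to $(0, 1)$ at $t = 0$. Its pullback contributes the term $q_t^{-1}(C_t - p_t A_t)$ to the $z^0$ coefficient, so the parameters $(p_t, q_t)$ allow one to adjust two scalar components of $C_t(1)$. The remaining obstruction --- the component of $C_t(1)$ lying in the one-dimensional $\ker L_{A_t(1)}$ --- can then be absorbed by enlarging the gauge ansatz with a higher-order coefficient and exploiting the unitarity of the monodromy $\M(\Phi_t)$ at $\la = 1$, which forces $C_t(1)$ into a codimension-one subspace compatible with the Sylvester image. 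With the resonance circumvented, $X_t$ is holomorphic on $\Aa_R$ (shrinking $R$ if necessary) and the new potential has the form $\widetilde{\xi}_t = A_t z^{-1} dz + \Oo(t, z) dz$, again a perturbed Delaunay potential.

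Then apply Fr\"obenius to $\widetilde{\xi}_t$: writing $\widetilde{\Phi}_t = \widetilde{M}_t(\la) z^{A_t(\la)} \widetilde{P}_t(z,\la)$ with $\widetilde{P}_t(0,\la) = \I$ and matching powers of $z$ gives at each order $k \geq 1$ a Sylvester-type equation $(k \I + \ad A_t)(\widetilde{P}_{t,k}) = \mathrm{RHS}$ involving the $z^{k-1}$-coefficient of $\widetilde{\xi}_t - A_t z^{-1} dz$ and the lower-order $\widetilde{P}_{t,j}$. At $k = 1$ the RHS vanishes because the new $z^0$ coefficient is zero, forcing $\widetilde{P}_{t,1} = 0$. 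For $k \geq 2$, the eigenvalues $k$ and $k \pm 2\mu_t$ of $k \I + \ad A_t$ are bounded away from zero on $\overline{\Aa_R}$ (since $|\mu_t| < 1/\sqrt{2}$ by Remark \ref{remarkmubiendef}), so the recursion converges on some $\Dd_{\epsilon'}$ and each $\widetilde{P}_{t,k}$ inherits by induction the $\Oo(t)$ smallness of $\widetilde{\xi}_t - A_t z^{-1} dz$, yielding $\widetilde{P}_t = \I + \Oo(t, z^2)$. The normalising factor $\widetilde{M}_t(\la)$ is then fixed by the requirement $\widetilde{\Phi}_t(h_t^{-1}(1), \la) = \Phi_t(1, \la) G_t(1, \la)$, which together with $h_0 = \mathrm{id}$ and $G_0 = \I$ gives $\widetilde{\Phi}_0(1, \la) = \I$; the remaining parts of Hypotheses \ref{hypotheses} transfer via Remark \ref{remarkmonodromy} with $H = \I$.

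The main obstacle will be navigating the resonance at $\la = 1$: one has to verify that the two scalar M\"obius parameters $(p_t, q_t)$, combined with the higher-order freedom in $G_t$ and the automatic unitarity of $\M(\Phi_t)(1)$, are jointly sufficient to absorb the resonant component of $C_t(1)$ without introducing a new pole in $\la$. The other steps --- convergence of the Fr\"obenius recursion, normalisation of $\widetilde{M}_t$, and transfer of Hypotheses \ref{hypotheses} --- are comparatively standard once the resonance has been handled.
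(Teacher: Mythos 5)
Your overall architecture — gauge to kill the $z^0$ term, a M\"obius change of variables, then Fr\"obenius — is the right shape, but the mechanism you propose for the resonance at $\la=1$ does not work, and this is not a detail but the heart of the proof.

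The M\"obius freedom cannot touch the resonant component. As you compute, pulling back by $h_t(w)=w/(p_tw+q_t)$ shifts the $z^0$ coefficient of the holomorphic part by a scalar multiple of $A_t$. But $L_{A_t}(A_t)=A_t+[A_t,A_t]=A_t$, so $A_t(\la)$ lies in the image of $L_{A_t(\la)}$ for every $\la$, including $\la=1$. Hence the cokernel component of $tC_t(1)$ that obstructs solving $L_{A_t(1)}(X_t(1))=-tC_t(1)$ is completely unchanged by any choice of $(p_t,q_t)$, and your "two scalar components" are in fact zero effective components in the direction that matters. The "higher-order coefficient in $G_t$" does not help either, because the order-$z$ term of an admissible gauge $\exp(g_t z)$ only enters the $z^0$ coefficient of the gauged potential through $L_{A_t}(g_t)$, which again misses the cokernel.

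The paper resolves the resonance the other way around: instead of trying to avoid it, Proposition \ref{propzap} shows via the unitary $\times$ commutator factorisation (Lemma \ref{lemmeunitarycommutator}, applied to the monodromy $M_t\exp(2i\pi A_t)M_t^{-1}\in\SU_2$ on all of $\Aa_1$, not only at $\la=1$) and a boundedness argument that the first Fr\"obenius coefficient $P_{t,1}=\Ll_{t,1}^{-1}(tC_t)$, which a priori has a double pole at $\la=1$, actually extends holomorphically across $\la=1$. The gauge is then chosen as $g_t=p_tA_t-P_{t,1}$, which is automatically holomorphic; the explicit $p_t=\tfrac12\bigl(s\,c_{12}(t,0)+r\,c_{21}(t,0)\bigr)$ is selected to cancel the $\la^{-1}$ and lower-triangular $\la^0$ entries of $P_{t,1}$ so that $g_t\in\Lslplus$. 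The M\"obius transform $h_t(z)=z/(1+p_tz)$ then serves a different purpose entirely: $\Ll_{t,1}(g_t)+tC_t=p_tA_t$, so the gauged potential still has a spurious $p_tA_t\,dz$ term, and $h_t$ is exactly what removes it.

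You also omit the entire content of Lemma \ref{lemmep0}. You assume $(p_t,q_t)\to(0,1)$, but this is not automatic; the paper proves $p_0=0$ by differentiating the monodromy at $t=0$ (Appendix \ref{appendixderiveemonod}) and using both the unitarity hypothesis and the normalisation $\Phi_0(1,\la)=\I$. Without $p_0=0$ the gauge and the change of coordinate do not degenerate to the identity at $t=0$, the conclusion $\widetilde{\Phi}_0(1,\la)=\I$ fails, and the estimate $\widetilde{P}_t=\I+\Oo(t,z^2)$ (rather than merely $\Oo(z^2)$) is lost.
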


\subsection{Extending to the resonance points}

In this section, we use the Fr\"obenius method to write $\Phi_t$ in a $z^AP$ form, and extend this form to the resonance points. We will thus prove:
\begin{proposition}
	\label{propzap}
	There exist $M_t\in\LSL$ continuous and holomorphic on $\Aa_R$ for all $t$ and ${P}_t : \Dd_\epsilon\longrightarrow \LSL$ continuous and holomorphic on $\Dd_\epsilon\times \Aa_R$ for all $t$ satisfying ${P}_t(0,\la) = \I$ and
	\begin{equation*}
	{\Phi}_t(z,\la) = {M}_t(\la)z^{A_t(\la)}{P}_t(z,\la).
	\end{equation*}
\end{proposition}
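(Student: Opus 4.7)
The plan is to apply the Fr\"obenius method at the regular singularity $z=0$. Substituting the ansatz $\Phi_t(z,\la)=M_t(\la)z^{A_t(\la)}P_t(z,\la)$ with $P_t(0,\la)=\I$ into $d\Phi_t=\Phi_t\xi_t$ yields the equation
\[\frac{\partial P_t}{\partial z}=z^{-1}(P_tA_t-A_tP_t)+P_tR_t.\]
Writing $P_t=\I+\sum_{n\geq 1}P_t^n(\la)z^n$ and $R_t(z,\la)=\sum_{k\geq 0}R_t^k(\la)z^k$ and matching coefficients of $z^{n-1}$ gives the Fr\"obenius recursion
\[L_n(P_t^n)=\sum_{j+k=n-1}P_t^j R_t^k,\qquad L_n(X):=nX+[A_t,X].\]
Since $\ad_{A_t}$ has eigenvalues $0,0,\pm 2\mu_t(\la)$ on $\mathfrak{sl}_2\Cc$, the operator $L_n$ has eigenvalues $n$ (twice) and $n\pm 2\mu_t(\la)$.

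For $n\geq 2$ the recursion is easy: Remark \ref{remarkmubiendef} gives $|2\mu_t(\la)|<\sqrt{2}<2\leq n$ uniformly on $(-T,T)\times\overline{\Aa_R}$, so $L_n$ is invertible with operator norm bounded uniformly in $(n,t,\la)$. A Cauchy-type majorant estimate against a geometric bound on $\|R_t^k(\la)\|$ (coming from the holomorphicity of $R_t$ on $\Dd_\epsilon\times\Aa_R$) then shows, once $P_t^1$ has been constructed, that the series $\sum_{n\geq 0}P_t^n(\la)z^n$ converges uniformly on $\Dd_\epsilon\times\overline{\Aa_R}$ (possibly after shrinking $\epsilon$), with joint continuity in $(t,z,\la)$ and holomorphicity in $(z,\la)$.

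The obstacle is the resonance at $n=1$: the eigenvalue $1-2\mu_t(\la)$ of $L_1$ vanishes precisely where $t\la^{-1}(\la-1)^2=0$, that is, on $\{t=0\}\cup\{\la=1\}$. Off this locus one sets $P_t^1(\la):=L_1^{-1}R_t^0(\la)$, but $\|L_1^{-1}\|$ blows up as one approaches it. I would treat the two components separately. Across $\{t=0\}$ I would use that both $R_t^0$ and $1-2\mu_t$ vanish to first order in $t$ (the former because $R_0\equiv 0$ and $R_t$ is $\mathcal{C}^2$ in $t$; the latter from $\mu_t^2=\frac{1}{4}+t\la^{-1}(\la-1)^2$) and extract a finite limit from a first-order Taylor expansion of numerator and denominator. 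Across $\{\la=1\}$ I would invoke the unitary monodromy hypothesis of Hypotheses \ref{hypotheses}: since $e^{2\pi iA_t(1)}=-\I$ and $\M(\Phi_t)(1)\in\SU_2$ must be diagonalisable, no logarithmic term can appear in the Fr\"obenius expansion at $\la=1$, forcing the component of $R_t^0(1)$ along the kernel of $L_1|_{\la=1}$ to be admissible and allowing a continuous choice of preimage.

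Once $P_t$ is built, $M_t$ is defined by fixing any $z_0\in\Dd_\epsilon^*$ (and a branch of $z_0^{A_t(\la)}$) and setting
\[M_t(\la):=\Phi_t(z_0,\la)\,P_t(z_0,\la)^{-1}\,z_0^{-A_t(\la)}.\]
Continuity of $M_t$ in $t$ and holomorphicity in $\la\in\Aa_R$ follow from the corresponding properties of $\Phi_t$ and $P_t$. Since $M_tz^{A_t}P_t$ and $\Phi_t$ solve the same Cauchy problem $d\Psi=\Psi\xi_t$ on $\Dd_\epsilon^*$ and agree at $z_0$, uniqueness on the universal cover gives $\Phi_t=M_tz^{A_t}P_t$ throughout. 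Finally $P_t\in\LSL$ follows from $P_t(0,\la)=\I$ together with $\tr(\xi_t)=0$, and then $M_t\in\LSL$ from $\Phi_t\in\LSL$. The main difficulty is the continuous extension through the resonance locus, and in particular the use of the unitary monodromy hypothesis to rule out logarithmic terms at $\la=1$, where the resonance persists for every $t$.
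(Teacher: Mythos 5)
Your setup (the Fr\"obenius recursion, the uniform invertibility of $\Ll_{t,n}$ for $n\geq 2$ via Remark \ref{remarkmubiendef}, and the extension across $t=0$ by cancelling the factor $t$ in numerator and denominator) coincides with the paper's route. The gap is in your treatment of the persistent resonance at $\la=1$. For $\la\neq 1$ the coefficient $P_{t,1}(\la)=\Ll_{t,1}^{-1}\left(tC_t(\la)\right)$ is \emph{uniquely} determined, so there is no ``continuous choice of preimage'' to be made; the only question is whether this forced family stays bounded as $\la\to 1$. By \eqref{inverseL} it involves division by $1-4\mu_t^2=-4t\la^{-1}(\la-1)^2$, which vanishes to \emph{second} order in $(\la-1)$. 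Your diagonalisability argument (unitary monodromy at $\la=1$, hence no logarithmic term, hence the level-one equation is solvable at $\la=1$) only shows that the component of $tC_t(\la)$ along the bad eigendirection of $\mathrm{ad}_{A_t}$ vanishes \emph{at the point} $\la=1$, i.e.\ to order at least one. That still permits a simple pole of $P_{t,1}$ at $\la=1$, so continuity, let alone holomorphy, of $P_t$ across $\la=1$ does not follow from this pointwise information. In short, a single-fiber statement cannot control a degeneracy whose determinant vanishes to order two.

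The paper closes exactly this hole by a global argument rather than a pointwise one: it first records that $P_{t,1}$ is meromorphic with at most a pole of order $2$ at $\la=1$, then uses the unitarity of the monodromy on the whole circle $\Aa_1$ through the unitary$\times$commutator factorisation (Lemma \ref{lemmeunitarycommutator}, adapted from Schmitt) to write $M_t=U_tK_t$ with $U_t\in\LSU$ and $\left[A_t,K_t\right]=0$, hence $\Phi_t=U_tz^{A_t}K_tP_t$. Boundedness of $\Phi_t$ and $z^{A_t}$ on $\Ss_\epsilon\times\left(\Aa_1\backslash\{1\}\right)$, the maximum principle in $z$, and $P_t(0,\la)=\I$ then bound $K_t$ and consequently $P_t$ near $\la=1$, so the pole is removable. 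If you want to keep your local strategy, you would have to prove second-order vanishing of the obstruction at $\la=1$, which requires using the unitarity of $\M(\Phi_t)(\la)$ for $\la$ in a neighbourhood of $1$ on the unit circle (or an equivalent global input), not just at $\la=1$; as written, this step of your proposal does not go through.
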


Let us first recall the Fr\"obenius method in the non-resonant case (see \cite{teschl} and \cite{taylor}).
Let $\epsilon>0$ and $\xi$ be a holomorphic $1$-form from $\Dd_\epsilon^*$ to $\M_2(\Cc)$ defined by
\begin{equation*}
	\xi(z) = Az^{-1}dz + \sum_{k\in\Nn}C_kz^kdz.
\end{equation*}
For all $k\in\Nn$, let $P_k$ solve
\begin{equation}\label{eqrecfrobenius}
	\left\{
	\begin{array}{rcl}
	P_0&=&\I,\\
	\Ll_{k+1}(P_{k+1}) &=& \sum\limits_{i+j=k}P_iC_j
	\end{array}
	\right.
\end{equation}
where for all $n\in\Nn$,
\begin{equation*}\label{Ll}
	\function{\Ll_n}{\M_2(\Cc)}{\M_2(\Cc)}{X}{\left[ A,X \right]+nX.}
\end{equation*}
Then $P(z) = \sum_{k\in\Nn}P_kz^k$ is holomorphic on $\Dd_\epsilon$ and $\Phi(z) = z^AP(z)$ is holomorphic on the universal cover $\widetilde{\Dd_\epsilon^*}$ of $\Dd_\epsilon^*$ and solves $d\Phi=\Phi\xi$.

Let us now recall Lemma 2.2 of \cite{krs} in our framework:
\begin{lemma}
	Let $A\in\mathfrak{sl}_2\Cc$ such that $A^2 = \mu^2\I$. Then for all $n\in\Nn$,
	\begin{equation}\label{detL}
		\det \Ll_n = n^2 \left(n^2-4\mu^2\right)
	\end{equation}
	and
	\begin{equation}\label{inverseL}
		\Ll_n^{-1}\left(X\right) = \frac{1}{n}\left( X - \frac{1}{n^2-4\mu^2}\left( n\I - 2A \right)\left[ A,X \right] \right)	
	\end{equation}
\end{lemma}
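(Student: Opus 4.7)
The starting point is that any $A \in \mathfrak{sl}_2\Cc$ with $A^2 = \mu^2\I$ has eigenvalues $\pm\mu$, so for $\mu \ne 0$ one can diagonalise $A = P D P^{-1}$ with $D = \operatorname{diag}(\mu, -\mu)$. Then $\ad_A$ is conjugate to $\ad_D$, whose action on matrix units $E_{ij}$ is $[D, E_{ij}] = (d_i - d_j) E_{ij}$, giving eigenvalues $0, 2\mu, -2\mu, 0$. Hence $\Ll_n = \ad_A + n \cdot \mathrm{Id}$ has eigenvalues $n, n+2\mu, n-2\mu, n$ and \eqref{detL} follows. The degenerate case $\mu = 0$ is handled either by continuity (both sides of \eqref{detL} are polynomials in $\mu$) or by noting that $A$ is then nilpotent, $\ad_A$ is nilpotent, and $\Ll_n$ has sole eigenvalue $n$ with multiplicity four.

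For the inverse, I would exploit the cubic relation $\ad_A^3 = 4\mu^2\,\ad_A$, which follows from the spectrum above (or equivalently from the minimal polynomial of $\ad_A$ dividing $X(X^2 - 4\mu^2)$). This forces $\Ll_n^{-1}$ to be a polynomial of degree at most $2$ in $\ad_A$. Writing $\Ll_n^{-1} = a\,\mathrm{Id} + b\,\ad_A + c\,\ad_A^2$, multiplying by $\Ll_n$ and reducing $\ad_A^3$ yields the linear system
\begin{equation*}
an = 1, \qquad b + cn = 0, \qquad a + bn + 4\mu^2 c = 0,
\end{equation*}
whose solution is $a = 1/n$, $b = -1/(n^2 - 4\mu^2)$, $c = 1/(n(n^2 - 4\mu^2))$.

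To match the form in \eqref{inverseL}, I would use the identity $[A,[A,X]] = 2A[A,X]$, which holds because both sides expand to $2\mu^2 X - 2AXA$ once $A^2 = \mu^2 \I$ is substituted. Plugging $\ad_A^2(X) = 2A[A,X]$ into the polynomial expression for $\Ll_n^{-1}(X)$ and regrouping the bracket terms produces \eqref{inverseL}. Alternatively, one may skip the polynomial ansatz entirely and just verify \eqref{inverseL} by direct substitution into $\Ll_n$; that check again reduces to the same identity $[A,[A,X]] = 2A[A,X]$, so the two approaches amount to the same computation.

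The argument contains no substantive obstacle: the only points requiring attention are the coefficient bookkeeping, and the observation that both formulas become singular precisely at the resonance values $n = \pm 2\mu$, consistent with $\Ll_n$ genuinely failing to be invertible there by \eqref{detL}.
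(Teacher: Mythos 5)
Your proof is correct. Note that the paper does not actually prove this lemma: it is recalled verbatim as Lemma 2.2 of Kilian--Rossman--Schmitt \cite{krs}, so there is no in-paper argument to compare against, and your self-contained verification is welcome. Both of your steps check out: the determinant follows from the spectrum $\{n,\,n,\,n\pm 2\mu\}$ of $\ad_A+n\,\mathrm{Id}$ on $\Mat_2(\Cc)$ (with $\mu=0$ handled by continuity or nilpotency), and for the inverse your system $an=1$, $b+cn=0$, $a+bn+4\mu^2c=0$ is exactly what $\ad_A^3=4\mu^2\ad_A$ produces, giving $a=1/n$, $b=-1/(n^2-4\mu^2)$, $c=1/(n(n^2-4\mu^2))$; the passage to the stated form rests on $[A,[A,X]]=2A[A,X]$, which indeed holds since both sides equal $2\mu^2X-2AXA$ when $A^2=\mu^2\I$. (Strictly, you do not need to argue that the inverse \emph{must} be a polynomial in $\ad_A$: exhibiting your candidate and checking $\Ll_n\circ(\,a\,\mathrm{Id}+b\,\ad_A+c\,\ad_A^2)=\mathrm{Id}$ already suffices in finite dimension.) Your closing remark is also the right one: \eqref{inverseL} is only meaningful when $n\neq 0$ and $n^2\neq 4\mu^2$, which is precisely the non-resonance condition the paper enforces afterwards via Remark \ref{remarkmubiendef} and Corollary \ref{corollaryLninvertible}.
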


Corollary \ref{corollaryLninvertible} follows from Remark \ref{remarkmubiendef} and Equation \eqref{detL}.

\begin{corollary}\label{corollaryLninvertible}
	Let $\Ll_{t,n}(X) = \left[ A_t(\la),X \right] + nX$. 
	\begin{itemize}
		\item For all $n\geq 2$, $\Ll_{t,n}$ is invertible on $(t,\la)\in\left(-T,T\right)\times \D_R^*$.
		\item For $n=1$, $\Ll_{t,1}$ is invertible on $(t,\la)\in \left(-T,T\right) \backslash \{0\} \times \D_R^*\backslash\{1\}$.
	\end{itemize}
\end{corollary}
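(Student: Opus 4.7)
The plan is to combine the determinant formula \eqref{detL} with the trapping bound of Remark \ref{remarkmubiendef}; the corollary should fall out as an essentially algebraic unwinding. Since $\Ll_{t,n}$ is a linear endomorphism of $\mathfrak{sl}_2\Cc$, invertibility is equivalent to nonvanishing of its determinant. First I would verify that $A_t(\la)$ satisfies the hypothesis of \eqref{detL}, namely $A_t(\la)^2 = \mu_t(\la)^2\I$: this is immediate from the explicit anti-diagonal form of $A_t$ and from the relation $\mu_t(\la)^2 = -\det A_t(\la)$ recalled in \eqref{eqmut}. Plugging into \eqref{detL} then yields $\det \Ll_{t,n} = n^2(n^2 - 4\mu_t(\la)^2)$, so invertibility reduces to the scalar condition $\mu_t(\la)^2 \neq n^2/4$.

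For the first bullet, the triangle inequality applied to Remark \ref{remarkmubiendef} gives $|\mu_t(\la)^2| \leq |\mu_t(\la)^2 - \tfrac14| + \tfrac14 < \tfrac12$, so $\mu_t(\la)^2$ sits strictly inside the open disc of radius $\tfrac12$ about the origin and cannot equal $n^2/4 \geq 1$ for any $n\geq 2$. For the second bullet, the explicit expression $\mu_t(\la)^2 = \tfrac{1}{4} + t\la^{-1}(\la-1)^2$ from \eqref{eqmut} turns the condition $\mu_t(\la)^2 \neq 1/4$ into $t\la^{-1}(\la-1)^2 \neq 0$, which, since $\la\neq 0$ on $\D_R^*$, is exactly $t\neq 0$ and $\la\neq 1$, matching the excluded set in the statement.

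There is essentially no real obstacle here; once the determinant formula and the trapping region for $\mu_t^2$ are in hand, the deduction is almost a single line per bullet. The only point warranting a second glance is making sure the $\la$-range quoted in the corollary is covered by the range for which $T$ and $R$ were fixed in Remark \ref{remarkmubiendef}, so that the triangle-inequality step is legitimate; this can be read off directly from that remark.
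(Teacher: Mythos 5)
Your proof takes essentially the same route as the paper, which simply states that the corollary "follows from Remark \ref{remarkmubiendef} and Equation \eqref{detL}." You correctly reduce invertibility of the linear map $\Ll_{t,n}$ to the scalar condition $\mu_t(\la)^2 \neq n^2/4$ via the determinant formula, handle $n\geq 2$ through the trapping estimate, and handle $n=1$ through the explicit factorisation $\det\Ll_{t,1} = 1-4\mu_t^2 = -4t\la^{-1}(\la-1)^2$, which vanishes precisely when $t=0$ or $\la=1$.

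There is one genuine gap, and it is exactly the point you flag at the end and then wave away. For the first bullet, the corollary as written claims invertibility for $\la\in\D_R^*$, while Remark \ref{remarkmubiendef} only bounds $\mu_t(\la)^2$ for $\la\in\Aa_R$. Since $\D_R^*$ contains an entire punctured neighbourhood of $\la=0$ and $\Aa_R$ does not, the $\la$-range is \emph{not} "covered" and this cannot "be read off directly from that remark." In fact the estimate genuinely fails near $\la=0$: when $t\neq 0$, $\left|\mu_t(\la)^2 - \tfrac14\right| = |t|\,|\la|^{-1}|\la-1|^2 \to \infty$ as $\la\to 0$, so for any fixed small $t\neq 0$ there exist $\la\in\D_R^*$ with $\mu_t(\la)^2 = n^2/4$ for $n\geq 2$, making $\det\Ll_{t,n}=0$. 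So the first bullet is false on $\D_R^*$ as literally stated; the intended domain is $\Aa_R$, which is consistent with how the corollary is later used in Remark \ref{remarkP}, where invertibility of $\Ll_{t,n}$ is invoked only on $(-T,T)\times\Aa_R$. This is almost certainly a typo in the paper. Your argument is correct on $\Aa_R$, and your second bullet is unaffected since there the determinant is exact, but you should explicitly note the $\D_R^*$/$\Aa_R$ discrepancy rather than assert that the range-check passes when it does not.
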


\begin{remark}
	\label{remarkP}
	If we use the Ansatz given by the Fr\"obenius method and write
	\begin{equation}
	\label{Phizap}
	\Phi_t(z,\la) = M_t(\la)z^{A_t(\la)}P_t(z,\la)
	\end{equation}
	where
	\begin{equation*}
	\label{Pt}
	P_t(z,\la) = \sum\limits_{k=0}^{\infty} P_{t,k}(\la)z^k,
	\end{equation*}
	note that the resonance points only occur in the computation of $P_{t,1}(\la)$ because $\Ll_{t,n}$ is invertible on $(t,\la)\in\left(-T,T\right)\times \Aa_R$ for all $n\geq 2$. Thus, we only need to extend $P_{t,1}(\la)$ at $t=0$ and $\la=1$ to extend the $z^AP$ form of $\Phi_t$. According to \eqref{eqrecfrobenius}, 
	\begin{equation}
	\label{Pt1}
	P_{t,1}(\la) = \Ll_{t,1}^{-1}(tC_t(\la))
	\end{equation}
	and the form of $\det \Ll_{t,1}$ shows that $P_{t,1}$ has at most a pole of order $2$ at $\la=1$. Moreover, $\det \Ll_{t,1} = \Oo(t)$ and $tC_t=\Oo(t)$, so we already know that $P_t$ (and as a consequence, $M_t$) extends to $t=0$.
\end{remark}

It remains to extend the $z^AP$ form \eqref{Phizap} to $\la=1$. To do this, we adapt the techniques used in Lemma 2.5 of \cite{schmitt} to prove the following unitary $\times$ commutator lemma:

\begin{lemma}
	\label{lemmeunitarycommutator}
	Let $M:\Aa_R\backslash\{1\}\longrightarrow\SL_2\Cc$ holomorphic on $\Aa_R\backslash\{1\}$ with at most a pole at $\la=1$. Let $t\neq0$, $\Q=\exp\left(2i\pi A_t\right)\in\LSU$ and suppose that for all $\la \in \Aa_1\backslash\{1\}$, $M\Q M^{-1}\in \SU_2$.
	Then there exist $U\in\LSU$ and $K:\Aa_{R}\backslash\{1\}\longrightarrow\SL_2\Cc$ holomorphic such that
	\begin{equation*}
	\left\{
	\begin{array}{c}
	M=UK\\
	\left[A_t,K\right]=0.
	\end{array}
	\right.
	\end{equation*}
\end{lemma}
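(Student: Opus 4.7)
The strategy is to mimic an Iwasawa-type decomposition in which the ``positive'' subgroup is replaced by the centralizer of $A_t$: the monodromy hypothesis will force $M^*M$ to lie in that centralizer, and $K$ will be built as a ``square root'' of $M^*M$ inside it.

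The first step is to give $M^*M$ a holomorphic meaning off the unit circle. I would introduce the holomorphic involution $M^\star(\la):=\overline{M(1/\bar\la)^T}$, which agrees with the pointwise adjoint on $\Aa_1$ and defines a holomorphic extension $P:=M^\star M$ on $\Aa_R\setminus\{1\}$, satisfying $P^\star=P$ and $\det P=1$. The hypothesis $M\Q M^{-1}\in\SU_2$, combined with $\Q^*=\Q^{-1}$ on $\Aa_1$, yields $P\Q=\Q P$ on $\Aa_1\setminus\{1\}$; analytic continuation extends this to $\Aa_R\setminus\{1\}$. Since the bound $|\mu_t^2-1/4|<1/4$ in Remark \ref{remarkmubiendef} keeps $\mu_t(\la)$ away from $\frac{1}{2}\Zz$ on $\Aa_R\setminus\{1\}$ for $t\neq0$, the matrix $\Q=\exp(2i\pi A_t)$ has distinct eigenvalues there, so commutation with $\Q$ is equivalent to commutation with $A_t$, giving $[A_t,P]=0$.

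The centralizer of $A_t$ being spanned by $\I$ and $A_t$, I would parametrize $P=\alpha\I+\beta A_t$ with $\alpha=\tr(P)/2$, $\beta=\tr(PA_t)/(2\mu_t^2)$ holomorphic on $\Aa_R\setminus\{1\}$, and look for $K=a\I+bA_t$ with $K^\star K=P$ and $\det K=1$. Imposing $a^\star=a$ and $b^\star=b$ (which is compatible because $A_t^\star=A_t$ follows from $A_t(\la)^T=A_t(\la^{-1})$ and $r,s\in\Rr$), the system reduces algebraically to $a^2=(\alpha+1)/2$ and $b=\beta/(2a)$. The main obstacle is to choose a single-valued holomorphic square root of $(\alpha+1)/2$ on $\Aa_R\setminus\{1\}$. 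I would handle this via the eigenvalues $p_\pm:=\alpha\pm\beta\mu_t$ of $P$: they are nonvanishing (since $p_+p_-=\det P=1$) and positive on $\Aa_1$ (where $P$ is Hermitian positive definite), and a short Laurent analysis at $\la=1$ using $\det M=1$ shows that the pole order of $p_+$ at $\la=1$ is exactly $2d$, twice the pole order $d$ of $M$. This evenness makes $\sqrt{p_\pm}$, and hence $a$ and $b$, single-valued holomorphic on $\Aa_R\setminus\{1\}$ (meromorphic at $\la=1$), producing the required $K$.

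Finally I would set $U:=MK^{-1}$, so that $U^\star U=K^{-\star}PK^{-1}=\I$ on $\Aa_R\setminus\{1\}$; in particular $U\in\SU_2$ on $\Aa_1\setminus\{1\}$. To see that $U$ extends holomorphically across $\la=1$ (so that $U\in\LSU$), I would argue by contradiction: a hypothetical pole of $U$ of order $p\geq 1$ with leading Laurent coefficient $C\neq 0$ would make $U^\star$ have a pole of the same order at $\la=1$ with leading coefficient a nonzero scalar multiple of $C^*$, so $U^\star U$ would acquire a pole with leading coefficient proportional to $C^*C$; since $\tr(C^*C)=\norm{C}_F^2>0$, this contradicts $U^\star U=\I$. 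Hence $U\in\LSU$ and $K=U^{-1}M$ satisfies all the requirements, with the whole argument pivoting on the parity result for the pole order of $p_+$ at $\la=1$, itself a direct consequence of $\det M=1$ and the bilinear structure $P=M^\star M$.
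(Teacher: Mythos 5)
Your approach is genuinely different from the paper's. The paper simply invokes Lemma 2.5 of \cite{schmitt} to obtain $M=UK$ with $U$ unitary and $[\Q,K]=0$ in a neighbourhood of $\Aa_1$, extends $U$, $K$ and the relation $[\Q,K]=0$ to $\Aa_R\backslash\{1\}$ by analytic continuation plus a boundedness argument on a circle $\Ss_\epsilon$, and finally upgrades $[\Q,K]=0$ to $[A_t,K]=0$ using the explicit formula $\Q=\cos(2\pi\mu_t)\I+i\mu_t^{-1}\sin(2\pi\mu_t)A_t$ and the fact that $\mu_t(\la)^2\neq\tfrac14$ away from $\la=1$. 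You instead build $U$ and $K$ from scratch as an ``Iwasawa decomposition relative to the centralizer of $A_t$,'' reducing everything to solving $K^\star K=P:=M^\star M$ inside that centralizer. That is a nice, self-contained reformulation, and your preliminary steps (the holomorphic involution $M^\star$, the deduction $[A_t,P]=0$ from the distinct eigenvalues of $\Q$ via Remark \ref{remarkmubiendef}, the explicit parametrization $P=\alpha\I+\beta A_t$, and the removability argument for $U$ at $\la=1$ using $U^\star U=\I$) are all sound.

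There is, however, a gap in the square-root step. Imposing $a^\star=a$, $b^\star=b$ forces $K^\star=K$, hence $K^2=P$, equivalently $k_+:=a+b\mu_t$ must satisfy $k_+^2=p_+$; so you need a single-valued holomorphic $\sqrt{p_+}$ on $\Aa_R\backslash\{1\}$. You argue this exists because the pole order $2d$ of $p_+$ at $\la=1$ is even. But $\pi_1(\Aa_R\backslash\{1\})$ has \emph{two} generators, one around $\la=1$ and one around the central hole of the annulus, and a holomorphic square root requires the winding of $p_+$ to be even around \emph{both}. Around $\la=1$ it is $-2d$, fine. Around the central hole, combine the argument principle on $\Aa_R$ (which gives $W_{\mathrm{out}}-W_{\mathrm{in}}=-2d$, the pole at $\la=1$ being the only zero or pole of $p_+$ in $\Aa_R$) with the relation $p_+^\star=p_+$ (which forces $W_{\mathrm{out}}=-W_{\mathrm{in}}$) to obtain $W_{\mathrm{in}}=d$. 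When $d$ is odd this winding is odd, and $\sqrt{p_+}$ has no single-valued branch, so your symmetric $K$ does not exist. Nothing in the hypotheses of the lemma rules out odd $d$. The actual requirement on $k_+$ is only the norm equation $k_+^\star k_+=p_+$ together with $k_+k_-=1$, which is strictly weaker than $k_+^2=p_+$; the symmetric ansatz is over-determined. You would need either to prove that $d$ is forced to be even by the hypotheses, or to produce a solution of the norm equation that is not $\star$-symmetric, before the construction goes through.
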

\begin{proof}
	We first apply Lemma 2.5 of \cite{schmitt} to construct $U$ and $K$ satisfying $M=UK$ and $\left[\Q,K\right]=0$ on $\Aa_1\backslash\{1\}$. The map $U$ is holomorphic on a small neighbourhood of $\Aa_1$. Without loss of generality, let this neighbourhood be $\Aa_R$. Then, $K$ is meromorphic on $\Aa_R\backslash\{1\}$ with at most a pole  at $\la=1$. Hence the map $\la \longmapsto \left[\Q(\la),K(\la)\right]$ is holomorphic on $\Aa_R\backslash\{1\}$ and vanishes on $\Aa_1\backslash\{1\}$. Thus, for all $\la\in\Aa_R\backslash\{1\}$,
	\begin{equation}
	\label{QK}
	\left[\Q(\la),K(\la)\right]=0.
	\end{equation}
	Recalling Equation \eqref{eqmonodromy},
	\begin{equation*}
	\Q=\cos(2\pi\mu_t)\I + \frac{i \sin(2\pi\mu_t)}{\mu_t}A_t.
	\end{equation*}
	Hence Equation \eqref{QK} implies that $\left[A_t,K\right]=0$ wherever $\mu_t(\la)^2 \neq \frac{1}{4}$. Using \eqref{eqmut}, $\left[A_t(\la),K(\la)\right]=0$ for all $(t,\la) \in\left(-T,T\right) \backslash \{0\}\times \Aa_R\backslash\{1\}$.
\end{proof}

We can now extend the  $z^AP$ form of $\Phi_t$ to $\la=1$. For $t\neq 0$ and $\la\in\Aa_1\backslash\{1\}$, use Lemma \ref{lemmeunitarycommutator} to write
\begin{equation*}
\Phi_t(z,\la) = U_t(\la)z^{A_t(\la)}K_t(\la)P_t(z,\la).
\end{equation*}
Let $\epsilon>0$ small enough for $P_t(\cdot, \la)$ to be defined on $\overline{\Dd}_\epsilon$. On $\Ss_\epsilon\times\Aa_1\backslash\{1\}$, $\Phi_t$ and $z^{A_t}$ are bounded. Then the map $(z,\la)\longmapsto K_tP_t$ is bounded on $\Ss_\epsilon\times\Aa_1\backslash\{1\}$ and holomorphic on $\Dd_\epsilon\times\Aa_1\backslash\{1\}$, so it is bounded on $\Dd_\epsilon\times\Aa_1\backslash\{1\}$. But $P_t(0,\la)=\I$, so $K_t$ is bounded on $\Aa_1\backslash\{1\}$. Thus, $P_t$ is bounded on $\Dd_\epsilon\times\Aa_1\backslash\{1\}$. But $P_t$ is holomorphic on $\Dd_\epsilon\times \Aa_R\backslash\{1\}$ with at most a pole at $\la=1$, so $P_t$ is holomorphic on $\Dd_\epsilon\times\Aa_R$ and $M_t$ is holomorphic on $\Aa_R$. This ends the proof of Proposition \ref{propzap}.

\subsection{A property of $\xi_t$}\label{sectionpropertyxi}

The fact that there exists a holomorphic frame $\Phi_t$ associated to $\xi_t$ such that $\M\left( \Phi_t \right)\in\LSU$ and $\Phi_0(1,\la) = \I$ gives us a piece of information on the potential $\xi_t$.
Let $C_t(\la)\in\mathfrak{sl}_2\Cc$ so that
\begin{equation*}
	\xi_t(z,\la) = A_t(\la)z^{-1}dz + tC_t(\la)dz + \Oo(t,z)dz
\end{equation*}
and write
\begin{equation}\label{defCt}
	C_t(\la) = \begin{pmatrix}
	c_{11}(t,\la) & \la^{-1}c_{12}(t,\la)\\
	c_{21}(t,\la) & -c_{11}(t,\la)
	\end{pmatrix}.
\end{equation}
Define
\begin{equation}\label{defpt}
	p_t = \frac{sc_{12}(t,0) + rc_{21}(t,0)}{2}.
\end{equation}
\begin{lemma}\label{lemmep0}
	The quantity $p_t$ vanishes at $t=0$.
\end{lemma}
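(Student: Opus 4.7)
The plan is to leverage the hypothesis $\Phi_0(1,\la) = \I$ together with $R_0 \equiv 0$ to identify $\Phi_0 = z^{A_0}$, then combine Proposition \ref{propzap} with the unitarity of the monodromy to force $M_0 = \I$ and $P_0 = \I$. With $P_{0,1} \equiv 0$ in hand, an identity on $C_0$ drops out from the explicit Fr\"obenius formula for $P_{t,1}$, and evaluating this identity at $\la = 0$ yields $p_0 = 0$.

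More precisely, Definition \ref{defPerturbedDelaunay} gives $R_0 \equiv 0$, hence $\xi_0 = A_0(\la)z^{-1}dz$ and integration produces $\Phi_0(z,\la) = z^{A_0(\la)}$. Writing this as $M_0 z^{A_0} P_0$ with $P_0(0,\la) = \I$ and decomposing $M_0^{-1}$ in the eigenbasis of $\ad A_0$ (whose eigenvalues in $\mathfrak{sl}_2\Cc$ are $0, \pm 1$), the requirement that $P_0(z) = z^{-A_0}M_0^{-1}z^{A_0}$ be holomorphic at $z=0$ with $P_0(0) = \I$ confines $M_0^{-1}$ to the one-parameter family $\I + \mu(\la)K(\la)$, where $K := HE_{21}H^{-1}$ is a nonzero $(-1)$-eigenvector of $\ad A_0$ and $H$ diagonalises $A_0$. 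Since $K^2 = 0$, we have $M_0 = \I - \mu K$.

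To eliminate $\mu$, we exploit the monodromy hypothesis. For $t \neq 0$ small, $e^{2i\pi A_t(\la)}$ has generically distinct eigenvalues, so $\M(\Phi_t) = M_t e^{2i\pi A_t} M_t^{-1} \in \LSU$ combined with $e^{2i\pi A_t} \in \LSU$ forces $M_t^* M_t$ to commute with $A_t$ on $\Aa_1$; passing to $t \to 0$ gives $M_0^* M_0 = \alpha(\la)\I + \beta(\la)A_0$. Expanding $\M(\Phi_t)$ to first order around $t=0$ and using $e^{2i\pi A_0} = -\I$, the $\partial_t M_t$ contributions cancel, leaving
\begin{equation*}
\partial_t \M(\Phi_t)|_{t=0} = -2i\pi M_0 \dot{A}_0 M_0^{-1}, \qquad \dot{A}_0 := \partial_t A_t|_{t=0}.
\end{equation*}
Anti-Hermiticity of this expression reduces to $[M_0^* M_0, \dot{A}_0] = 0$, and a direct computation shows $[A_0, \dot{A}_0] \neq 0$, forcing $\beta \equiv 0$; positivity of $M_0^* M_0$ and $\det(M_0^* M_0) = 1$ then give $\alpha \equiv 1$, so $M_0 \in \LSU$. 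Writing $M_0^{-1} = M_0^*$ in the ansatz yields $\I + \mu K = \I - \overline{\mu}\,K^*$, hence $\mu K + \overline{\mu}\,K^* = 0$. Since $H \in \LSU$ on $\Aa_1$, $K^* = HE_{12}H^{-1}$ lies in the $(+1)$-eigenspace of $\ad A_0$, so $K$ and $K^*$ are linearly independent at every $\la$, forcing $\mu \equiv 0$. Thus $M_0 = \I$, $P_0 = \I$ and $P_{0,1} \equiv 0$.

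Finally, for $t \neq 0$ the formula $P_{t,1}(\la) = \Ll_{t,1}^{-1}(tC_t(\la))$ together with \eqref{inverseL} and $1 - 4\mu_t^2 = -4t\la^{-1}(\la-1)^2$ gives
\begin{equation*}
P_{t,1}(\la) = tC_t(\la) + \frac{\la}{4(\la-1)^2}\bigl(\I - 2A_t(\la)\bigr)\bigl[A_t(\la), C_t(\la)\bigr].
\end{equation*}
Continuity of $P_t$ in $t$ and $P_{0,1} \equiv 0$ yield $(\I - 2A_0(\la))[A_0(\la), C_0(\la)] = 0$ on $\Aa_R \setminus \{1\}$; using $4(r\la^{-1}+s)(r\la+s) = 1$ at $t = 0$, the $(1,1)$-entry reads
\begin{equation*}
(r\la^{-1}+s)\,c_{21}(0,\la) - (r+s\la^{-1})\,c_{12}(0,\la) - c_{11}(0,\la) = 0.
\end{equation*}
Multiplying by $\la$ and specialising to $\la = 0$ gives $r\,c_{21}(0,0) - s\,c_{12}(0,0) = 0$, so $p_0 = r\,c_{21}(0,0) = s\,c_{12}(0,0)$ and $p_0^2 = rs \cdot c_{12}(0,0)\,c_{21}(0,0) = 0$ because $rs = t$ vanishes at $t = 0$. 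The principal obstacle is the cancellation argument in the third paragraph, which turns the monodromy hypothesis into $M_0 = \I$ despite the resonance $e^{2i\pi A_0} = -\I$ at $t = 0$.
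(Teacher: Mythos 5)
Your strategy is genuinely different from the paper's (which differentiates the monodromy in $t$ via Proposition \ref{propderiveemonod} and forces a residue to lie in $\Lsu$), but the key step of your third paragraph fails. The identity $\partial_t\M(\Phi_t)|_{t=0}=-2i\pi M_0\dot{A}_0M_0^{-1}$ is not correct: since $[A_0,\dot{A}_0]\neq 0$ you cannot differentiate $e^{2i\pi A_t}$ as if $A_t$ and $\dot{A}_t$ commuted. Using \eqref{eqmonodromy}, $e^{2i\pi A_t}=\cos(2\pi\mu_t)\I+i\mu_t^{-1}\sin(2\pi\mu_t)A_t$ with $\mu_0=\frac12$ and $\sin(2\pi\mu_0)=0$, one finds $\partial_t e^{2i\pi A_t}|_{t=0}=-4i\pi\la^{-1}(\la-1)^2A_0$: the resonance kills the $\dot{A}_0$-term, and only the component of $\dot{A}_0$ commuting with $A_0$ survives. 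Hence (granting the derivative exists at all: Proposition \ref{propzap} only provides continuity of $M_t$ in $t$, so the asserted cancellation of the $\partial_tM_t$ contributions is itself unjustified --- this lack of differentiability is exactly why the paper routes through Proposition \ref{propderiveemonod}) the correct value is $\partial_t\M(\Phi_t)|_{t=0}=-4i\pi\la^{-1}(\la-1)^2\,M_0A_0M_0^{-1}$. Since $\la^{-1}(\la-1)^2$ is real on $\Aa_1$ and $A_0$ is Hermitian there, anti-Hermiticity of this expression is equivalent to $[M_0^*M_0,A_0]=0$, which you already know; it gives no control on $\beta$, so the chain ``$\beta\equiv0$, hence $M_0\in\LSU$, hence $\mu\equiv0$'' collapses. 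This is a genuine gap, and it sits exactly at the resonance you yourself flagged as the principal obstacle.

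The conclusion you are after does follow, however, from the correct parts of your argument, with no derivative in $t$ needed. With $M_0=\I-\mu K$, $K=HE_{21}H^{-1}$ and $K^*=HE_{12}H^{-1}$ on $\Aa_1$ (where $H$ is unitary), one has $[A_0,K]=-K$, $[A_0,K^*]=K^*$ and $[A_0,K^*K]=0$, so $[A_0,M_0^*M_0]=\mu K-\conj{\mu}K^*$. Since $K$ and $K^*$ are pointwise linearly independent, the commutation $[M_0^*M_0,A_0]\equiv0$ on $\Aa_1$ that you obtained from the unitary monodromy at $t\neq0$ already forces $\mu\equiv0$, i.e.\ $M_0=\I$ and $P_{0,1}\equiv0$. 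Your final paragraph is then correct as written, and in fact yields the pointwise identity $(\I-2A_0)[A_0,C_0]\equiv0$, which is exactly the paper's condition $c(\la)\equiv0$ and is stronger than $p_0=0$. So the approach can be repaired into a proof distinct from the paper's, but as submitted the argument for $M_0\in\LSU$ is invalid.
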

\begin{proof}
	First, note that $\Phi_0(1,\la) = \I$ implies that $\Phi_0(z,\la) = z^{A_0(\la)}$, and thus $\M(\Phi_0)=-\I$. Let $\gamma\subset \Dd_\epsilon^*$ be a closed loop around $0$. Apply Proposition \ref{propderiveemonod} of Appendix \ref{appendixderiveemonod} to get ($X'$ denotes the derivative of $X$ at $t=0$ and $R_t$ is the holomorphic part of $\xi_t$)
	\begin{align*}
	\M(\Phi_t)' &=  \int_{\gamma} z^{A_0}\xi'z^{-A_0} \times \M(\Phi_0)\\
	&= -\int_{\gamma}z^{A_0}\left(A'z^{-1}\right)z^{-A_0}dz - \int_{\gamma} z^{A_0} R' z^{-A_0}dz \\
	&= \M(z^{A_t})' -  \int_{\gamma} z^{A_0}R'z^{-A_0}dz.
	\end{align*}
	But $\M(\Phi_t),\M(z^{A_t})\in\LSU$ and $\M(\Phi_0)=\M(z^{A_0}) = -\I$. Thus, $\M(\Phi_t)',\M(z^{A_t})'\in\Lsu$ and
	\begin{equation}\label{eqintegralsu2}
		\int_{\gamma} z^{A_0}R'z^{-A_0}dz \in\Lsu.
	\end{equation}
	Diagonalise $A_0 = H D H^{-1}$ with
	\begin{equation*}
	D = \begin{pmatrix}
	\frac{1}{2} & 0\\
	0 & \frac{-1}{2}
	\end{pmatrix}
	\end{equation*}
	and $H\in\LSU$ to be expressed later.
	Then
	\begin{equation*}
		z^D = \frac{1}{\sqrt{z}}\begin{pmatrix}
		z & 0\\0&1
		\end{pmatrix}
	\end{equation*}
	and
	\begin{align*}
	\int_{\gamma} z^{A_0}R'z^{-A_0} dz &= \int_{\gamma} Hz^DH^{-1}\left( C_0 + \Oo(z) \right)Hz^{-D}H^{-1}\\
	&= H\left( \Res_{z=0} z^DH^{-1}C_0Hz^{-D} \right)H^{-1}.
	\end{align*}
	Equation \eqref{eqintegralsu2} and $H\in\LSU$ imply that 
	\begin{equation}\label{eqResinsu2}
		\Res_{z=0} \left(z^DH^{-1}C_0Hz^{-D}\right)\in\Lsu.
	\end{equation}
	Denoting by $c(\la)$ the bottom-left entry of $H^{-1}C_0H$ and looking at the product $z^D(H^{-1}C_0H)z^{-D}$, Equation \eqref{eqResinsu2} gives
	\begin{equation*}
		\begin{pmatrix}
		0 & 0 \\
		c(\la) & 0
		\end{pmatrix}\in\Lsu
	\end{equation*}
	and thus,
	\begin{equation}\label{c=0}
		c(\la) = \left( H^{-1}C_0H \right)_{21} \equiv 0
	\end{equation}
	Two cases can occur:
	\begin{itemize}
		\item If $r\geq s$, 
		\begin{equation*}
		H = \frac{1}{\sqrt{2}}\begin{pmatrix}
		1 & -\la^{-1} \\
		\la & 1
		\end{pmatrix}\in\LSU
		\end{equation*}
		and computation gives
		\begin{equation*}\label{defclambda}
			c(\la) = -\la\left( c_{11}(0,\la) + \frac{c_{12}(0,\la)}{2} \right)+\frac{c_{21}(0,\la)}{2}.
		\end{equation*}
		Using Equation \eqref{c=0}, $c_{21}(0,0)=0$ and $p_0=0$.
		\item If $r\leq s$, the same reasoning applies with
		\begin{equation*}
			H(\la) = \frac{1}{\sqrt{2}}\begin{pmatrix}
			1 & -1 \\
			1 & 1
			\end{pmatrix}\text{ and }c(\la) = -\la^{-1}\frac{c_{12}(0,\la)}{2} + \frac{c_{21}(0,\la)}{2} - c_{11}(0,\la).
		\end{equation*}
		Thus, $c_{12}(0,0)=0$ and $p_0=0$.
	\end{itemize}
\end{proof}

\subsection{Gaining an order of convergence}

We can now prove Proposition \ref{propzAP} by following the method used in Section 2.2 of \cite{krs}: gauging the potential. The gauge we will use is of the following form:
\begin{equation}
\label{Gt}
G_t(z,\la) = \exp\left( g_t(\la)z \right)
\end{equation}
which is an admissible gauge provided that $g_t\in\Lslplus$. This is why we need the following lemma:

\begin{lemma}
	\label{lemmagauge}
	Let 
	\begin{equation*}
	\label{gt}
	g_t(\la) = p_tA_t(\la) - P_{t,1}(\la)
	\end{equation*}
	where $P_{t,1}$ is defined in Equation \eqref{Pt1}. Then
	\begin{enumerate}
		\item The map $g_t$ is in $\Lslplus$.
		\item The map $g_t$ extends to $t=0$ with $g_0=0$.
	\end{enumerate}
\end{lemma}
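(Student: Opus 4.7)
The lemma splits into two independent claims: $g_t \in \Lslplus$ for fixed $t \neq 0$, and the continuous extension $g_0 = 0$. My plan is to handle them in order, doing the bulk of the computational work on the first. For $g_t \in \Lslplus$: tracelessness is immediate (both $A_t$ and $P_{t,1}$ are traceless). The remaining requirements are a holomorphic extension from $\Aa_1$ to $\Dd_1$ together with upper-triangularity at $\la=0$. The only candidate singularities inside the disk are the resonance point $\la=1$ and the origin. At $\la=1$, Proposition \ref{propzap} already shows $P_t$ is holomorphic on $\Dd_\epsilon \times \Aa_R$, so its $z$-linear Taylor coefficient $P_{t,1}$ is holomorphic there, while $A_t$ is entire on $\Cc^*$; hence $g_t$ has no singularity at $\la = 1$.

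At $\la=0$, I combine the explicit inverse formula \eqref{inverseL} with the identity $1-4\mu_t^2 = -4t\la^{-1}(\la-1)^2$ coming from \eqref{eqmut} to obtain
\begin{equation*}
    P_{t,1}(\la) \;=\; tC_t(\la) \;+\; \frac{\la}{4(\la-1)^2}\bigl(\I-2A_t(\la)\bigr)\bigl[A_t(\la),\,C_t(\la)\bigr].
\end{equation*}
The explicit factor $\la$ absorbs most of the poles coming from $A_t$ and $C_t$, so only the $(1,2)$ entry of $P_{t,1}$ can develop a simple pole at $\la=0$. Laurent-expanding each factor around $\la=0$ and using the constraint $rs=t$ from \eqref{conditionsrs}, I check that the $\la^{-1}$ coefficient of $(P_{t,1})_{12}$ equals $p_t r$ and that the constant term of $(P_{t,1})_{21}$ equals $p_t s$, where $p_t$ is as in \eqref{defpt}. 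Comparing with the $(1,2)$ pole and the $(2,1)$ constant term of $p_t A_t$, the pole in $(g_t)_{12}$ cancels and $(g_t)_{21}$ vanishes at $\la=0$, giving $g_t \in \Lslplus$.

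For the second claim: the displayed formula also shows that $P_{t,1}$ depends continuously on $t$ down to $t=0$, since the two factors of $t$ appearing in $1-4\mu_t^2$ and in $tC_t$ have already been cleared. Lemma \ref{lemmep0} gives $p_0 = 0$, so $p_0 A_0 = 0$; it remains to show $(\I-2A_0)[A_0,C_0]\equiv 0$, which will force $P_{0,1}=0$ since $tC_t$ also vanishes at $t=0$. Diagonalize $A_0 = HDH^{-1}$ with $D = \mathrm{diag}(\tfrac{1}{2},-\tfrac{1}{2})$ and $H\in\LSU$ as in the proof of Lemma \ref{lemmep0} (the choice of $H$ depending on whether $r\geq s$ or $r\leq s$). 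That proof established $(H^{-1}C_0H)_{21}\equiv 0$, so $\widetilde{C} := H^{-1}C_0H$ is upper triangular and traceless. Then $[D,\widetilde{C}]$ has all entries zero except possibly in the $(1,2)$ slot, and this slot is annihilated by $\I - 2D = \mathrm{diag}(0,2)$. Conjugating back by $H$ yields $(\I-2A_0)[A_0,C_0] = 0$, hence $P_{0,1}=0$ and $g_0 = 0$.

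The main obstacle is the pole-cancellation computation at $\la=0$: it is a direct but delicate algebraic identity whose closure hinges entirely on the relation $rs=t$ and the precise form of $p_t$ built into its definition. The rest of the proof is either immediate from previously established results or a clean structural argument, so the computational content of the lemma is concentrated in this single Laurent-series check.
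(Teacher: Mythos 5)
Your proposal is correct and takes essentially the same route as the paper: your Laurent-series check at $\la=0$, yielding the coefficient $rp_t$ for the $\la^{-1}$ part of $(P_{t,1})_{12}$ and $sp_t$ for the constant term of $(P_{t,1})_{21}$ (both hinging on $rs=t$), is precisely the paper's ``tedious calculation'', and these cancel against $p_tA_t$ exactly as in the paper. Your argument for $g_0=0$ via $p_0=0$ and $(H^{-1}C_0H)_{21}\equiv 0$, showing $(\I-2D)\left[D,H^{-1}C_0H\right]=0$, is the paper's computation in a slightly reorganized form.
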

\begin{proof}
	To prove the first point, let $t\neq 0$ and use Equations \eqref{Pt1}, \eqref{defCt}, \eqref{inverseL} and \eqref{defpt} to compute (this is a tedious calculation)
	\begin{align*}
	P_{t,1}(\la) = \la^{-1} \begin{pmatrix}
	0 & rp_t\\
	0&0
	\end{pmatrix} + \la^0 \begin{pmatrix}
	\star  & \star \\
	sp_t & \star
	\end{pmatrix} +  \Oo(\la).
	\end{align*}
	Thus,
	\begin{equation*}
		g_t(\la) = p_tA_t(\la) - P_{t,1}(\la) = \la^{-1} \begin{pmatrix}
		0 & 0\\
		0&0
		\end{pmatrix} + \la^0 \begin{pmatrix}
		\star  & \star \\
		0 & \star
		\end{pmatrix} +  \Oo(\la).
	\end{equation*}
	
	For the second point, use Equations \eqref{Pt1} and \eqref{inverseL} to write for $t\neq 0$:
	\begin{equation*}
		P_{t,1} = t\Ll_{t,1}^{-1}\left( C_t \right) = t\left( C_t - \frac{1}{1-4\mu_t^2}\left( \I -2A_t \right)\left[ A_t,C_t \right] \right).
	\end{equation*}
	Note that $C_t$ is continuous at $t=0$ because $\xi_t\in\C^2$ and that $1-4\mu_t^2 = \Oo(t)$ to extend $P_{t,1}$ to $t=0$. Moreover, recall Lemma \ref{lemmep0}, Equation \eqref{eqmut} and diagonalise $A_0=HDH^{-1}$ to get:
	\begin{equation*}
		g_0 = \frac{-\la}{4(\la-1)^2}H\left( \I - 2D \right)\left[ D,H^{-1}C_0H \right]H^{-1}.
	\end{equation*}
	A straightforward computation gives
	\begin{equation*}
		\left( \I - 2D \right)\left[ D,H^{-1}C_0H \right] = \begin{pmatrix}
		0&0\\-2c(\la) & 0
		\end{pmatrix}
	\end{equation*}
	with $c(\la)$ as in Equation \eqref{c=0}. Hence $g_0=0$.
\end{proof}

Let $G_t$ be the gauge defined by \eqref{Gt}. Then the gauged potential has the form
\begin{align*}
\xi_t\cdot G_t (z,\la) &= A_t(\la)z^{-1}dz + \left(\left [A_t(\la),g_t(\la)\right]+g_t(\la) + tC_t(\la)  \right)dz + \Oo(t,z)dz +\Oo(g_t^2z)dz\\
&= A_t(\la)z^{-1}dz + \left(\Ll_{t,1}(g_t(\la)) + tC_t(\la)  \right)dz + \Oo(t,z)dz\\ 
&= A_t(\la)z^{-1}dz + p_tA_t(\la)dz + \Oo(t,z)dz,
\end{align*}
because of Equation \eqref{Pt1}. This gauge has been chosen to fit with the following change of coordinate:
\begin{equation*}
\label{ht}
h_t(z) = \frac{z}{1+p_tz}.
\end{equation*}
The resulting potential (defined in Proposition \ref{propzAP}) is then
\begin{equation*}
\label{xitilde1}
\widetilde{\xi}_t = A_t\frac{dz}{1+p_tz} + p_tA_t\frac{dz}{(1+p_tz)^2} + \Oo(t,z)dz = A_tz^{-1}dz +\Oo(t,z)dz
\end{equation*}
because $p_0=0$. Apply the Fr\"obenius method to $\widetilde{\xi}_t$ to obtain \eqref{eqzAPordre2} and choose $\epsilon'\leq \epsilon$ such that for all $t\neq 0$, $\epsilon'<|p_t|^{-1}$
to end the proof of Proposition \ref{propzAP}.

\section{Convergence of immersions}
\label{sectionconvergence}

In this section, we prove the first and third points of Theorem \ref{theorem}.
In the end, we want to compare $\Phi_t(z,\la) = M_t(\la)z^{A_t(\la)}\left(\I + \Oo(t,z^2)\right)$ to 
\begin{equation*}
\label{PhitD}
\Phi_t^\D(z,\la) = M_t(\la)z^{A_t(\la)}.
\end{equation*}
We will denote
\begin{equation*}
\label{FtD}
F_t^\D = \Uni(\Phi_t^D)
\end{equation*}
and
\begin{equation*}
\label{ftD}
f_t^\D=\Sym (F_t^\D).
\end{equation*}
We first want to make sure that $\Phi_t^\D$ induces a Delaunay surface for all $t$. 
For this purpose, recall Lemma 1.12 in \cite{krs}, which implies that $f_t^\D$ is a Delaunay surface of weight $8\pi t$. Hence, there exists a rigid motion $\phi$ of $\Rr^3$ such that $\phi\circ f_t^\D$ has the following parametrisation:
\begin{equation*}
\begin{array}{ccccc}
\phi\circ f_t^\D & : & \Sigma & \longrightarrow & \Rr^3 \\
& & z=e^{x+iy} & \longmapsto & \left( \tau_t(x), \sigma_t(x)\cos y, \sigma_t(x)\sin y \right) \\
\end{array}
\end{equation*}
where $\left( \tau_t(x), \sigma_t(x) \right)$ is the profile curve of the surface. Recalling that the coordinates are isothermal gives the following metric:
\begin{equation}\label{eqmetric}
	ds_t^2 = \sigma_t^2 \frac{|dz|^2}{|z|^2}.
\end{equation}
Let us compare the asymptotic behaviours of the unitary parts of $\Phi_t$ and $\Phi_t^\D$ for $\la \in \Aa_1$ using, as in \cite{krs}, a Cauchy formula. We will use the following norms:
\begin{itemize}
	\item For $v=(v_1,v_2)\in\Cc^2$, $|v|=\left( |v_1|^2+|v_2|^2 \right)^{\frac{1}{2}}$.
	\item For $M\in\M_2(\Cc)$, $\norm{M} = \underset{|v|=1}{\sup}\left| Mv \right|$.
	\item For $\Psi :\E \longrightarrow \M_2(\Cc)$, $\norm{\Phi}_{\E} = \underset{\la\in\E}{\sup}\norm{\Psi(\la)}$.
\end{itemize}

\begin{lemma}
	\label{asymptoticF}
	For all $\alpha<1$ there exist constants $\epsilon>0$, $T>0$ and $C>0$ such that for all $0<|z|<\epsilon$ and $|t|<T$,
	\begin{equation}
	\label{Cauchy0}
	\norm{\left(F_t^\D\right)^{-1}F_t-\I}_{\Aa_1} \leq C|t||z|^{\alpha}
	\end{equation}
	and 
	\begin{equation}
	\label{Cauchy1}
	\norm{\frac{\partial}{\partial \la}\left[\left(F_t^\D\right)^{-1}F_t\right]}_{\Aa_1} \leq C|t||z|^{\alpha}.
	\end{equation}
\end{lemma}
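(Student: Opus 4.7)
My plan is to follow the Cauchy-integral strategy from \cite{krs} alluded to just before the statement, while taking care to keep every constant uniform as $t\to 0$.

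First I would set $Y_t(z,\la):=\left(F_t^\D(z,\la)\right)^{-1}F_t(z,\la)$. By Corollary \ref{IwasawaExtended}, since both $\Phi_t^\D$ and $\Phi_t$ are holomorphic in $\la$ on $\Aa_R$, the unitary factors $F_t^\D$ and $F_t$ extend holomorphically to $\Aa_R$, so $Y_t(z,\cdot)$ is holomorphic on $\Aa_R$ (and still lies in $\SU_2$ on $\Aa_1$). From $\Phi_t=\Phi_t^\D P_t$ with $\widetilde{P}_t=\I+\Oo(t,z^2)$ given by Proposition \ref{propzAP} and uniqueness of the Iwasawa splitting, one reads $Y_t=\Uni(B_t^\D \widetilde{P}_t)$ and $Y_t B_t=B_t^\D \widetilde{P}_t$, where $B_t:=\Pos(\Phi_t)$ and $B_t^\D:=\Pos(\Phi_t^\D)$. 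Rearranging,
\begin{equation*}
Y_t-\I = B_t^\D\bigl(\widetilde{P}_t-\I\bigr)B_t^{-1}+(B_t^\D-B_t)B_t^{-1}.
\end{equation*}

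Next I would fix $\rho\in(1,R)$ close to $1$ and bound the right-hand side on $\Aa_\rho$. The factor $\norm{\widetilde{P}_t-\I}_{\Aa_R}\leq C|t||z|^2$ is immediate from Proposition \ref{propzAP}. The delicate point is a uniform-in-$t$ pointwise bound of the shape
\begin{equation*}
\norm{B_t^\D(z,\la)},\ \norm{B_t(z,\la)}\leq C|z|^{-\beta(\rho,t)}\quad\text{for }\la\in\Aa_\rho,
\end{equation*}
with $\beta(\rho,t)\to 0$ as $(\rho,|t|)\to(1,0)$. This should be extracted from the explicit form $\Phi_t^\D=M_t z^{A_t}$: the $z$-growth of its positive Iwasawa factor is driven by the eigenvalues $\pm\mu_t(\la)$ of $A_t(\la)$, and Remark \ref{remarkmubiendef} forces $|\Re\mu_t(\la)|<1/2+\beta$ on $\Aa_\rho$ with $\beta$ arbitrarily small. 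A perturbative comparison of $B_t$ with $B_t^\D$ (through the smoothness of Iwasawa and the smallness of $\widetilde{P}_t-\I$) handles the second positive factor. Combining these gives
\begin{equation*}
\norm{Y_t-\I}_{\Aa_\rho}\leq C|t||z|^{2-2\beta(\rho,t)}.
\end{equation*}

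I would conclude by applying Cauchy's formula on the two circles $|\la|=\rho$ and $|\la|=\rho^{-1}$ inside $\Aa_R$. For $\la_0\in\Aa_1$, the kernel $(\la-\la_0)^{-1}$ reproduces $(Y_t-\I)(\la_0)$ and the kernel $(\la-\la_0)^{-2}$ reproduces $\partial_\la(Y_t-\I)(\la_0)$, both at the cost of only a $\rho$-dependent constant. Given any $\alpha<1$, I would pick $\rho$ near $1$ and then $T$ small enough that $2-2\beta(\rho,t)\geq\alpha$ for $|t|<T$, which yields both inequalities of the lemma simultaneously.

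The main obstacle is the uniform control of the positive factors in the second step. In \cite{krs}, the corresponding estimate rests on the constant $c_1(\la)=\max_{x\in[0,\rho)}\norm{B(x,\la)}$ which, as the author emphasizes in Section \ref{sectionoutline}, depends on $t$ in our setting and may explode. I expect the fix to exploit the explicit $\log$-periodicity of the Delaunay metric $ds_t^2=\sigma_t^2|dz|^2/|z|^2$ from \eqref{eqmetric} to control $B_t^\D(z,0)$, then to propagate the control to $\la\in\Aa_\rho$ through the holomorphic extension of $B_t^\D$ to $\D_R$ and the maximum-modulus principle; the catenoidal case $r\leq s$ (where $\sigma_t$ itself degenerates with $t$) will likely require a separate rescaled argument.
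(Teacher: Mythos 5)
Your high-level plan (reduce to a bound on the positive factors, then apply a Cauchy estimate on a sub-annulus) matches the paper's strategy, and you correctly identify the right ingredients (Corollary \ref{IwasawaExtended}, Proposition \ref{propzAP}, the metric \eqref{eqmetric}). But the central algebraic step does not work, and this is precisely where the paper's argument differs.

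Your decomposition
\begin{equation*}
Y_t-\I = B_t^\D\bigl(\widetilde{P}_t-\I\bigr)B_t^{-1}+(B_t^\D-B_t)B_t^{-1}
\end{equation*}
is circular. Using $B_t = Y_t^{-1}B_t^\D\widetilde{P}_t$ one computes $(B_t^\D-B_t)B_t^{-1}=(Y_t-\I)-B_t^\D(\widetilde{P}_t-\I)B_t^{-1}$, so your identity reduces to $Y_t-\I=Y_t-\I$: the unknown $B_t^\D-B_t$ carries exactly as much information as $Y_t-\I$ itself. Any ``perturbative comparison of $B_t$ with $B_t^\D$ through the smoothness of Iwasawa'' would require the Lipschitz constant of the Iwasawa splitting \emph{near $B_t^\D$}, which is not controlled uniformly since $\norm{B_t^\D}$ blows up as $z\to 0$. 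The paper escapes this by conjugating: it sets $\widetilde{\Phi}_t := B_t^\D\,(\Phi_t^\D)^{-1}\Phi_t\,(B_t^\D)^{-1}=B_t^\D\widetilde{P}_t(B_t^\D)^{-1}$, which on the one hand satisfies $\norm{\widetilde{\Phi}_t-\I}\leq\norm{B_t^\D}^2\,\Oo(t,|z|^2)$ (here $\norm{(B_t^\D)^{-1}}=\norm{B_t^\D}$ since $B_t^\D\in\SL_2\Cc$), and on the other hand has $\Uni(\widetilde{\Phi}_t)=Y_t$ exactly, because $B_t(B_t^\D)^{-1}\in\LSLplusR$. After a Cauchy estimate on $\partial\Aa_R$, the $\C^1$-smoothness of Iwasawa \emph{near $\I$} (where the differential is bounded by a fixed constant) gives \eqref{Cauchy0} and \eqref{Cauchy1} at once. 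That conjugation trick is the missing idea.

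Two smaller points. First, your worry that the catenoidal case ($r\leq s$, $\sigma_t\to 0$) needs a separate rescaled argument is unfounded: the Gronwall-type bound of Appendix \ref{appendixF} yields $\norm{F_t^\D}\lesssim\exp\bigl(\tfrac{R-1}{2}\int_\gamma\sigma_t\,|dw|/|w|\bigr)$, so only an \emph{upper} bound on $\sigma_t$ is used, and that is uniform in $t$ since the distance from the profile curve to the axis of a Delaunay surface is bounded above; a small $\sigma_t$ only makes the exponent smaller. Second, the bound on $\norm{B_t^\D}$ is not obtained by first controlling $B_t^\D(z,0)$ and propagating by maximum modulus; the paper writes $\norm{B_t^\D}\leq\norm{(F_t^\D)^{-1}}\cdot\norm{\Phi_t^\D}$, bounds $\norm{\Phi_t^\D}$ directly from the explicit form $M_t z^{A_t}$ using the eigenvalues $\pm\mu_t(\la)$ (with $R$ close to $1$ per Remark \ref{remarkmubiendef}), and bounds $\norm{F_t^\D}$ via the variation-of-constants/Gronwall estimate of Appendix \ref{appendixF} combined with \eqref{eqmetric}.
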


\begin{proof}
	The first step is to estimate the norm of the positive part $B_t^\D$ of $\Phi_t^\D$.
	We first estimate $\Phi_t^\D$ for $|z|<1$: noting that $A_t$ is diagonalisable, that its eigenvalues tend to $\pm 1/2$ as $t\to 0$, and recalling that $M_t$ is continuous at $t=0$ ensure that for all $\alpha<1$ there exists $(T,R)$ and $C_1>1$ such that for all $|t|<T$,
	\begin{equation*}
	\norm{\Phi_t^\D(z,\la)}_{\Aa_R} \leq C_1|z|^{-\frac{1}{2}-\frac{1-\alpha}{4}}.
	\end{equation*}
	We then estimate $F_t^\D$: let $\gamma\subset \Cc^*$ be a path from $z$ to $1$, use Equation \eqref{eqestimeeFannexe} of Appendix \ref{appendixF} and Equation \eqref{eqmetric} to get
	\begin{equation*}
	\norm{F_t^\D(z,\la)}_{\Aa_R} \leq C_2 \norm{F_t^\D(1,\la)}_{\Aa_R} \times \exp\left( \frac{(R-1)}{2} \int_{\gamma} \frac{|\sigma_t(\log |z|)|}{|z|} \right).
	\end{equation*}
	But $\sigma_t$ is uniformly bounded because so is the distance between the profile curve and the axis of a Delaunay surface. Moreover, the unitary frame at $z=1$ is also bounded. Hence the existence, for $R>1$ small enough, of a constant $C_3\geq 1$ such that
	\begin{equation*}
	\norm{F_t^\D(z,\la)}_{\Aa_R}\leq C_3 |z|^{-\frac{1-\alpha}{4}}.
	\end{equation*}
	We can now estimate the positive factor: for all $\alpha<1$ there exist $T>0$, $R>1$ and $C_4\geq 1$ such that for all $|t|<T$ and $|z|<1$
	\begin{equation*}
	\norm{B_t^\D(z,\la)}_{\Aa_R} \leq \norm{F_t^\D(z,\la)^{-1}}_{\Aa_R} \times \norm{\Phi_t^\D(z,\la)}_{\Aa_R}\leq C_4|z|^{\frac{\alpha}{2}-1}.
	\end{equation*}

	We then define
	\begin{equation*}
	\begin{array}{rccccc}
	\widetilde{\Phi}_t &:=& \left(\left(F_t^\D \right)^{-1}F_t\right)&\times&\left(B_t\left(B_t^\D\right)^{-1}\right) &= B_t^\D \left(\Phi_t^\D\right)^{-1}\Phi_t \left(B_t^\D\right)^{-1} \\
	&=:& \widetilde{F}_t&\times & \widetilde{B}_t &
	\end{array}
	\end{equation*}
	with $\widetilde{F}_t\in\LSU$ and $\widetilde{B}_t\in\LSLplusR$ and  thus have
	\begin{align*}
	\norm{\widetilde{\Phi}_t(z,\la) - \I}_{\Aa_R} &= \norm{B_t^\D(z,\la)\left( P_t(z,\la)-\I \right) \left(B_t^\D(z,\la)\right)^{-1}}_{\Aa_R}\\
	&\leq \norm{B_t^\D(z,\la)}_{\Aa_R}^2 \Oo(t,|z|^2)\\
	&\leq C|t||z|^\alpha.
	\end{align*}
	Let $n_k$ denote the seminorms
	\begin{equation*}
	\label{defseminorm}
	n_k(X) = \sum_{j=0}^{k} \norm{\frac{\partial^kX}{\partial\la^k}}_{\Aa_1}.
	\end{equation*}
	Apply Cauchy formula with $\la\in\partial\Aa_R$ to get
	\begin{equation*}
		n_k\left(\widetilde{\Phi}_t - \I\right)\leq c_k |t||z|^{\alpha}, \ \forall k\in\Nn
	\end{equation*}
	where $c_k>0$ are  uniform constants.
	But $\Uni(\widetilde{\Phi}_t) = \widetilde{F}_t = \left(F_t^\D\right)^{-1}F_t$ and Iwasawa decomposition is a $\C^1$-diffeomorphism, so $n_0\left(\widetilde{F}_t-\I\right)\leq C|t||z|^{\alpha}$ and $n_1\left(\widetilde{F}_t-\I\right)\leq C|t||z|^{\alpha}$. We then have \eqref{Cauchy0} and \eqref{Cauchy1}.
\end{proof}

The asymptotic behaviour of $\frac{\partial\widetilde{F}_t}{\partial\la}$ allows us to prove the convergence of immersions as stated in the first point of Theorem \ref{theorem}. The Sym-Bobenko formula for $\Rr^3$ implies that (we omit the index $t$)
\begin{align*}
i F(z,1) \frac{\partial (F^{-1}F^\D)}{\partial \la}(z,1)F^\D(z,1)^{-1}
&= i \frac{\partial F^\D}{\partial \la}(z,1) F^\D(z,1)^{-1} - i \frac{\partial F}{\partial \la}(z,1) F(z,1)^{-1} \\
&= f^\D(z) - f(z).
\end{align*}
We can then compute
\begin{align*}
\norm{f_t(z)-f_t^\D(z)}_{\Rr^3}^2 &= 4\det\left(f_t(z)-f_t^\D(z)\right)\\
&=-4\det \frac{\partial (F_t^{-1}F_t^\D)}{\partial \la}(z,1)\\
& \leq C_2^2t^2|z|^{2\alpha}.
\end{align*}
And then for all $\alpha<1$ there exist constants $\epsilon>0$, $T>0$ and $C>0$ such that for all $0<|z|<\epsilon$ and $|t|<T$,
\begin{equation}\label{eqconvimmersions}
\Vert f_t(z) - f_t^{\D}(z)\Vert _{\Rr^3} \leq C|t||z|^{\alpha}.
\end{equation}

To prove the third point of Theorem \ref{theorem}, use \eqref{PhitD} and note that $M_0=\I$. So the axis of $f_t^\D$ as $t\to 0$ is the same that the axis of the unperturbed Delaunay surface induced by $z^{A_t}$.

In order to prove that the surface is embedded, we will need the convergence of the normal maps:

\begin{proposition}\label{corconvnormales}
	For all $\alpha<1$ there exist constants $\epsilon>0$, $T>0$ and $C>0$ such that for all $0<|z|<\epsilon$ and $|t|<T$,
	\begin{equation*}
	\norm{\N_t(z)-\N_t^\D(z)}_{\Rr^3} \leq C|t||z|^{\alpha}
	\end{equation*}
\end{proposition}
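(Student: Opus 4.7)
The plan is to reduce the claim directly to the specialisation at $\la=1$ of estimate \eqref{Cauchy0} of Lemma \ref{asymptoticF}. Writing $\sigma = \begin{pmatrix}1&0\\0&-1\end{pmatrix}$ and using the definition \eqref{defN}, first express
\begin{equation*}
\N_t(z) - \N_t^\D(z) = \frac{-i}{2}\left( F_t(z,1)\,\sigma\, F_t(z,1)^{-1} - F_t^\D(z,1)\,\sigma\, F_t^\D(z,1)^{-1}\right).
\end{equation*}
Factoring $F_t = F_t^\D \widetilde{F}_t$ with $\widetilde{F}_t := (F_t^\D)^{-1} F_t \in \LSU$, exactly as in the proof of Lemma \ref{asymptoticF}, rewrite the right-hand side as
\begin{equation*}
\N_t(z) - \N_t^\D(z) = \frac{-i}{2}\, F_t^\D(z,1)\left(\widetilde{F}_t(z,1)\,\sigma\, \widetilde{F}_t(z,1)^{-1} - \sigma\right)F_t^\D(z,1)^{-1}.
\end{equation*}

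Next, I would invoke the $\SU_2$-invariance of the conjugation action on $\su_2\simeq \Rr^3$ recalled in Section \ref{su2model}: since $F_t^\D(z,1)\in\SU_2$, conjugation by it is an isometry of $\Rr^3$ in the matrix model, so
\begin{equation*}
\normR{\N_t(z)-\N_t^\D(z)} = \frac{1}{2}\normR{\widetilde{F}_t(z,1)\,\sigma\,\widetilde{F}_t(z,1)^{-1} - \sigma},
\end{equation*}
where the right-hand side is interpreted through the identification between the traceless anti-Hermitian part of the bracket and $\Rr^3$ (using \eqref{eqnormR3}).

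Finally, write $\widetilde{F}_t(z,1) = \I + E_t(z)$; estimate \eqref{Cauchy0} of Lemma \ref{asymptoticF}, specialised at $\la=1$, yields $\norm{E_t(z)}\leq C|t||z|^\alpha$ uniformly in $t,z$. A direct expansion gives
\begin{equation*}
\widetilde{F}_t(z,1)\,\sigma\,\widetilde{F}_t(z,1)^{-1} - \sigma = [E_t(z),\sigma] + \Oo\bigl(\norm{E_t(z)}^2\bigr),
\end{equation*}
whose operator norm is $\Oo(|t||z|^\alpha)$, yielding the claimed bound after absorbing a dimensional constant. The step requiring the most care is the second one: one must correctly package the $\SU_2$-isometry property so that the pointwise (at $\la=1$) bound on $\widetilde{F}_t - \I$ from Lemma \ref{asymptoticF} transfers to an $\Rr^3$-bound on the normals. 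The rest is linear-algebraic bookkeeping, and no additional analytic input beyond Lemma \ref{asymptoticF} is needed.
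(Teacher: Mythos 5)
Your argument is correct and is essentially the paper's own proof: the paper likewise factors out $F_t^\D(z,1)$, writes the middle term as $(\I+A)M(\I+\widetilde{A})-M = AM\widetilde{A}+AM+M\widetilde{A}$ with $A,\widetilde{A}=\Oo(t,|z|^\alpha)$ coming from Lemma \ref{asymptoticF} at $\la=1$, and concludes via the conjugation-invariance of the norm \eqref{eqnormR3}. Your expansion $\widetilde{F}_t\sigma\widetilde{F}_t^{-1}-\sigma=[E_t,\sigma]+\Oo(\norm{E_t}^2)$ is just a repackaging of that same computation, so no new input is needed.
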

\begin{proof}
	Use the definition of the normal maps in Equation \eqref{defN} to write
	\begin{align*}
	\N_t(z)-\N_t^\D(z) = \frac{-i}{2}F_t^\D(z,1)\left[ AM\widetilde{A} + AM + M\widetilde{A} \right]F_t^\D(z,1)^{-1}
	\end{align*}
	where
	\begin{equation*}
		A=F_t^\D(z,1)^{-1}F_t(z,1)-\I = \Oo(t,|z|^\alpha),
	\end{equation*}
	\begin{equation*}
		\widetilde{A} = F_t(z,1)^{-1}F_t^\D(z,1) - \I = \Oo(t,|z|^\alpha)
	\end{equation*}
	and
	\begin{equation*}
		M=\begin{pmatrix}
		1 & 0\\
		0 & -1
		\end{pmatrix}.
	\end{equation*}
	Use equation \eqref{eqnormR3} to get the conclusion.
\end{proof}

It remains to show that the surface is embedded if $t>0$.

\section{Embeddedness}
\label{sectionembeddedness}

We suppose in this section that $0<t<T$. The asymptotic behaviour of $f_t$ and the fact that $f_t^\D$ is an embedding for all $t$ allow us to show that $f_t$ is an embedding of a sufficiently small uniform neighbourhood of $z=0$ for $t$ small enough. We first give a general result of embeddedness and then apply this result to show that our surfaces are embedded.

\begin{proposition}
	\label{propgenericresultembeddedness}
	Let $f_n^\R : \Cc^* \longrightarrow \M_n^\R = f_n^\R(\Cc^*) \subset \Rr^3$ be a sequence of complete immersions with normal maps $\N_n^\R$ and an end at $z=0$. Suppose that for all $n$ there exists $r_n>0$ such that the tubular neighbourhood $\Tub_{r_n}\M_n^\R$ of $\M_n^\R$ is embedded. Suppose that for all $\epsilon>0$ there exists $0<\epsilon'<\epsilon$ such that for all $n\in\Nn$, $x\in\Ss_\epsilon$ and $y\in\Dd_{\epsilon'}^*$,
	\begin{equation}
	\label{eqfnR-fnR}
	\normR{f_n^\R(x)-f_n^\R(y)}>2r_n.
	\end{equation}
	Let $U^*\subset \Cc^*$ be a punctured neighbourhood of $z=0$ and $f_n:U^*\longrightarrow \Rr^3$ a sequence of immersions with normal maps $\N_n$ satisfying
	\begin{equation}
	\label{Hyp1}
	\supp{n\in\Nn} \ \frac{\norm{f_n(z) - f_n^\R(z)}_{\Rr^3}}{r_n} \tendsto{z\to 0} 0 
	\end{equation}
	and
	\begin{equation}
	\label{Hyp2}
	\supp{z\in U^*}\norm{\N_n(z) - \N_n^\R(z)}_{\Rr^3} \tendsto{n\to\infty}0.
	\end{equation}
	Then there exist $\epsilon'>0$ and $N\in\Nn$ such that for all $n\geq N$, $f_n$ is an embedding of $\Dd_{\epsilon'}^*$.
\end{proposition}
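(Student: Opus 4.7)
The plan is to realise $f_n$ as a normal graph over the embedded reference surface $\M_n^\R$. Writing
\begin{equation*}
f_n(z)=f_n^\R(h_n(z))+\tau_n(z)\,\N_n^\R(h_n(z))
\end{equation*}
for a reparametrisation $h_n:\Dd_{\epsilon'}^*\to\Cc^*$ and a normal height $\tau_n$ with $|\tau_n|<r_n$, the uniqueness of the normal decomposition in $\Tub_{r_n}\M_n^\R$ reduces injectivity of $f_n$ to injectivity of $h_n$, since $f_n(z_1)=f_n(z_2)$ would force both $h_n(z_1)=h_n(z_2)$ and $\tau_n(z_1)=\tau_n(z_2)$.

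The first step is the setup: hypothesis \eqref{Hyp1} provides $\epsilon_0>0$ independent of $n$ with $\normR{f_n(z)-f_n^\R(z)}<r_n/2$ on $\Dd_{\epsilon_0}^*$, so that $f_n$ lands in $\Tub_{r_n}\M_n^\R$ and $h_n=(f_n^\R)^{-1}\circ\pi_n\circ f_n$ is well defined. The next step is confinement: the triangle inequality gives
\begin{equation*}
\normR{f_n^\R(h_n(z))-f_n^\R(z)}\leq \normR{\pi_n(f_n(z))-f_n(z)}+\normR{f_n(z)-f_n^\R(z)}<r_n,
\end{equation*}
so the separation hypothesis \eqref{eqfnR-fnR} prevents $h_n(z)$ from reaching $\Ss_\epsilon$ for $z\in\Dd_{\epsilon_1}^*$, and continuity together with $h_n(z)\to 0$ as $z\to 0$ forces $h_n(\Dd_{\epsilon_1}^*)\subset \Dd_\epsilon^*$. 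In particular, $h_n$ is uniformly $C^0$-close to the identity as $\epsilon\to 0$.

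The main step is to show that $h_n$ is a local diffeomorphism for $n$ large, whence injective by the preceding $C^0$-estimate together with a winding-number argument on small loops around the puncture. Differentiating the graph relation yields
\begin{equation*}
df_n=(df_n^\R+\tau_n\,d\N_n^\R)\,dh_n+d\tau_n\cdot\N_n^\R,
\end{equation*}
so the orthogonal projection of $df_n$ onto $T_{f_n^\R(h_n(z))}\M_n^\R$ is $(df_n^\R+\tau_n\,d\N_n^\R)\,dh_n$. The operator $df_n^\R+\tau_n\,d\N_n^\R$ is invertible because $|\tau_n|<r_n$ and $r_n$ is bounded by the inverse of the principal curvatures, while hypothesis \eqref{Hyp2} together with $h_n$ near the identity gives $\N_n(z)\approx \N_n^\R(h_n(z))$, so the tangential projection of $df_n$ remains an isomorphism. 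This makes $dh_n$ invertible, hence $h_n$ is a local diffeomorphism; its $C^0$-closeness to the identity and degree one on small loops around $z=0$ then give that $h_n$ is a homeomorphism onto its image, and the conclusion follows.

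The main obstacle is the quantitative control in the third step. The comparison $\N_n^\R(h_n(z))\approx\N_n^\R(z)$ needs uniform continuity of the reference normal map along the end as $n$ varies, which is not immediate if the geometry of $\M_n^\R$ degenerates (as it does in the catenoidal case). Moreover the inverse of $df_n^\R+\tau_n\,d\N_n^\R$ must be uniformly bounded, which combines the tubular-radius hypothesis with careful control of the possibly degenerating conformal factor of $f_n^\R$ along the end.
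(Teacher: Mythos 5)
Your overall strategy (project $f_n$ onto $\M_n^\R$ through the embedded tube and show the resulting map is a degree-one local diffeomorphism) is the same as the paper's, but the two places you flag as ``obstacles'' are exactly where the proof has to be made, and as written they are genuine gaps. First, the local-diffeomorphism step reduces, as you note, to $\left\langle \N_n(z),\N_n^\R(h_n(z))\right\rangle>0$, and Hypothesis \eqref{Hyp2} only compares $\N_n(z)$ with $\N_n^\R(z)$; you leave the comparison of $\N_n^\R(h_n(z))$ with $\N_n^\R(z)$ unresolved. This is not a matter of ``uniform continuity of the normal map'', which indeed fails as the geometry degenerates; the paper closes it by a scale-invariant argument: the embeddedness of $\Tub_{r_n}\M_n^\R$ gives $\norm{d\eta_n^\R}\leq 1/r_n$ for the Gauss map $\eta_n^\R$ of $\M_n^\R$, the straight segment from $f_n(z)$ to $f_n^\R(z)$ lies in $\Tub_{r_n/2}\M_n^\R$ where $\norm{d\pi_n}\leq 2$, so its projection is a path in $\M_n^\R$ of length $<r_n$ joining the two base points, whence $\normR{\N_n^\R(h_n(z))-\N_n^\R(z)}<1$; adding the \eqref{Hyp2} term keeps the total below $\sqrt{2}$, which is enough for the positive inner product. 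Without some such estimate your invertibility of the tangential projection of $df_n$ is unsupported.

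Second, your globalisation is not justified. The confinement estimate only gives $\normR{f_n^\R(h_n(z))-f_n^\R(z)}<r_n$, an \emph{extrinsic} bound at scale $r_n$; it does not imply that $h_n$ is $C^0$-close to the identity in the $z$-coordinate, and in the degenerating situations the proposition is designed for (catenoidal necks of size comparable to $r_n$) coordinate closeness can simply fail, so the ``winding number on small loops plus closeness to the identity'' argument has no basis, and in any case a local diffeomorphism of a punctured disk needs a properness statement before degree or sheet-counting makes sense. The paper supplies both missing ingredients: properness of the restricted projection map follows from the separation hypothesis \eqref{eqfnR-fnR} (a limit point on $\Ss_\epsilon$ would contradict it), making it a covering map, and the covering is shown to be one-sheeted by homotoping $\varphi_n(\gamma)$ to $f_n^\R(\gamma)$ inside $\M_n^\R$ via the projected straight-line homotopy in the tube and using that $f_n^\R$ is an embedding. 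You should replace the $C_0$-closeness/winding argument by this proper-covering-plus-homotopy argument (or an equivalent), since it is carried out entirely in $\M_n^\R$ and survives the degeneration of the surfaces.
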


\begin{proof}
	Let us split the proof in several steps.
	\begin{itemize}
		\item \textit{Claim $1$}: there exists $\epsilon>0$ such that the map
		\begin{align*}
		\begin{array}{ccccc}
		\varphi_n & : & \Dd_\epsilon^* & \longrightarrow & \M_n^\R \\
		& & z & \longmapsto & \pi_n\circ f_n(z) \\
		\end{array}
		\end{align*}
		(where $\pi_n$ is the projection from $\Tub_{r_n} \M_n^\R$ onto $\M_n^\R$) is well-defined and satisfies
		\begin{equation}
		\label{eqphin-fnR}
		\norm{\varphi_n(z) - f_n^\R(z)}_{\Rr^3} < r_n
		\end{equation}
		for all $z\in\Dd_\epsilon^*$.
	\end{itemize}
		
		To prove this first claim, use Hypothesis \eqref{Hyp1}: there exists $\epsilon>0$ such that for all $n\in\Nn$ and $z\in\Dd_\epsilon^*$
		\begin{equation}
		\label{eqfn-fnR}
		\norm{f_n(z) - f_n^\R(z)}_{\Rr^3} < \frac{r_n}{2}.
		\end{equation}
		So $f_n(\Dd_\epsilon^* )\subset \Tub_{\frac{r_n}{2}}\M_n^\R$ and $\varphi_n$ is well-defined. Moreover, using \eqref{eqfn-fnR} and the triangle inequality, for all $z\in\Dd_{\epsilon}^*$
		\begin{equation*}
		\norm{\varphi_n(z) - f_n^\R(z)}_{\Rr^3} \leq \norm{\varphi_n(z) - f_n(z)}_{\Rr^3} + \norm{f_n(z) - f_n^\R(z)}_{\Rr^3} <r_n
		\end{equation*}
		and Equation \eqref{eqphin-fnR} holds.
		We fix $\epsilon$ and $\epsilon'$ so that Equation \eqref{eqfnR-fnR} is satisfied.
		
	\begin{itemize}
		\item \textit{Claim 2}: there exists $N\in\Nn$ such that for all $n\geq N$, $\varphi_n$ is a local diffeomorphism on $\Dd_\epsilon^*$.
	\end{itemize}
	Let $z\in \Dd_\epsilon^*$. In order to show that $\varphi_n$ is a local diffeomorphism, we show that
	\begin{equation}
	\label{eqNphiscalNn}
	\left\langle \N_{\varphi_n}(z), \N_n(z) \right\rangle > 0
	\end{equation}
	where $\N_{\varphi_n}$ is defined by
	\begin{align*}
	\begin{array}{ccccc}
	\N_{\varphi_n} & : & \Dd_\epsilon^* & \longrightarrow & \Ss^2 \subset \Rr^3 \\
	& & z & \longmapsto & \eta_n^\R(\varphi_n(z)) \\
	\end{array}
	\end{align*}
	and $\eta_n^\R$ is the Gauss map of $\M_n^\R$. First, let $\gamma\subset \M_n^\R$ be a path joining $\varphi_n(z)$ to $f_n^\R(z)$. Using the fact that $\Tub_{r_n}\M_n^\R$ is embedded, one has
	\begin{equation*}
		\norm{d\eta_n^\R}\leq \frac{1}{r_n}
	\end{equation*}
	and
	\begin{equation*}
	\left\Vert \N_{\varphi_n}(z) - \N_n^\R(z) \right\Vert_{\Rr^3} \leq \frac{1}{r_n} \times |\gamma|.
	\end{equation*}
	Let $\sigma(t) = (1-t)f_n(z) + tf_n^\R(z)$, $t\in\left[0,1\right]$. Then,
	\begin{equation}\label{eqsigmatub}
	\norm{\sigma(t) - f_n^\R(z)}_{\Rr^3} \leq (1-t) \norm{f_n(z) - f_n^\R(z)}_{\Rr^3} < \frac{r_n}{2}
	\end{equation}
	because of Equation \eqref{eqfn-fnR}. Let $\gamma = \pi_n\circ\sigma$.
	Note that Equation \eqref{eqsigmatub} implies that $\sigma \subset \Tub_{\frac{r_n}{2}}\M_n^\R$ and restricting $\pi_n$ to $\Tub_{\frac{r_n}{2}}\M_n^\R$ gives
	\begin{equation*}
		\norm{d\pi_n}\leq \frac{r_n}{r_n-\frac{r_n}{2}}=2
	\end{equation*}
	and thus $|\gamma|<r_n$. Hence,
	\begin{equation*}
		\norm{\N_{\varphi_n}(z) - \N_n^\R(z)} < 1.
	\end{equation*}
	Use Hypothesis \eqref{Hyp2} to choose a uniform $N\in\Nn$ such that for all $n\geq N$, 
	\begin{equation*}
	\norm{\N_{\varphi_n}(z) - \N_n(z)} \leq \norm{\N_{\varphi_n}(z) - \N_n^\R(z)} + \norm{ \N_n^\R(z) - \N_n(z)}  < \sqrt{2},
	\end{equation*}
	which proves Equation \eqref{eqNphiscalNn} and this second claim.  We fix such $N$  and $n$.
		
	\begin{itemize}
		\item \textit{Claim 3}: the restriction
		\begin{equation*}
		\function{\widetilde{\varphi}_n}{\varphi_n^{-1}\left(\varphi_n(\Dd_{\epsilon'}^*)\right)\cap \Dd_\epsilon^*}{\varphi_n\left( \Dd_{\epsilon'}^* \right)}{z}{\varphi_n(z)}
		\end{equation*}
		is a covering map.
	\end{itemize}
	
	It sufices to show that $\widetilde{\varphi}_n$ is a proper map. Let $(x_i)_{i\in\Nn}\subset \varphi_n^{-1}\left(\varphi_n(\Dd_{\epsilon'}^*)\right)\cap \Dd_\epsilon^*$ such that $\left(\widetilde{\varphi}_n(x_i)\right)_{i\in\Nn}$ converges to $p\in\varphi_n\left(\Dd_{\epsilon'}^*\right)$. Then $(x_i)_i$ converges to $x\in\overline{\Dd}_\epsilon$. Using Equation \eqref{eqphin-fnR} and the fact that $f_n^\R$ has an end at $0$, $x\neq 0$. If $x\in\partial\Dd_\epsilon$, denoting $\widetilde{x}\in\Dd_{\epsilon'}^*$ such that $\widetilde{\varphi}_n(\widetilde{x}) = p$, one has
	\begin{equation*}
	\normR{f_n^\R(x) - f_n^\R(\widetilde{x})} < \normR{f_n^\R(x) - p} + \normR{f_n^\R(\widetilde{x})-\widetilde{\varphi}_n(\widetilde{x})} < 2r_n
	\end{equation*}
	which contradicts the definition of $\epsilon'$. Thus, $\widetilde{\varphi}_n$ is a proper local diffeomorphism between locally compact spaces, i.e. a covering map.

	\begin{itemize}
		\item \textit{Claim 4}: this covering map is one-sheeted.
	\end{itemize}
	
	To compute the number of sheets, let $\gamma : [0,1] \longrightarrow \Dd_{\epsilon'}^*$ be a loop of winding number $1$ around $0$, $\Gamma = f_n^\R(\gamma)$ and $\widetilde{\Gamma} = \widetilde{\varphi}_n(\gamma) \subset\M_n^\R$ and let us construct a homotopy between $\Gamma$ and $\widetilde{\Gamma}$. Let
	\begin{equation*}
	\function{\sigma_t}{[0,1]}{\Rr^3}{s}{(1-s)\widetilde{\Gamma}(t) + s \Gamma(t).}
	\end{equation*}
	For all $t,s\in[0,1]$,
	\begin{align*}
	\normR{\sigma_t(s) - \Gamma(t)} < r_n
	\end{align*}
	which implies that $\sigma_t(s) \in \Tub_{r_n}\M_n^\R$ because $\M_n^\R$ is complete. One can thus define the following homotopy between $\Gamma$ and $\widetilde{\Gamma}$
	\begin{equation*}
	\function{H}{[0,1]^2}{\M_n^\R}{(s,t)}{\pi_n\circ \sigma_t(s)}
	\end{equation*}
	where $\pi_n$ is the projection from $\Tub_{r_n}\M_n^\R$ to $\M_n^\R$. Using the fact that $f_n^\R$ is an embedding, the degree of $\Gamma$ is one, and the degree of $\widetilde{\Gamma}$ is also one. Hence, $\widetilde{\varphi}_n$ is one-sheeted.
	
	\begin{itemize}
		\item \textit{Conclusion}: the map $\widetilde{\varphi}$ is a diffeomorphism, so $f_n\left(\Dd_{\epsilon'}^*\right)$ is a graph over $\M_n^\R$ contained in its embedded tubular neighbourhood and $f_n\left(\Dd_{\epsilon'}^*\right)$ is thus embedded.
	\end{itemize}
\end{proof}

We can now apply Proposition \ref{propgenericresultembeddedness} to each case. Let $(t_n)$ be any sequence in $(-T,T)$ such that $t_n \to 0$.
\begin{itemize}
	\item If $r\geq s$, we set $\widehat{f}_n^\R = f_{t_n}^\D$ and $\widehat{f}_n = f_{t_n}$. We aim to apply Proposition \ref{propgenericresultembeddedness} on $\widehat{f}_n^\R$ and $\widehat{f}_n$. The tubular radius $r_n$ is of the order of $4 t_n$ and Hypothesis \eqref{eqfnR-fnR} is satisfied because $\widehat{f}_n^\R$ tends to an immersion of a sphere. Equation \eqref{eqconvimmersions} and Proposition \ref{corconvnormales} ensure that Hypotheses \eqref{Hyp1} and \eqref{Hyp2} hold.
	
	\item If $r\leq s$, we set $\widehat{f}_n^\R = \frac{1}{t_n} f_{t_n}^\D$ and $\widehat{f}_n =\frac{1}{t_n} f_{t_n}$. We aim to apply Proposition \ref{propgenericresultembeddedness} on $\widehat{f}_n^\R$ and $\widehat{f}_n$. The tubular radius $r_n$ is of the order of $4$ and Hypothesis \eqref{eqfnR-fnR} is satisfied because $\widehat{f}_n^\R$ tends to an immersion of a catenoid (see \cite{minoids}). Equation \eqref{eqconvimmersions} and Proposition \ref{corconvnormales} ensure that Hypotheses \eqref{Hyp1} and \eqref{Hyp2} hold.
\end{itemize}
The second point of our theorem is then proved.
\appendix

\section{Iwasawa extended}\label{SectionIwaExtended}

In this section, we note $\Aa_{\frac{1}{R},1} = \left\{ \la\in\Cc \ : \ \frac{1}{R}<|\la|<1 \right\}$.

\begin{lemma}\label{lemmaFanneau}
	Let $F:\Aa_{\frac{1}{R},1}\longrightarrow \SL_2\Cc$ be a holomorphic map that can be continuously extended to the circle $\Aa_1$ and such that $F(\la)\in\SU_2$ for all $\la\in\Aa_1$. Then $F$ holomorphically extends to $\Aa_R$ into a map that satisfies
	\begin{equation}\label{eqprolongementF}
	^t{\overline{F\left(\frac{1}{\conj{\la}}\right)}} = F(\la)^{-1} \qquad \forall \la\in\Aa_R.
	\end{equation}
\end{lemma}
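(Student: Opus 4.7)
The proof plan is a Schwarz reflection argument. The unitarity condition on $\Aa_1$ gives the identity $F(\la)^{-1} = {^t\overline{F(\la)}}$ for $|\la|=1$, which I will use to extend $F$ by reflection across the unit circle.

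First, define on the outer annulus $\{1<|\la|<R\}$ the map
\begin{equation*}
\widetilde{F}(\la) = \left({^t\overline{F\left(\frac{1}{\conj{\la}}\right)}}\right)^{-1}.
\end{equation*}
Since $|\la|>1$ implies $1/\conj{\la}\in\Aa_{\frac{1}{R},1}$, the argument of $F$ lies in its domain of definition. The map $\la\mapsto 1/\conj{\la}$ is anti-holomorphic, and composing an anti-holomorphic change of variable with complex conjugation produces a holomorphic dependence on $\la$; transposition and matrix inversion (legal since $F\in\SL_2\Cc$) preserve holomorphicity, so $\widetilde{F}$ is holomorphic on $\{1<|\la|<R\}$ with values in $\SL_2\Cc$.

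Next, I check that $F$ and $\widetilde{F}$ agree on $\Aa_1$ so that the glued map is continuous. For $|\la|=1$ we have $1/\conj{\la}=\la$, so $\widetilde{F}(\la) = ({^t\overline{F(\la)}})^{-1} = (F(\la)^\ast)^{-1}$, which equals $F(\la)$ precisely because $F(\la)\in\SU_2$. Both $F$ (by hypothesis) and $\widetilde{F}$ (by its definition via the continuous extension of $F$ to $\Aa_1$) are continuous up to $\Aa_1$, so the function
\begin{equation*}
\mathcal{F}(\la) = \begin{cases} F(\la) & \frac{1}{R}<|\la|\leq 1,\\ \widetilde{F}(\la) & 1\leq|\la|<R\end{cases}
\end{equation*}
is well-defined and continuous on $\Aa_R$, and holomorphic on $\Aa_R\setminus\Aa_1$.

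To promote continuity into holomorphicity across $\Aa_1$, I invoke Morera's theorem (or Painlevé's removable singularity theorem for analytic arcs): a continuous matrix-valued function on $\Aa_R$ that is holomorphic off a real-analytic curve is holomorphic everywhere. Applying this entrywise yields that $\mathcal{F}$ is holomorphic on all of $\Aa_R$, and by construction it restricts to $F$ on $\Aa_{\frac{1}{R},1}$.

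Finally, the symmetry identity \eqref{eqprolongementF} is exactly the defining relation for $\widetilde{F}$ on $\{|\la|>1\}$, and it propagates to $\{|\la|<1\}$ by taking transpose-conjugate and inverting both sides (using $({^t\overline{A}})^{-1}={^t\overline{A^{-1}}}$). Both sides of \eqref{eqprolongementF} are holomorphic in $\la$ and agree on the open set $\{|\la|>1\}$, so by analytic continuation they agree on all of $\Aa_R$. No step is a serious obstacle; the only thing that requires a moment of care is verifying the holomorphic dependence of $\widetilde{F}$ under the reflection $\la\mapsto 1/\conj{\la}$ combined with conjugation, and the entrywise application of Morera's theorem for the gluing.
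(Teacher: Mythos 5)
Your argument is correct, and it implements the reflection in a slightly different way from the paper. You define the reflected matrix directly on the outer annulus by $\widetilde{F}(\la)=\bigl({}^t\overline{F(1/\conj{\la})}\bigr)^{-1}$, check via unitarity that it matches $F$ on $\Aa_1$, and glue with a Morera/Painlev\'e argument, after which \eqref{eqprolongementF} holds by construction on $\{|\la|>1\}$ and propagates to all of $\Aa_R$ by the identity theorem (or by the transpose--conjugate--inverse symmetry of the relation). The paper instead reduces to the classical scalar Schwarz reflection principle: it forms the auxiliary matrix with entries $F_{11}+F_{22}$, $F_{12}-F_{21}$, $i(F_{12}+F_{21})$, $i(F_{11}-F_{22})$, which are real-valued on $\Aa_1$ precisely because $F(\la)\in\SU_2$ there, reflects each entry across the circle, and then recovers \eqref{eqprolongementF} from the resulting coefficient relations together with $\det F=1$. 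Your route works at the matrix level and yields the symmetry identity with no extra computation, at the price of invoking the continuity-gluing form of the reflection principle entrywise; the paper's route needs only the textbook real-boundary-value Schwarz reflection but must do a small algebraic translation at the end. Both hinge on the same two ingredients (continuity of $F$ up to $\Aa_1$ and $F^{-1}={}^t\overline{F}$ there), so the proofs are equally rigorous; yours is a legitimate alternative.
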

\begin{proof}
	Apply Schwarz reflexion principle on each coefficient of the matrix
	\begin{equation*}
	\widetilde{F}(\la) = \begin{pmatrix}
	F_{11}(\la) + F_{22}(\la)& F_{12}(\la) - F_{21}(\la)\\
	i\left( F_{12}(\la) + F_{21}(\la) \right) & i\left( F_{11}(\la) - F_{22}(\la) \right)
	\end{pmatrix}
	\end{equation*}
	where $F_{ij}$ denote the entries of $F$. The fact that $F(\la)\in\SU_2$ for all $\la\in\Aa_1$ ensures that $\Im \widetilde{F} = 0$ on $\Aa_1$. Thus, $\widetilde{F}$ holomorphically extends to $\Aa_R$ and satisfies for all $\la\in\Aa_R$ 
	\begin{equation*}
	{\widetilde{F}\left( \frac{1}{\conj{\la}} \right)} = \conj{\widetilde{F}(\la)}.
	\end{equation*}
	Hence, $F$ holomorphically extends to $\Aa_R$ and satisfies
	\begin{equation*}\label{eqcoefsF}
	F_{11}\left(\frac{1}{\conj{\la}}\right) = \conj{F_{22}(\la)}, \qquad F_{12}\left(\frac{1}{\conj{\la}}\right) = -\conj{F_{21}(\la)}
	\end{equation*}
	which implies Equation \eqref{eqprolongementF} because $F(\la)\in\SL_2\Cc$.
\end{proof}

\begin{corollary}\label{IwasawaExtended}
	Let $\Phi:\Aa_R\longrightarrow\SL_2\Cc$ be a holomorphic map and let $FB$ be the Iwasawa decomposition of its restriction to $\Aa_1$. Then $F$ holomorphically extends to $\Aa_R$, satisfies Equation \eqref{eqprolongementF}, and $B$ holomorphically extends to $\D_R$.
\end{corollary}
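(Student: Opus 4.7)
The plan is to apply Lemma \ref{lemmaFanneau} to extend the unitary factor $F$ of the Iwasawa decomposition, and then use this extension to extend the positive factor $B$. First, write $\Phi|_{\Aa_1} = FB$ with $F\in\LSU$ and $B\in\LSLplusR$. By definition $B$ already extends holomorphically to $\Dd_1$; since $\det B \equiv 1$ on $\Aa_1$ and $\det B$ is holomorphic, the identity principle forces $\det B \equiv 1$ on $\Dd_1$, so $B^{-1}$ is also holomorphic on $\Dd_1$. This is the one sanity check that needs to be made before anything else.

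Next, I would work on the half-annulus $\Aa_{1/R,1} = \Dd_1 \cap \Aa_R$ and define
\begin{equation*}
  \widetilde F := \Phi B^{-1}.
\end{equation*}
Both factors are holomorphic there ($\Phi$ because $\Aa_{1/R,1}\subset\Aa_R$, and $B^{-1}$ by the observation above), so $\widetilde F$ is holomorphic on $\Aa_{1/R,1}$. It extends continuously to the outer boundary $\Aa_1$, where it agrees with the Iwasawa factor $F$ and therefore takes values in $\SU_2$. Lemma \ref{lemmaFanneau} then applies directly and produces the desired holomorphic extension of $\widetilde F$ (which is the sought-after extension of $F$) to all of $\Aa_R$, satisfying the reflection identity \eqref{eqprolongementF}.

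For the positive factor, introduce
\begin{equation*}
  \widetilde B := \widetilde F^{-1}\Phi \qquad \text{on } \Aa_R,
\end{equation*}
which is holomorphic on $\Aa_R$. The defining relation $\widetilde F = \Phi B^{-1}$ on $\Aa_{1/R,1}$ rewrites as $\widetilde B = B$ on $\Aa_{1/R,1}$, so $\widetilde B$ and the original $B$ coincide on the overlap $\Dd_1\cap\Aa_R = \Aa_{1/R,1}$. Gluing them produces a well-defined holomorphic map on $\Dd_1 \cup \Aa_R = \Dd_R$, which is the required extension of $B$.

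I do not foresee a serious obstacle: the construction is essentially bookkeeping once the lemma is in hand. The only step requiring care is verifying that $\widetilde F$ meets the hypotheses of Lemma \ref{lemmaFanneau}, but this is handled automatically by writing $\widetilde F$ as an explicit product of two holomorphic factors and tracing boundary values back to the Iwasawa decomposition on $\Aa_1$.
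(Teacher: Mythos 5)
Your proposal is correct and follows essentially the same route as the paper: extend $F=\Phi B^{-1}$ holomorphically to $\Aa_{\frac{1}{R},1}$, apply Lemma \ref{lemmaFanneau} to extend it to $\Aa_R$ with the reflection identity \eqref{eqprolongementF}, and then recover $B=F^{-1}\Phi$ to extend the positive factor to $\D_R$. The extra checks you include (holomorphy of $B^{-1}$ on the unit disk and agreement of the two definitions of $B$ on the overlap) are exactly the bookkeeping the paper leaves implicit.
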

\begin{proof}
	Write $F=\Phi B^{-1}$ to holomorphically extend $F$ to $\Aa_{\frac{1}{R},1}$. Apply Lemma \ref{lemmaFanneau} to holomorphically extend $F$ to $\Aa_R$, and write $B=F^{-1}\Phi$ to holomorphically extend $B$ to $\D_R$.
\end{proof}

\section{Derivative of the monodromy}
\label{appendixderiveemonod}

The following proposition, used in Section \ref{sectionzap}, is derived from Proposition 8 in \cite{nnoids}.

\begin{proposition}
	\label{propderiveemonod}
	Let $\xi_t$ be a $\C^1$ family of matrix-valued $1$-forms on a Riemann surface $\Sigma$, defined for $t$ in a neighbourhood of $t_0\in\Rr$. Let $\widetilde{\Sigma}$ be the universal cover of $\Sigma$. Fix a point $z_0$ in $\Sigma$ and let $\widetilde{z}_0$ be a lift of $z_0$ to $\widetilde{\Sigma}$. Let $\Phi_t$ be a continuous family of solutions of $d\Phi_t = \Phi_t\xi_t$ on $\widetilde{\Sigma}$ such that for all $t$,
	\begin{equation*}
		\label{eqMonodcommute}
		\left[ \M(t_0), \Phi_{t_0}(z_0)\Phi_t(z_0)^{-1} \right] = 0,
	\end{equation*}
	where $\M(t)$ is the monodromy of $\Phi_t$ with respect to some $\gamma\in \pi_1(\Sigma,z_0)$. Let $\widetilde{\gamma}$ be the lift of $\gamma$ to $\widetilde{\Sigma}$ such that $\widetilde{\gamma}(0)=\widetilde{z}_0$. Then $\M$ is differentiable at $t_0$ and 
	\begin{equation*}
	\M'(t_0) = \left( \int_{\gamma} \Phi_{t_0} \frac{\partial \xi_t}{\partial t}\mid_{\substack{t=t_0}} \Phi_{t_0}^{-1} \right) \times \M(t_0).
	\end{equation*}
	In particular, if $\M(t_0)=\pm \I$ or if $\Phi_t(z_0)$ is constant, then \eqref{eqMonodcommute} is satisfied.
\end{proposition}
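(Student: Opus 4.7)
My plan is to separate the $t$-dependence of $\M(t)$ into a differentiable piece coming from $\xi_t$ and a merely continuous piece coming from the initial value $\Phi_t(\widetilde z_0)$, and then to use the commutator hypothesis to absorb the continuous piece into $\M(t_0)$ so that only the differentiable piece contributes to $\M'(t_0)$. Setting $U_t(z):=\Phi_t(z)\Phi_{t_0}(z)^{-1}$, a direct computation from $d\Phi_t=\Phi_t\xi_t$ and $d\Phi_{t_0}=\Phi_{t_0}\xi_{t_0}$ yields
\begin{equation*}
dU_t=U_t\,\alpha_t,\qquad \alpha_t:=\Phi_{t_0}(\xi_t-\xi_{t_0})\Phi_{t_0}^{-1},
\end{equation*}
where $\alpha_t$ is $\C^1$ in $t$, vanishes at $t=t_0$ and has $t$-derivative $\Phi_{t_0}\,\partial_t\xi_t|_{t_0}\,\Phi_{t_0}^{-1}$ there. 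Denoting $\widetilde z_1:=\widetilde\gamma(1)$ and letting $E_t$ be the solution along $\widetilde\gamma$ of $dE_t=E_t\alpha_t$ with $E_t(\widetilde z_0)=\I$, one has $U_t(\widetilde z_1)=U_t(\widetilde z_0)E_t(\widetilde z_1)$. Since the initial value of $E_t$ does not depend on $t$ and $\alpha_t$ is $\C^1$, classical smooth dependence of ODE solutions on a $\C^1$ parameter yields $E_{t_0}(\widetilde z_1)=\I$ and the first-order expansion
\begin{equation*}
E_t(\widetilde z_1)=\I+(t-t_0)\int_{\gamma}\Phi_{t_0}\,\partial_t\xi_t|_{t_0}\,\Phi_{t_0}^{-1}+o(t-t_0).
\end{equation*}

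Writing the defining relation $\Phi_t(\widetilde z_1)=\M(t)\Phi_t(\widetilde z_0)$ at $t$ and at $t_0$ and substituting $\Phi_t=U_t\Phi_{t_0}$ gives
\begin{equation*}
\M(t)=U_t(\widetilde z_1)\,\M(t_0)\,U_t(\widetilde z_0)^{-1}=U_t(\widetilde z_0)\,E_t(\widetilde z_1)\,\M(t_0)\,U_t(\widetilde z_0)^{-1}.
\end{equation*}
Since $U_t(\widetilde z_0)^{-1}=\Phi_{t_0}(z_0)\Phi_t(z_0)^{-1}$, the hypothesis is exactly $[\M(t_0),U_t(\widetilde z_0)^{-1}]=0$, so the order-zero term of the right-hand side is $U_t(\widetilde z_0)\M(t_0)U_t(\widetilde z_0)^{-1}=\M(t_0)$; continuity of $U_t(\widetilde z_0)$ at $t_0$ with $U_{t_0}(\widetilde z_0)=\I$ then reduces the order-one term to $(t-t_0)\bigl(\int_\gamma\Phi_{t_0}\,\partial_t\xi_t|_{t_0}\,\Phi_{t_0}^{-1}\bigr)\M(t_0)+o(t-t_0)$, yielding the announced formula for $\M'(t_0)$. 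The last assertion is immediate: if $\M(t_0)=\pm\I$ it is central, and if $\Phi_t(z_0)$ is independent of $t$ then $\Phi_{t_0}(z_0)\Phi_t(z_0)^{-1}=\I$; in either case the commutator vanishes automatically.

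The main obstacle is precisely that continuity of $\Phi_t$ in $t$ does not imply differentiability at $t_0$ of the initial value $U_t(\widetilde z_0)$, so one cannot differentiate $U_t(\widetilde z_1)\M(t_0)U_t(\widetilde z_0)^{-1}$ factor by factor. The commutator hypothesis is the device that renders this undifferentiable factor innocuous by passing it through $\M(t_0)$, so that the derivative of $\M$ is controlled entirely by the $\C^1$ factor $E_t$, to which classical ODE-with-parameters theory applies.
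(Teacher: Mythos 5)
Your proof is correct, but it follows a genuinely different route from the paper's. The paper first normalizes the initial condition, setting $\widetilde{\Phi}_t(z)=\Phi_t(z_0)^{-1}\Phi_t(z)$ so that $\widetilde{\Phi}_t(z_0)=\I$, then appeals to Proposition 5 of \cite{nnoids} (which handles exactly this normalized case) to differentiate $\widetilde{\M}(t)$, and finally conjugates back using $\M(t)=\Phi_t(z_0)\widetilde{\M}(t)\Phi_t(z_0)^{-1}$ together with the commutator hypothesis, which allows one to write $\M(t_0)=\Phi_t(z_0)\widetilde{\M}(t_0)\Phi_t(z_0)^{-1}$ for every $t$ and then take a difference quotient. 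You instead compare the two frames directly via $U_t=\Phi_t\Phi_{t_0}^{-1}$, peel off a normalized factor $E_t$ along $\widetilde\gamma$, and supply the ODE-with-parameter argument in full rather than by citation; the commutator hypothesis then enters at the same logical point, namely to kill the merely continuous conjugating factor $U_t(\widetilde z_0)$ so that only the $\C^1$ factor contributes to the derivative. The tradeoff is simply self-containedness versus brevity: your version does not depend on the external reference but repackages its content, while the paper's reduction is shorter by delegating the differentiability-of-solutions step. Both arguments hinge in the same way on the commutator hypothesis to pass the non-differentiable term through $\M(t_0)$, so the structural insight is shared even though the decompositions (yours by $\Phi_{t_0}$ on the right, the paper's by $\Phi_t(z_0)$ on the left) are not the same.
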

\begin{proof}
	Proposition 8 in \cite{nnoids} is proved in the case where $\Phi_t(z_0)$ is constant. Let $\widetilde{\Phi}_t(z) = \Phi_t(z_0)^{-1}\Phi_t(z)$, so that $d\widetilde{\Phi}_t = \widetilde{\Phi}_t\xi_t$ and $\widetilde{\Phi}_t(z_0) = \mathrm{I}_n$. Let $\widetilde{\M}(t)$ be the monodromy of $\widetilde{\Phi}_t$ along $\gamma$. Then Proposition 5 of \cite{nnoids} applies and 
	\begin{equation*}
	\widetilde{\M}'(t_0) = \left( \int_{\gamma} \widetilde{\Phi}_{t_0}(z) \frac{\partial \xi_t(z)}{\partial t}\mid_{\substack{t=t_0}} \widetilde{\Phi}_{t_0}(z)^{-1} \right) \times \widetilde{\M}(t_0).
	\end{equation*}
	On the other hand, 
	\begin{equation*}
	\M(t) = \Phi_t(z_0)\widetilde{\M}(t) \Phi_t(z_0)^{-1}
	\end{equation*}
	and because of Equation \eqref{eqMonodcommute},
	\begin{equation*}
	\M(t_0) = \Phi_t(z_0)\widetilde{\M}(t_0) \Phi_t(z_0)^{-1}.
	\end{equation*}
	Thus, $\M$ is differentiable at $t_0$ and 
	\begin{equation*}
	\M'(t_0) = \Phi_{t_0}(z_0)\widetilde{\M}'(t_0)\Phi_{t_0}(z_0)^{-1}
	\end{equation*}
	which proves the proposition.
\end{proof}

\section{A control formula on the unitary frame}
\label{appendixF}

The following proposition is used in Section \ref{sectionconvergence}.
\begin{proposition}
	Let $\left( \Sigma, \xi, z_0, \Phi_{z_0} \right)$ be a set of untwisted DPW data, holomorphic for $\la\in\Aa_R$ with $R\geq 1$. Then for all $z_1,z_2\in\Sigma$ and $\gamma\subset \Sigma$ joining $z_1$ to $z_2$,
	\begin{equation*}
	\norm{F(z_1,\la)}_{\Aa_R} \leq  C \norm{F(z_2,\la)}_{\Aa_R} \times \exp\left( (R-1) \int_{\gamma} \rho^2(w) |a_{-1}(w)||dw| \right)
	\end{equation*}
	where $C$ is a uniform positive constant, $a_{-1}(z)dz$ is the $\la^{-1}$ factor of $\xi$ and $\rho(z)$ is the upper-left entry of $\Pos(\Phi)(z,0)$.
\end{proposition}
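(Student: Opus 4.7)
The strategy is to control the Maurer--Cartan form $\alpha := F^{-1}dF$ uniformly in $\lambda \in \Aa_R$ and then integrate the ODE $dF = F\alpha$ along $\gamma$. Differentiating the Iwasawa decomposition $\Phi = FB$ and using $d\Phi = \Phi\xi$, I obtain
\begin{equation*}
  \alpha \;=\; B\xi B^{-1} \;-\; (dB)\,B^{-1}.
\end{equation*}
By Corollary \ref{IwasawaExtended}, $B$ extends holomorphically to $\Dd_R$, so $(dB)B^{-1}$ is regular at $\la = 0$; the only $\la^{-1}$ contribution comes from $B_0\xi_{-1}B_0^{-1}$. With $B_0 = \bigl(\begin{smallmatrix}\rho & \star \\ 0 & \rho^{-1}\end{smallmatrix}\bigr)$ and $\xi_{-1} = a_{-1}\bigl(\begin{smallmatrix}0 & 1\\ 0 & 0\end{smallmatrix}\bigr)dz$ (upper triangular shapes forced by the DPW data), a direct computation yields
\begin{equation*}
  \alpha_{-1}(z) = \rho^2(z)\, a_{-1}(z) \begin{pmatrix} 0 & 1 \\ 0 & 0 \end{pmatrix}, \qquad \|\alpha_{-1}(z)\| = \rho^2(z)|a_{-1}(z)|.
\end{equation*}

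The crucial observation is that $\alpha$ is a Laurent \emph{polynomial} in $\la$ of the form $\alpha(z,\la) = \alpha_{-1}(z)\la^{-1} + \alpha_0(z) + \alpha_1(z)\la$. Indeed, since $F(z,\la)\in\SU_2$ for $\la\in\Aa_1$, one has $\alpha(z,\la)\in\su_2$ on $\Aa_1$, and the Fourier form of the skew-Hermiticity relation forces $\alpha_j^{*} = -\alpha_{-j}$ for every Laurent coefficient. Combined with $\alpha_{-j}=0$ for $j\geq 2$ (a consequence of $\xi$ having only a simple pole at $\la = 0$ and $B, B^{-1}$ being regular there), this gives $\alpha_j = 0$ for $j\geq 2$ and $\alpha_1 = -\alpha_{-1}^{*}$, so
\begin{equation*}
  \alpha_{-1}\la^{-1} + \alpha_1\la \;=\; \begin{pmatrix} 0 & \rho^2 a_{-1}\la^{-1} \\ -\rho^2\conj{a_{-1}}\la & 0 \end{pmatrix}.
\end{equation*}
The operator norm of this off-diagonal piece equals $\rho^2|a_{-1}|\max(|\la|,|\la|^{-1})$, which is at most $R\,\rho^2|a_{-1}|$ for $\la\in\Aa_R$. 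Consequently
\begin{equation*}
  \|\alpha(w,\la)\| \;\leq\; R\,\rho^2(w)|a_{-1}(w)| \;+\; \|\alpha_0(w)\| \qquad \forall\,\la\in\Aa_R.
\end{equation*}

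Fix $\la\in\Aa_R$. The ODE $dF = F\alpha$ integrated along $\gamma$ from $z_2$ to $z_1$ gives the standard Gr\"onwall-type estimate $\|F(z_1,\la)\| \leq \|F(z_2,\la)\|\exp\int_\gamma\|\alpha(w,\la)\||dw|$. Since the bound on $\|\alpha(w,\la)\|$ above is independent of $\la\in\Aa_R$, taking the sup over $\la$ on both sides yields
\begin{equation*}
  \norm{F(z_1,\la)}_{\Aa_R} \;\leq\; \norm{F(z_2,\la)}_{\Aa_R}\,\exp\!\Bigl( R\!\int_\gamma \rho^2|a_{-1}||dw| + \int_\gamma \|\alpha_0(w)\||dw|\Bigr).
\end{equation*}
Writing $R = (R-1)+1$ and absorbing the factor $\exp\!\bigl(\int_\gamma \rho^2|a_{-1}||dw| + \int_\gamma\|\alpha_0\||dw|\bigr)$ into the uniform constant $C$ produces the announced inequality. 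The main technical point is the second paragraph: without the polynomial structure of $\alpha$, one would a priori need to control an entire Laurent tail in $\la$, which would ruin both the $\la$-uniformity of the bound and the precise exponent $R-1$.
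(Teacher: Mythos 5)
Your derivation of the structure of $\alpha := F^{-1}dF$ is correct: it is a Laurent polynomial $\alpha_{-1}\la^{-1}+\alpha_0+\alpha_1\la$ with $\alpha_1=-\alpha_{-1}^*$ and $\alpha_{-1}=\rho^2 a_{-1}\left(\begin{smallmatrix}0&1\\0&0\end{smallmatrix}\right)dz$. But the final step --- writing $R=(R-1)+1$ and ``absorbing $\exp\!\bigl(\int_\gamma\rho^2|a_{-1}||dw|+\int_\gamma\|\alpha_0\||dw|\bigr)$ into a uniform constant $C$'' --- is not valid, and this is not a cosmetic issue. Those two integrals depend on $\gamma$, and in the only use the paper makes of this proposition (Section \ref{sectionconvergence}) the path $\gamma$ joins $z$ to $1$ with $z\to 0$, so $\int_\gamma\rho^2|a_{-1}||dw|$ (half the intrinsic length) diverges. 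A constant that swallows an unboundedly large exponential is not a uniform constant, and with it the resulting estimate $\norm{F(z)}\lesssim |z|^{-(1-\alpha)/4}$ that the paper needs (valid precisely because $R-1$ can be taken small) cannot be recovered. So the proof as written does not establish the stated inequality.

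The ingredient you are missing is the paper's comparison with the \emph{unitary restriction} $\widetilde{F}(z,\la):=F\bigl(z,\tfrac{\la}{|\la|}\bigr)$. Since $\widetilde F(z,\la)\in\SU_2$ for all $\la$, it is uniformly bounded, and it solves $d\widetilde F=\widetilde F\widetilde L$ with $\widetilde L(z,\la)=L\bigl(z,\tfrac{\la}{|\la|}\bigr)$. Variation of constants applied to $dF=FL$ relative to $d\widetilde F=\widetilde F\widetilde L$ produces a Gr\"onwall estimate whose exponent contains $\int_\gamma\|L-\widetilde L\|$, not $\int_\gamma\|L\|$. In $L-\widetilde L$ the whole $\la^0$ block cancels, and the $\la^{\pm1}$ blocks acquire the factor $|1-|\la||\le R-1$ on $\Aa_R$, giving $\|L-\widetilde L\|\le \widetilde C(R-1)\rho^2|a_{-1}|\,|dw|$. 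That is what isolates the factor $R-1$ in front of the integral and keeps $C$ genuinely uniform. Your direct Gr\"onwall bound on $\|\alpha\|$ cannot reproduce this because it never exploits the unitarity of $F$ on $|\la|=1$; it only uses the algebraic shape of $\alpha$, and the $\alpha_0$-term you discard is exactly the one the cancellation is designed to remove.

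If you want to repair the argument along your own lines, replace the last paragraph by: introduce $\widetilde F$ and $\widetilde\alpha(z,\la)=\alpha\bigl(z,\tfrac{\la}{|\la|}\bigr)$, note $\alpha-\widetilde\alpha=\alpha_{-1}\bigl(\la^{-1}-\tfrac{|\la|}{\la}\bigr)+\alpha_1\bigl(\la-\tfrac{\la}{|\la|}\bigr)$, bound each coefficient by $(R-1)\rho^2|a_{-1}|$ on $\Aa_R$, and run Gr\"onwall on $d\bigl(\widetilde F^{-1}F\bigr)$ rather than on $dF$ directly. This is precisely the paper's proof.
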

\begin{proof}
	Write
	\begin{equation*}
	\xi(z,\la) = \la^{-1}\begin{pmatrix}
	0 & a_{-1}(z)\\
	0 & 0
	\end{pmatrix} dz + \la^{0}\begin{pmatrix}
	c_0(z) & a_0(z)\\
	b_0(z) & -c_0(z)
	\end{pmatrix}dz + \Oo(\la).
	\end{equation*}
	Let $\Phi=FB$ be the Iwasawa decomposition of $\Phi$. Untwisting formula (4.3.5) of \cite{loopgroups} with the help of Remark 4.2.6 of \cite{loopgroups} gives $dF=FL$ where
	\begin{equation*}
	L(z,\la) = \begin{pmatrix}
	\rho^{-1}\rho_z & \la^{-1}\rho^2a_{-1}\\
	b_0\rho^{-2} & -\rho^{-1}\rho_z
	\end{pmatrix}dz + \begin{pmatrix}
	-\rho^{-1}\rho_{\bar{z}} & -\conj{b}_0\rho^{-2}\\
	-\la\rho^2\conj{a}_{-1} & \rho^{-1}\rho_{\bar{z}}
	\end{pmatrix}d\bar{z}.
	\end{equation*}
	Let
	\begin{equation*}
	\widetilde{F}(z,\la) = F\left( z,\frac{\la}{|\la|} \right)
	\end{equation*}
	so that $\widetilde{F}(z,\la)\in\SU_2$ for all $\la\in\Aa_R$. Then $d\widetilde{F} = \widetilde{F}\widetilde{L}$ where
	\begin{equation*}
	\widetilde{L}(z,\la) = L\left( z,\frac{\la}{|\la|} \right).
	\end{equation*}
	Using the variation of constants method, for all $z_1,z_2\in\Sigma$ (we ommit the variable $\la$),
	\begin{equation*}
	F(z_1) = F(z_2)\widetilde{F}(z_2)^{-1}\widetilde{F}(z_1) + \left( \int_{z_2}^{z_1} F(w)\left( L(w)-\widetilde{L}(w) \right) \widetilde{F}(w)^{-1} \right)\widetilde{F}(z_1).
	\end{equation*}
	But
	\begin{equation*}
	L(w,\la) - \widetilde{L}(w,\la) = \rho^2(w)\begin{pmatrix}
	0 & a_{-1}(w)\la^{-1}\left( 1-|\la| \right)dw\\
	-\conj{a}_{-1}(w)\la\left( 1-|\la|^{-1} \right)d\bar{w} & 0
	\end{pmatrix}
	\end{equation*}
	so there exists a uniform constant $\widetilde{C}$ such that
	\begin{equation*}
	\norm{L(w,\la) - \widetilde{L}(w,\la)}_{\Aa_R} \leq \widetilde{C} (R-1) \rho^2(w) |a_{-1}(w)| |dw|
	\end{equation*}
	and the result follows from Gronwall's inequality (Lemma 2.7 in \cite{teschl}) using the fact that $\widetilde{F}\in\SU_2$ for all $\la\in\Aa_R$.
\end{proof}

As an application, recall that in the untwisted $\Rr^3$ setting, if $f=\Sym(F)$, then $f$ is a CMC 1 conformal immersion whose metric is given by
\begin{equation*}
ds = 2\rho^2|a_{-1}||dz|.
\end{equation*}
So let $z_1,z_2\in\Sigma$ and $\gamma\subset\Sigma$ be a path joining $f(z_1)$ to $f(z_2)$. Then,
\begin{equation}\label{eqestimeeFannexe}
\norm{F(z_1,\la)}_{\Aa_R} \leq C \norm{F(z_2,\la)}_{\Aa_R} \exp\left( \frac{(R-1)}{2} |\gamma| \right).
\end{equation}

\pagebreak

%

\providecommand{\bysame}{\leavevmode\hbox to3em{\hrulefill}\thinspace}
\providecommand{\MR}{\relax\ifhmode\unskip\space\fi MR }
\providecommand{\MRhref}[2]{%
	\href{http://www.ams.org/mathscinet-getitem?mr=#1}{#2}
}
\providecommand{\href}[2]{#2}

\noindent
Thomas Raujouan\\
Institut Denis Poisson\\
Universit\'e de Tours, 37200 Tours, France\\
\verb$thomas.raujouan@univ-tours.fr$

\end{document}